\newcommand{\eps}{\varepsilon}
\newtheorem{theorem}{Theorem}
\newtheorem{lemma}[theorem]{Lemma}
\newtheorem{remark}[theorem]{Remark}
\theoremstyle{definition}
\DeclareMathOperator{\ext}{\mathcal{E}}
\newcommand{\dual}[2]{\langle#1\hspace*{.5mm},#2\rangle}
\newcommand{\ip}[2]{(#1\hspace*{.5mm},#2)}
\newcommand{\norm}[3][]{#1\|#2#1\|_{#3}}
\def\enorm#1{|\hspace*{-.5mm}|\hspace*{-.5mm}|#1|\hspace*{-.5mm}|\hspace*{-.5mm}|}
\def\pwnabla{\nabla_\TT}
\def\div{{\rm div\,}}
\def\pwdiv{ {\rm div}_{\TT}\,}
\def\pwlap{ {\Delta}_{\TT}\,}
\newcommand{\trace}{\gamma}
\newcommand{\ran}{\operatorname{ran}}
\newcommand{\set}[2]{\left\lbrace{#1} \,:\, {#2} \right\rbrace}
\newcommand{\R}{\ensuremath{\mathbb{R}}}
\newcommand{\N}{\ensuremath{\mathbb{N}}}
\newcommand{\HH}{\ensuremath{\boldsymbol{H}}}
\newcommand{\nn}{{\boldsymbol{n}}}
\newcommand{\vv}{\ensuremath{\boldsymbol{v}}}
\newcommand{\ww}{\ensuremath{\boldsymbol{w}}}
\newcommand{\TT}{\ensuremath{\mathcal{T}}}
\newcommand{\cS}{\ensuremath{\mathcal{S}}}
\newcommand{\el}{\ensuremath{T}}
\newcommand{\OO}{\ensuremath{\mathcal{O}}}
\newcommand{\ssigma}{{\boldsymbol\sigma}}
\newcommand{\ttau}{{\boldsymbol\tau}}
\newcommand{\llambda}{{\boldsymbol\lambda}}
\newcommand{\qq}{{\boldsymbol{q}}}
\newcommand{\uu}{\boldsymbol{u}}
\newcommand{\err}{\operatorname{err}}
\begin{document}

\title[DPG for Signorini problem]{On the DPG method for Signorini problems}
\date{\today}

\author{Thomas F\"uhrer}
\author{Norbert Heuer}
\address{Facultad de Matem\'{a}ticas,
Pontificia Universidad Cat\'{o}lica de Chile,
Vicku\~{n}a Mackenna 4860, Santiago, Chile}
\email{\{tofuhrer,nheuer\}@mat.uc.cl}

\author{Ernst P. Stephan}
\address{Institut f\"ur Angewandte Mathematik,
Leibniz Universit\"at Hannover,
Welfengarten 1, 30167 Hannover, Germany}
\email{stephan@ifam.uni-hannover.de}

\thanks{{\bf Acknowledgment.} 
Supported by CONICYT through FONDECYT projects 1150056 and 3150012}

\keywords{Contact problem, Signorini problem, variational inequality,
          DPG method, optimal test functions,
          ultra-weak formulation, reaction-dominated diffusion}
\subjclass[2010]{65N30, 
                 65N12, 
                 35J86, 
                 74M15} 
\begin{abstract}
We derive and analyze discontinuous Petrov-Galerkin methods with optimal test functions for Signorini-type
problems as a prototype of a variational inequality of the first kind. We present different symmetric and
non-symmetric formulations where optimal test functions are only used for the PDE part of the problem,
not the boundary conditions. For the symmetric case and lowest order approximations,
we provide a simple a posteriori error estimate. In a second part, we apply our technique to the
singularly perturbed case of reaction dominated diffusion.
Numerical results show the performance of our method and, in particular,
its robustness in the singularly perturbed case.
\end{abstract}
\maketitle


\section{Introduction}

In this paper we develop a framework to solve contact problems by the discontinuous Petrov-Galerkin method with optimal
test functions (DPG method). We consider a simplified model problem ($-\Delta u+u=f$ in a bounded domain)
with unilateral boundary conditions that resembles a scalar version of the Signorini problem in linear elasticity
\cite{Signorini_33_SAQ,Fichera_71_UCE}. We prove well-posedness of our formulation and quasi-optimal convergence.
We then illustrate how the scheme can be adapted to the singularly perturbed case of reaction-dominated diffusion
($-\eps\Delta u + u=f$). Specifically, we use the DPG setting from \cite{HeuerK_RDM} to design a method that controls
the field variables ($u$, $\nabla u$, and $\Delta u$) in the so-called \emph{balanced norm}
(which corresponds to $\|u\|+\eps^{1/4}\|\nabla u\|+\eps^{3/4}\|\Delta u\|$ with $L^2$-norm $\|\cdot\|$),
cf.~\cite{LinS_12_BFE}.
The balanced norm is stronger than the energy norm that stems from the Dirichlet bilinear form of the problem.

There is a long history of numerical analysis for contact problems, and more generally for variational
inequalities, cf. the books by Glowinski, Lions, Tr{\'e}moli{\`e}res~\cite{GlowinskiLT_81_NAV} and
Kikuchi, Oden~\cite{KikuchiO_88_CPE}. Some early papers on the (standard and mixed) finite element method
for Signorini-type problems are \cite{HlavacekL_77_FEM,BrezziHR_77_EEF,ScarpiniV_77_EEA,BrezziHR_78_EEF,Loppin_78_PCS}.
Unilateral boundary conditions usually give rise to limited regularity of the solution and authors
have made an effort to establish optimal convergence rates of the finite element method, see, e.g.,
\cite{ChoulyH_13_NMU,DrouetH_15_OCD}. 
Recently there has been some interest to extend other numerical schemes to unilateral contact problems,
e.g., least squares \cite{AttiaCS_09_FOS}, the local discontinuous Galerkin method \cite{BustinzaS_12_EEL}, and
a Nitsche-based finite element method \cite{ChoulyH_13_NMU}.

Our objective is to set up an appropriate framework to apply DPG technology to variational inequalities, in this
case Signorini-type problems. At this point we do not intend to be more competitive than previous schemes.
In particular, we are not concerned with the reduced regularity of solutions that limit convergence orders.
In any case, we show well-posedness of our scheme for the minimum regularity of $u$ in $H^1$ and right-hand side function
$f$ in $L^2$. Our primary focus is to use DPG schemes in such a way that their performance is not reduced
by the presence of Signorini boundary conditions. The DPG method aims at ensuring discrete inf-sup stability
by the choice of norms and test functions, cf.~\cite{DemkowiczG_11_ADM,DemkowiczG_11_CDP}.
This is particularly important for singularly perturbed problems where one can achieve robustness of
error control (the discrete inf-sup constant does not depend on perturbation parameters),
see~\cite{DemkowiczH_13_RDM,NiemiCC_13_ASD,ChanHBTD_14_RDM,BroersenS_14_RPG,HeuerK_RDM}.

Recently we have found a setting for the coupling of DPG and Galerkin boundary elements (BEM)
to solve transmission problems, see~\cite{FuehrerHK_CDB}. The principal idea is to take a variational
formulation of the interior problem (suitable for the DPG method) and to add transmission conditions
as a constraint. This constraint can be given as boundary integral equations in a least-squares or Galerkin form.
In this paper, we follow this very strategy. The partial differential equation is put in variational form
(without considering any boundary condition) and the Signorini conditions are added as constraint.
It turns out that the whole scheme can be written as a variational inequality of the first kind
where only the PDE part is tested with \emph{optimal test functions}, as is DPG strategy.
Then, proving coercivity and boundedness
of the bilinear form, the Lions-Stampacchia theorem proves well-posedness.

Let us note that there are finite element/boundary element coupling schemes for contact problems,
see~\cite{CarstensenG_97_FBC,MaischakS_05_FBC,GaticaMS_11_NAT}.
However, their coupling variants generalize the setting of contact problems on bounded domains.
In contrast, our coupling scheme \cite{FuehrerHK_CDB} (for a standard transmission problem)
separates the PDE on the bounded domain from the transmission conditions (and exterior problem)
in such a way that they can be formulated as a variational PDE plus constraint. As we have mentioned,
this is critical for combining a DPG method with transmission conditions as in \cite{FuehrerHK_CDB},
or with contact conditions as we show here.

The singularly perturbed case is more technical for two reasons. First, the DPG setting itself for the PDE
is more complicated (we use a robust formulation with three field variables)
and, second, the combination of PDE and Signorini conditions has to take into account the diffusion coefficient
as scaling parameter. In that way we apply what we have learnt from the DPG scheme \cite{HeuerK_RDM}
for reaction dominated diffusion, and from the DPG-BEM coupling \cite{FuehrerH_RCD}
for this case. To the best of our knowledge, this is the first mathematical analysis of a numerical
scheme for a singularly perturbed contact problem.

We will see that there are symmetric and non-symmetric forms to include the Signorini boundary conditions.
In the symmetric case we are able to provide a simple a posteriori error estimate that is based on the
DPG energy error. Let us also remark that we focus on ultra-weak variational formulations. This has the advantage
that both the trace and the flux appear as independent unknowns. This allows for a symmetric formulation of
the Signorini conditions, needed for our a posteriori error bound. Other variational formulations
can be considered analogously and they give rise to non-symmetric well-posed formulations. In those cases,
however, we have no a posteriori error analysis. Let us also mention that our scheme and
analysis of the scalar Signorini problem
can be extended to variational inequalities of the second kind, e.g., including Coulomb friction
and to linear elasticity so that, indeed, the Signorini problem can be solved by our DPG scheme. 

In the following we continue this introduction by presenting our model problem, by introducing an abstract
formulation as variational inequality of the first kind that is suitable for the DPG scheme,
and by giving a final overview of the remainder of this paper.

\subsection*{Model problem}
Let $\Omega\subset\R^d$ ($d\in\{2,3\}$) be a simply connected Lipschitz domain with
boundary $\Gamma=\partial\Omega$ and unit normal vector $\nn$ on $\Gamma$ pointing towards $\R^d\setminus\overline\Omega$.
For $f\in L^2(\Omega)$ we consider the model problem
\begin{subequations}\label{eq:intro:sigpde}
\begin{align}\label{eq:intro:signorini}
  -c\Delta u + u &= f \quad\text{in }\Omega
\end{align}
subject to the Signorini boundary conditions
\begin{align}\label{eq:intro:bc}
  u \geq 0, \quad \frac{\partial u}{\partial \nn}\geq 0, \quad u \frac{\partial u}{\partial \nn} = 0
  \quad\text{on}\quad\Gamma.
\end{align}
\end{subequations}
Initially we will study the case of constant $c=1$. Later we will illustrate the applicability of our
DPG scheme to the singularly perturbed problem with constant $c=\eps$, assuming $\eps$ to be a small positive
number.
Of course, the restriction of $u$ to $\Gamma$ is understood in the sense of the trace, and its normal derivative
on $\Gamma$ is defined by duality.

It is well known that this Signorini problem admits a unique solution
$u\in K := \{ v\in H^1(\Omega) \,:\, v \geq 0\ \text{on}\ \Gamma \}$, see, e.g.,~\cite{GlowinskiLT_81_NAV,Glowinski_08_NMN}.
Moreover, $u$ can be characterized as the unique solution of the variational inequality of the first kind
\begin{align}\label{eq:intro:weakform}
  a(u,v-u) \geq (f,v-u) \quad\text{for all } v\in K
\end{align}
with
\begin{align*}
  a(u,v) := \ip{\nabla u}{\nabla v} + \ip{u}v  \quad\text{for all }u,v\in H^1(\Omega).
\end{align*}
In fact, by choosing appropriate test functions $v\in K$ and integrating by parts, one finds
that problem~\eqref{eq:intro:weakform} is equivalent to~\eqref{eq:intro:signorini} and
\begin{align}\label{eq:intro:bcalt}
  \int_\Gamma \frac{\partial u}{\partial\nn} (v-u) \,d\Gamma \geq 0 \quad\text{for all } v\in K,
\end{align}
see, e.g.,~\cite{GlowinskiLT_81_NAV}.
The last relation is useful to establish a DPG setting of the variational inequality problem. 
Note that~\eqref{eq:intro:bc} implies that $\Gamma$ is partitioned into two parts $\Gamma_D$ and $\Gamma_N$
with $u|_{\Gamma_D} = 0$ and $\partial_\nn u|_{\Gamma_N} = 0$.
Hence, $u$ is the unique solution of~\eqref{eq:intro:signorini} with (mixed) boundary condition
$u|_{\Gamma_D} = 0$, and $\partial_\nn u|_{\Gamma_N} = 0$.
However, $\Gamma_D$ and $\Gamma_N$ are unknown in general and solving \eqref{eq:intro:sigpde} is equivalent
to finding these sets.

\subsection*{Variational formulation}

Let us give a brief overview of our variational setting for the DPG method.
In Section~\ref{sec:dpg} we will consider a non-standard variational form of \eqref{eq:intro:signorini}:
\begin{align}\label{eq:intro:dpg}
  \uu\in U:\quad b(\uu,\vv) = L(\vv) \quad\text{for all } \vv \in V.
\end{align}
Here, $U$ and $V$ are different Hilbert spaces and $b(\cdot,\cdot):\;U\times V\to\R$ is the bilinear form stemming
from, in our case, an ultra-weak formulation. At this point no boundary conditions are included so that there
is no unique solution to \eqref{eq:intro:dpg}.
Denoting by $\ip{\cdot}{\cdot}_V$ the inner product in $V$,
we define the \emph{trial-to-test operator} $\Theta_\beta:\;U\to V$ by
\begin{align}\label{eq:dpg:deftttop}
  \Theta_\beta := \beta\Theta \quad\text{with}\quad
  \ip{\Theta\uu}{\vv}_V := b(\uu,\vv) \quad\text{for all } \vv\in V.
\end{align}
The parameter $\beta>0$ has to be selected. Using this operator, the discretization of \eqref{eq:intro:dpg}
will be based on its equivalent variant with so-called \emph{optimal test functions}:
\begin{align}\label{eq:intro:dpg:theta}
  \uu\in U:\quad b(\uu,\Theta_\beta\vv) = L(\Theta_\beta\vv) \quad\text{for all } \vv \in U.
\end{align}
In our formulations, only one component of $\uu\in U$ corresponds to the original unknown $u$.
Depending on the particular case, we have to define appropriate Dirichlet and Neumann trace operators,
$\gamma_0$ and $\gamma_\nn$, acting on $U$. Then it is left to add
the boundary conditions \eqref{eq:intro:bc} in the form
$\gamma_0 \uu\geq 0$, $\gamma_\nn \uu \geq 0$, and $\gamma_0\uu \gamma_\nn \uu = 0$.
This transforms \eqref{eq:intro:dpg:theta} into a variational inequality.

Keeping in mind~\eqref{eq:intro:bcalt}, we define the bilinear form
\begin{align*}
  a^0(\uu,\vv) := 
  b(\uu,\Theta_\beta \vv) + \dual{\gamma_\nn \uu}{\gamma_0 \vv}_\Gamma \quad\text{for all } \uu,\vv \in U,
\end{align*}
and consider the following formulation:
Find $\uu\in K^0 := \{ \vv\in U \,:\, \gamma_0\vv \geq 0 \}$ such that
\begin{align*}
  a^0(\uu,\vv-\uu) \geq L(\Theta_\beta(\vv-\uu)) \quad\text{for all } \vv\in K^0.
\end{align*}
We will show that this problem is equivalent to~\eqref{eq:intro:sigpde}.
In particular, it has a unique solution.

An intrinsic feature of ultra-weak formulations is that all boundary conditions are essential.
Therefore, we can derive methods that use different convex sets. 
From~\eqref{eq:intro:bc} we infer that
\begin{align*}
  u\left(\frac{\partial v}{\partial \nn}- \frac{\partial u}{\partial \nn}\right) \geq 0
  \quad\text{on }\Gamma\quad \text{for all } v\in H^1(\Omega)
  \text{ with } \frac{\partial v}{\partial \nn}\geq 0 \text{ on }\Gamma,
\end{align*}
giving rise to the formulation: Find $\uu\in K^{\nn} := \set{\vv\in U}{\gamma_\nn\vv\geq 0}$ such that
\begin{align*}
  a^\nn(\uu,\vv-\uu) := b(\uu,\Theta_\beta(\vv-\uu)) + \dual{\gamma_\nn\vv-\gamma_\nn\uu}{\gamma_0\uu}_\Gamma \geq 
  L(\Theta_\beta(\vv-\uu)) \quad\text{for all } \vv\in K^\nn.
\end{align*}
Note that neither $a^0(\cdot,\cdot)$ nor $a^\nn(\cdot,\cdot)$ is symmetric. However,
a combination leads to a formulation with symmetric bilinear form:
Find $\uu\in K^s := \set{\vv\in U}{\gamma_0\vv\geq 0, \gamma_\nn\vv\geq 0}$ such that
\begin{align*}
  a^s(\uu,\vv-\uu) := \tfrac12(a^0(\uu,\vv-\uu)+a^\nn(\uu,\vv-\uu)) \geq L(\Theta_\beta(\vv-\uu)) \quad\text{for all }
  \vv\in K^s.
\end{align*}
For this symmetric case we will establish a simple a posteriori error estimator.

\subsection*{Overview}
The remainder of this paper is structured as follows. Section~\ref{sec:dpg} deals with the
unperturbed model problem \eqref{eq:intro:sigpde} (diffusion parameter $c=1$).
After introducing some notation in \S\ref{sec:dpg:notation},
in \S\ref{sec:dpg:lap} we present a variational inequality that represents \eqref{eq:intro:sigpde}
and is based on an ultra-weak variational formulation. Theorem~\ref{thm:dpg:main} then states
its well-posedness and equivalence. Afterwards, in \S\ref{sec:dpg:disc}, we present the
discrete DPG approximation, prove its well-posedness (Theorem~\ref{thm:dpg:disc}) and
quasi-optimal convergence (Theorem~\ref{thm:dpg:apriori}). An a posteriori error estimate
is derived in \S\ref{sec:aposteriori}. Some technical results are
collected in \S\ref{sec:dpg:tec} and a proof of Theorem~\ref{thm:dpg:main} is given
at the end of Section~\ref{sec:dpg}.

There is a similar structure in Section~\ref{sec:sp} for the singularly perturbed case
(problem \eqref{eq:intro:sigpde} with small diffusion $c=\eps$).
Notation is given in \S\ref{sec:sp:notation} and a variational formulation is presented
and analyzed in \S\ref{sec:sp:lap}. There, Theorem~\ref{thm:sp:main} states the well-posedness
and equivalence of the variational inequality. \S\ref{sec:sp:disc} presents and
analyzes the discrete scheme, its well-posedness (Theorem~\ref{thm:sp:disc}),
quasi-optimality (Theorem~\ref{thm:sp:apriori}) and an a posteriori error estimate
(Theorem~\ref{thm:sp:aposteriori}). Technical results and a proof of Theorem~\ref{thm:sp:main}
are presented in \S\ref{sec:sp:tec} and \S\ref{sec:sp:proof}, respectively.
Finally, in Section~\ref{sec:examples}, we present several numerical examples that underline
our theoretical results for the unperturbed and singularly perturbed cases.

Throughout the paper, $a\lesssim b$ means that $a\le kb$ with a generic constant $k>0$ that is independent of
involved parameters, functions and the underlying mesh. Similarly, we use the notation $a\simeq b$.

\section{DPG method} \label{sec:dpg}

\subsection{Notation}\label{sec:dpg:notation}

For Lipschitz domains $\omega\subset\R^d$ we use the standard Sobolev spaces
$L^2(\omega)$, $H^1(\omega)$, and $\HH(\div,\omega)$.
The $L^2(\omega)$ resp. $L^2(\partial\omega)$ scalar product is denoted by 
$\ip\cdot\cdot_\omega$ resp. $\dual\cdot\cdot_{\partial\omega}$ 
with induced norm $\norm\cdot{\omega}$ resp. $\norm\cdot{\partial\omega}$. 
Also, we define the trace spaces
\begin{align*}
  H^{1/2}(\partial\omega) := \left\{ \trace_\omega u\,:\, u\in H^1(\omega) \right\}
  \quad\text{ and its dual }\quad H^{-1/2}(\partial\omega) := \bigl(H^{1/2}(\partial\omega)\bigr)',
\end{align*}
where $\trace_\omega$ denotes the trace operator. 

Here, duality is understood with respect to $L^2(\partial\omega)$ as a pivot space,
i.e., using the extended $L^2(\partial\omega)$ inner product $\dual{\cdot}{\cdot}_{\partial\omega}$.
The $L^2(\Omega)$ inner product will be denoted by $\ip{\cdot}{\cdot}$ and the corresponding norm by
$\norm\cdot{}$.
Let $\TT$ denote a disjoint partition of $\Omega$ into open Lipschitz sets $\el\in\TT$,
i.e., $\bigcup_{\el\in\TT}\overline\el = \overline\Omega$.
The set of all boundaries of all elements forms the skeleton
$\cS := \left\{ \partial\el \mid \el\in\TT \right\}$.
By $\nn_M$ we mean the outer normal vector on $\partial M$ for a Lipschitz set $M$.
On a partition $\TT$ we use the product spaces 
\begin{align*}
  H^1(\TT) &:= \{ w\in L^2(\Omega) \,:\, w|_T \in H^1(T) \,\forall T\in\TT\}, \\
  \HH(\div,\TT) &:= \{ \qq\in (L^2(\Omega))^d \,:\, \qq|_T \in \HH(\div,T) \,\forall T\in\TT\}.
\end{align*}
The symbols $\pwnabla$, $\pwdiv$, resp. $\pwlap$ denote, the $\TT$-piecewise gradient, divergence, resp. Laplace operators. 
On the skeleton $\cS$ of $\TT$ we introduce the trace spaces
\begin{align*}
  H^{1/2}(\cS) &:=
  \Big\{ \widehat u \in \Pi_{\el\in\TT}H^{1/2}(\partial\el)\,:\,
         \exists w\in H^1(\Omega) \text{ such that } 
         \widehat u|_{\partial\el} = w|_{\partial\el}\; \forall \el\in\TT \Big\},\\
  H^{-1/2}(\cS) &:=
  \Big\{ \widehat\sigma \in \Pi_{\el\in\TT}H^{-1/2}(\partial\el)\,:\,
         \exists \qq\in\HH(\div,\Omega) \text{ such that } 
         \widehat\sigma|_{\partial\el} = (\qq\cdot\nn_{\el})|_{\partial\el}\; \forall\el\in\TT \Big\}.
\end{align*}
These spaces are equipped with norms depending on the problem, see \S\ref{sec:dpg:lap} and \S\ref{sec:sp:notation}.
For functions $\widehat u\in H^{1/2}(\cS)$, $\widehat\sigma\in H^{-1/2}(\cS)$
and $\ttau\in\HH(\div,\TT)$, $v\in H^1(\TT)$ we define
\begin{align*}
  \dual{\widehat u}{\ttau\cdot\nn}_\cS
  := \sum_{\el\in\TT}\dual{\widehat u|_{\partial\el}}{\ttau\cdot\nn_\el}_{\partial\el},\quad
  \dual{\widehat\sigma}{v}_\cS
  := \sum_{\el\in\TT}\dual{\widehat\sigma|_{\partial\el}}{v}_{\partial\el}.
\end{align*}
With the latter relations we can also define the restrictions
$\widehat u|_\Gamma \in H^{1/2}(\Gamma)$ and $\widehat \sigma|_\Gamma \in H^{-1/2}(\Gamma)$
of functions $\widehat u\in H^{1/2}(\cS)$, $\widehat\sigma \in H^{-1/2}(\cS)$ onto $\Gamma$. Let $w\in H^1(\Omega)$ be such that $w|_{\partial T} = \widehat u|_{\partial T}$.
Then, 
\begin{align} \label{trace}
  \dual{\widehat u}{\ttau\cdot\nn}_\cS &= \sum_{T\in\TT}\dual{w|_{\partial T}}{\ttau\cdot\nn_T}_{\partial T} = 
  \sum_{T\in\TT} \ip{\nabla w}{\ttau}_T + \ip{w}{\div \ttau}_T
  = \ip{\nabla w}{\ttau} + \ip{w}{\div \ttau} \nonumber\\
  &= \dual{w|_\Gamma}{\ttau\cdot\nn_\Omega}_\Gamma =: 
  \dual{\widehat u|_\Gamma}{\ttau\cdot\nn_\Omega}_\Gamma
  \quad\text{for all } 
  \ttau\in\HH(\div,\Omega).
\end{align}
Note that $\widehat u|_\Gamma$ is uniquely determined since the above relation is independent of the choice of the
extension $w\in H^1(\Omega)$ and since $H^{-1/2}(\Gamma)$ is the (normal) trace space of $\HH(\div,\Omega)$.
Similarly, we define $\widehat\sigma|_\Gamma \in H^{-1/2}(\Gamma)$ through
\begin{align} \label{ntrace}
  \dual{\widehat \sigma}{v}_\cS &= \sum_{T \in\TT}\dual{\ssigma\cdot\nn_{T}}{v|_\Gamma}_{\partial T} = 
  \sum_{T\in\TT} \ip{\div \ssigma}{v}_T + \ip{\ssigma}{\nabla v}_T
  = \ip{\div \ssigma}{v} + \ip{\ssigma}{\nabla v} \nonumber\\
  &= \dual{\ssigma\cdot\nn_\Omega}{v|_\Gamma}_\Gamma =:
  \dual{\widehat\sigma|_\Gamma}{v|_\Gamma}_\Gamma \quad\text{for all } v\in H^1(\Omega), 
\end{align}
where $\ssigma\in \HH(\div,\Omega)$ with $\ssigma\cdot\nn_{T}|_{\partial T} = \widehat\sigma|_{\partial T}$
for all $T\in\TT$.

\subsection{Ultra-weak variational formulation}\label{sec:dpg:lap}

We derive an ultra-weak formulation of~\eqref{eq:intro:signorini} with $c=1$.
Following~\cite{DemkowiczG_11_ADM}, we define $\ssigma = \nabla u$. Then,
\begin{align*}
  -\div\ssigma + u = f, \quad \ssigma -\nabla u = 0.
\end{align*}
We define $\widehat u$ and $\widehat\sigma$ such that
$\widehat u|_{\partial T} = u|_{\partial T}$ and
$\widehat\sigma|_{\partial T} = \nabla u\cdot\nn_T|_{\partial T}$ for all $T\in\TT$.
Testing the first-order system with functions $v\in H^1(\TT)$, $\ttau\in \HH(\div,\TT)$, and integrating by parts,
we end up with the ultra-weak formulation
\begin{subequations}\label{eq:dpg:varform}
\begin{align}
  \ip{\ssigma}{\pwnabla v} + \ip{u}v - \dual{\widehat\sigma}{v}_{\cS} &= \ip{f}v, \\
  \ip{\ssigma}{\ttau} + \ip{u}{\pwdiv\ttau} - \dual{\widehat u}{\ttau\cdot\nn}_{\cS} &= 0.
\end{align}
\end{subequations}
This gives rise to a bilinear form $b:U \times V \to \R$ and functional $L:V\to \R$ defined by
\begin{align*}
  b(\uu,\vv) &:=  \ip{\ssigma}{\pwnabla v} + \ip{u}v - \dual{\widehat\sigma}{v}_{\cS} 
  + \ip{\ssigma}{\ttau} + \ip{u}{\pwdiv\ttau} - \dual{\widehat u}{\ttau\cdot\nn}_{\cS}, \\
  L(\vv) &:= \ip{f}v
\end{align*}
for all $\uu = (u,\ssigma,\widehat u,\widehat\sigma) \in U$, $\vv = (v,\ttau) \in V$, where
\begin{align*}
  U &:= L^2(\Omega)\times [L^2(\Omega)]^d \times H^{1/2}(\cS) \times H^{-1/2}(\cS), \\
  V &:= H^1(\TT)\times \HH(\div,\TT).
\end{align*}
We equip these spaces with the norms
\begin{align*}
  \norm{\uu}U^2 &:= \norm{u}{}^2 + \norm{\ssigma}{}^2 + \norm{\widehat u}{1/2,\cS}^2 
  +\norm{\widehat\sigma}{-1/2,\cS}^2, \\
  \norm{\vv}V^2 &:= \norm{v}{}^2 + \norm{\pwnabla v}{}^2 + \norm{\ttau}{}^2 + \norm{\pwdiv\ttau}{}^2,
\end{align*}
where the norms for the trace variables are given by the minimum energy extensions to $H^1(\Omega)$ and
$\HH(\div,\Omega)$, respectively, i.e.,
\begin{align*}
  \norm{\widehat u}{1/2,\cS} &:=
  \inf \left\{ (\norm{w}{}^2 + \norm{\nabla w}{}^2)^{1/2} \,:\, w\in H^1(\Omega),
               \widehat u|_{\partial\el}=w|_{\partial\el}\; \forall\el\in\TT \right\},\\
  \norm{\widehat\sigma}{-1/2,\cS} &:=
  \inf \left\{ (\norm{\qq}{}^2\!+\!\norm{\div\qq}{}^2)^{1/2} \,:\, \qq\!\in\!\HH(\div,\Omega), 
               \widehat\sigma|_{\partial\el}\!=\!(\qq\cdot\nn_{\el})|_{\partial\el}\; \forall\el\!\in\!\TT \right\}.
\end{align*}
In $H^{1/2}(\Gamma)$ we use the norm
\begin{align*} 
  \norm{\widehat v}{H^{1/2}(\Gamma)} := 
  \inf \left\{ (\norm{w}{}^2 + \norm{\nabla w}{}^2)^{1/2} \,:\, w\in H^1(\Omega),\widehat v = w|_\Gamma \right\},
\end{align*}
and define the norm $\norm{\cdot}{H^{-1/2}(\Gamma)}$ by duality.

The bilinear form $b(\cdot,\cdot)$ induces a linear operator $B:\;U\to V'$ so that \eqref{eq:dpg:varform} can be
written as
\[
   \uu\in U:\quad B\uu = L.
\]
The operator $B$ has a non-trivial kernel.
In order to consider boundary conditions we define trace operators
\begin{align*}
  \begin{aligned}
    \gamma_0&:\;U\to H^{1/2}(\Gamma),&\quad&
    \gamma_0(u,\ssigma,\widehat u,\widehat\sigma):=\widehat u|_\Gamma
    &\qquad&\text{(Dirichlet trace, see~\eqref{trace}),} \\
    \gamma_\nn&:\;U\to H^{-1/2}(\Gamma),&\quad&
    \gamma_\nn(u,\ssigma,\widehat u,\widehat\sigma):=\widehat \sigma|_\Gamma
    &\qquad&\text{(Neumann trace, see~\eqref{ntrace})}.
  \end{aligned}
\end{align*}
and define the sets
\begin{align*}
  K^0 := \set{\uu\in U}{\gamma_0\uu \geq 0}, \quad K^\nn :=\set{\uu\in U}{\gamma_\nn\uu\geq 0}, 
  \quad K^s := \set{\uu\in U}{\gamma_0\uu\geq 0, \gamma_\nn\uu\geq 0}.
\end{align*}
The relations ``$\geq$'' are partial orderings. Following~\cite[Section~5]{KikuchiO_88_CPE} we define 
\begin{align*}
  \widehat v \geq 0 \text{ in } H^{1/2}(\Gamma) \quad :\Longleftrightarrow \quad
  \exists \{\widehat v_n\}_{n\in \N} \text{ s.t. } v_n \in \mathrm{Lip}(\Gamma) \text{ with }
  v_n\geq 0 \text{ and } v_n \rightharpoonup v \quad\text{in } H^{1/2}(\Gamma),
\end{align*}
where $\mathrm{Lip}(\Gamma)$ denotes all Lipschitz continuous functions $\Gamma \to \R$.
On $H^{-1/2}(\Gamma)$, ``$\geq$'' is understood as duality (see, e.g.,~\cite[Section~1.1.11]{HlavacekHNL_88_SVI}),
\begin{align*}
  \widehat\sigma \geq 0 \text{ in } H^{-1/2}(\Gamma) \quad:\Longleftrightarrow \quad
  \dual{\widehat\sigma}{\widehat v}_\Gamma \geq 0 \quad\text{for all } \widehat v\in H^{1/2}(\Gamma) 
  \text{ with } \widehat v\geq 0.
\end{align*}
One can verify that
$\set{\widehat v\in H^{1/2}(\Gamma)}{\widehat v\geq 0}$ and
$\set{\widehat\sigma\in H^{-1/2}(\Gamma)}{\widehat\sigma\geq 0}$ are closed, convex sets.
We will also see that $K^0$, $K^\nn$, and $K^s$ are non-empty, closed, convex subsets of $U$
(see Lemma~\ref{la_convex} below).

Now let $u\in H^1(\Omega)$ denote the solution of problem~\eqref{eq:intro:sigpde}
and define $\uu=(u,\ssigma,\widehat u,\widehat\sigma)\in U$ with components as above.
Then $B\uu=L$ and, considering inequality~\eqref{eq:intro:bcalt} as a representation of the
boundary conditions~\eqref{eq:intro:bc}, one sees that $\uu\in K^\star$ for $\star\in\{0,\nn,s\}$ and
\begin{align*}
  \dual{\gamma_\nn\uu}{\gamma_0\vv-\gamma_0\uu}_\Gamma &\geq 0 \quad\text{for all }\vv\in K^0, \\
  \dual{\gamma_\nn\vv-\gamma_\nn\uu}{\gamma_0\uu}_\Gamma &\geq 0 \quad\text{for all } \vv\in K^\nn, \\
  \tfrac12(\dual{\gamma_\nn\uu}{\gamma_0\vv-\gamma_0\uu}_\Gamma + 
  \dual{\gamma_\nn\vv-\gamma_\nn\uu}{\gamma_0\uu}_\Gamma) &\geq 0 \quad\text{for all } \vv\in K^s.
\end{align*}
Recalling the formulation \eqref{eq:intro:dpg:theta} of the problem $B\uu=L$,
this leads us to defining the bilinear forms $a^\star:U\times U\to \R$, $\star\in\{0,\nn,s\}$,
and the functional $F:U\to\R$ as follows.
\begin{align*}
  a^0(\uu,\vv) &:= b(\uu,\Theta_\beta\vv) + \dual{\gamma_\nn\uu}{\gamma_0\vv}_\Gamma, \\
  a^\nn(\uu,\vv) &:= b(\uu,\Theta_\beta\vv) + \dual{\gamma_\nn\vv}{\gamma_0\uu}_\Gamma, \\
  a^s(\uu,\vv) &:= b(\uu,\Theta_\beta\vv) 
  + \tfrac12(\dual{\gamma_\nn\uu}{\gamma_0\vv}_\Gamma + \dual{\gamma_\nn\vv}{\gamma_0\uu}_\Gamma), \\
  F(\vv) &:= L(\Theta_\beta\vv)
\end{align*}
for $\uu,\vv\in U$. Here, $\beta>0$ is a constant that will be fixed below.

We then obtain the following formulations of the model problem \eqref{eq:intro:sigpde}
(with $c=1$) as ultra-weak variational inequalities:
For fixed $\star\in\{0,\nn,s\}$ find $\uu\in K^\star$ such that
\begin{align}\label{eq:dpg:varineq}
  a^\star(\uu,\vv-\uu) \geq F(\vv-\uu) \quad\text{for all } \vv\in K^\star.
\end{align}
These are variational inequalities of the first kind and we use a standard framework
for their analysis, cf.~\cite{GlowinskiLT_81_NAV,Glowinski_08_NMN}.

The following is one of our main results.

\begin{theorem}\label{thm:dpg:main}
  Fix $\star\in\{0,\nn,s\}$.
  For all $\beta\geq 2$ there holds the following:
  The bilinear form $a^\star:U\times U \to \R$ is $U$-coercive,
  \begin{align*} 
    \norm{\uu}{U}^2 \leq C_1 a^\star(\uu,\uu) \quad\text{for all } \uu\in U,
  \end{align*}
  and bounded,
  \begin{align*}
    |a^\star(\uu,\vv)| \leq (C_2^2 \beta+1) \norm{\uu}U\norm{\vv}U \quad\text{for all }\uu,\vv \in U.
  \end{align*}
  The constant $C_1>0$ depends only on $\Omega$ and $C_2>0$ is the continuity constant of $b:\;U\times V \to \R$.

  In particular, the variational inequality~\eqref{eq:dpg:varineq} is uniquely solvable and equivalent to
  problem~\eqref{eq:intro:sigpde} with $c=1$ in the following sense:
  If $u\in H^1(\Omega)$ solves problem~\eqref{eq:intro:sigpde},
  then $\uu = (u,\ssigma,\widehat u,\widehat\sigma)\in K^\star$ 
  with $\ssigma := \nabla u$, $\widehat u|_{\partial T} := u|_{\partial T}$,
  $\widehat\sigma|_{\partial T} := \nabla u\cdot\nn_T|_{\partial T}$
  for all $T\in\TT$ solves~\eqref{eq:dpg:varineq}. On the other hand,
  if $\uu = (u,\ssigma,\widehat u,\widehat\sigma)\in K^\star$ solves~\eqref{eq:dpg:varineq}, then $u\in H^1(\Omega)$
  solves~\eqref{eq:intro:sigpde}.

  Moreover, the unique solution $\uu\in K^\star$ of~\eqref{eq:dpg:varineq} satisfies
  \begin{align}\label{eq:dpg:exactsol}
    b(\uu,\Theta_\beta \ww) = F(\ww) \quad\text{for all } \ww\in U.
  \end{align}
\end{theorem}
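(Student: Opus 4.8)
The plan is to derive coercivity and boundedness of $a^\star$ from the corresponding properties of $b(\cdot,\cdot)$ together with the definition of the trial-to-test operator, and then to invoke the Lions--Stampacchia theorem for the well-posedness of the variational inequality. The key algebraic identity is that, by the definition \eqref{eq:dpg:deftttop} of $\Theta$, one has $b(\uu,\Theta_\beta\vv)=\beta\,b(\uu,\Theta\vv)=\beta\ip{\Theta\uu}{\Theta\vv}_V$, so the symmetric part $b(\uu,\Theta_\beta\uu)=\beta\norm{\Theta\uu}{V}^2=\beta\norm{B\uu}{V'}^2$. I expect the technical results collected in \S\ref{sec:dpg:tec} to supply precisely what is needed here: a bounded-below estimate $\norm{B\uu}{V'}\gtrsim \norm{\uu}{U}$ modulo the kernel of $B$, or rather an estimate of the form $\norm{\uu}{U}^2\lesssim \norm{B\uu}{V'}^2 + \text{(boundary terms)}$, which is the usual way these DPG formulations are shown to be stable once the trace variables are controlled. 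Assuming such a lemma, the boundary pairing $\dual{\gamma_\nn\uu}{\gamma_0\uu}_\Gamma$ that appears in $a^0(\uu,\uu)$, $a^\nn(\uu,\uu)$, and $a^s(\uu,\uu)$ must be handled; here one uses the integration-by-parts identities \eqref{trace}, \eqref{ntrace} to re-express $\dual{\gamma_\nn\uu}{\gamma_0\uu}_\Gamma$ in terms of the field components $u,\ssigma$ and the bilinear form $b$, and then absorbs it using Young's inequality. The requirement $\beta\ge 2$ is exactly what makes this absorption work: a factor of $\beta$ sits in front of $\norm{B\uu}{V'}^2$ and one needs enough of it to dominate the indefinite boundary contribution.

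Concretely, for coercivity I would proceed as follows. First, write $a^\star(\uu,\uu)=\beta\norm{B\uu}{V'}^2 + (\text{boundary term})$, where the boundary term is $\dual{\gamma_\nn\uu}{\gamma_0\uu}_\Gamma$ in all three cases $\star\in\{0,\nn,s\}$ (for $\star=s$ the two halves coincide; for $\star=0,\nn$ it is literally this term). Next, observe that when $B\uu$ is small in $V'$, the pair $(u,\ssigma,\widehat u,\widehat\sigma)$ is close to satisfying the ultra-weak equations, and in particular $\widehat\sigma|_\Gamma$ is close to being the normal trace of $\ssigma$ and $\widehat u|_\Gamma$ close to the trace of $u$; one then rewrites $\dual{\gamma_\nn\uu}{\gamma_0\uu}_\Gamma$ using \eqref{ntrace} (take $v$ an extension of $\widehat u$) as $\ip{\div\ssigma}{w}+\ip{\ssigma}{\nabla w}$ plus an error controlled by $\norm{B\uu}{V'}$, and the leading part is $\gtrsim 0$ or at worst bounded by $\norm{\uu}{U}\norm{B\uu}{V'}$. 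Combining with the kernel-free lower bound $\norm{\uu}{U}\lesssim \norm{B\uu}{V'}$ (valid because adding the boundary conditions removes the kernel) and applying Young's inequality with the $\beta\ge 2$ slack gives $\norm{\uu}{U}^2\lesssim a^\star(\uu,\uu)$ with $C_1$ depending only on $\Omega$. Boundedness is the easy direction: $|b(\uu,\Theta_\beta\vv)|=\beta|\ip{\Theta\uu}{\Theta\vv}_V|\le\beta\norm{\Theta\uu}{V}\norm{\Theta\vv}{V}\le\beta C_2^2\norm{\uu}{U}\norm{\vv}{U}$ since $\norm{\Theta\uu}{V}=\norm{B\uu}{V'}\le C_2\norm{\uu}{U}$, and the boundary pairings are bounded by $\norm{\gamma_\nn\uu}{H^{-1/2}(\Gamma)}\norm{\gamma_0\vv}{H^{1/2}(\Gamma)}\le\norm{\uu}{U}\norm{\vv}{U}$ by trace theorems built into the minimum-energy norms; adding these yields the stated constant $C_2^2\beta+1$.

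With coercivity and boundedness in hand, the Lions--Stampacchia theorem (applicable since $K^\star$ is a non-empty closed convex subset of the Hilbert space $U$ by Lemma~\ref{la_convex}, and $F$ is a bounded linear functional because $\norm{F}{U'}\le\norm{L}{V'}\sup\norm{\Theta_\beta\vv}{V}/\norm{\vv}{U}\lesssim\beta\norm{f}{}$) gives existence and uniqueness of $\uu\in K^\star$ solving \eqref{eq:dpg:varineq}. For the equivalence with \eqref{eq:intro:sigpde}: in one direction, if $u$ solves the Signorini problem then $B\uu=L$ and \eqref{eq:intro:bcalt} translates into the three displayed boundary inequalities preceding the theorem statement, so $\uu\in K^\star$ solves \eqref{eq:dpg:varineq}; in the other direction, the solution $\uu$ of \eqref{eq:dpg:varineq} satisfies \eqref{eq:dpg:exactsol} — this is seen by testing with $\vv=\uu\pm\ww$ for arbitrary $\ww\in U$ with $\gamma_0\ww=0$ and $\gamma_\nn\ww=0$ (so $\vv\in K^\star$), which forces $b(\uu,\Theta_\beta\ww)=F(\ww)$ on that subspace, and then extending to all of $U$ using that the boundary terms vanish there and a density/decomposition argument that every $\ww\in U$ splits as a kernel-admissible piece plus one with prescribed traces. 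From $b(\uu,\Theta_\beta\ww)=F(\ww)$ for all $\ww$, i.e.\ $\beta\ip{\Theta\uu}{\Theta\ww}_V=\beta\ip{L}{\Theta\ww}$ — equivalently $\Theta\uu$ is the Riesz representative of $L$ restricted to $\ran\Theta$ — one recovers $B\uu=L$ on $\ran B$, hence the ultra-weak equations \eqref{eq:dpg:varform}, hence (by the standard DPG argument reversing the integration by parts) $-\Delta u+u=f$ with $\ssigma=\nabla u$ and single-valued traces; feeding this back into \eqref{eq:dpg:varineq} recovers \eqref{eq:intro:bcalt}, so $u$ solves the Signorini problem. \emph{The main obstacle} I anticipate is the coercivity estimate: pinning down the kernel-free lower bound $\norm{\uu}{U}\lesssim\norm{B\uu}{V'}$ once boundary conditions are imposed, and carefully tracking that the indefinite boundary term $\dual{\gamma_\nn\uu}{\gamma_0\uu}_\Gamma$ can be absorbed with constants depending only on $\Omega$ and not on the mesh — this is where the hypothesis $\beta\ge2$ is consumed and where the technical lemmas of \S\ref{sec:dpg:tec} do the real work.
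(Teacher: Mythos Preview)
Your boundedness argument is correct and matches the paper. The overall structure (Lions--Stampacchia, equivalence via construction) is also right. The genuine gap is in the coercivity argument.

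You write that one can combine with ``the kernel-free lower bound $\norm{\uu}{U}\lesssim \norm{B\uu}{V'}$ (valid because adding the boundary conditions removes the kernel)''. This is false: coercivity of $a^\star$ must hold for \emph{all} $\uu\in U$, and for $\uu\in\ker(B)\setminus\{0\}$ one has $\norm{B\uu}{V'}=0$ while $\norm{\uu}{U}>0$. There is no subspace restriction in the statement. More fundamentally, you treat the boundary term $\dual{\gamma_\nn\uu}{\gamma_0\uu}_\Gamma$ as a nuisance to be absorbed (``at worst bounded by $\norm{\uu}{U}\norm{B\uu}{V'}$''), but for general $\uu\in U$ this pairing is indefinite and can be arbitrarily negative relative to $\norm{B\uu}{V'}^2$. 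It is precisely this term that must supply control of the kernel direction; you cannot get coercivity from $\norm{B\uu}{V'}^2$ alone.

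The paper's mechanism is the reverse of what you outline. Using the quasi-harmonic extension $\ext$ one first writes $\norm{\uu}{U}^2\lesssim \norm{B\uu}{V'}^2+\norm{\gamma_0\uu}{H^{1/2}(\Gamma)}^2$ (Lemma~\ref{lem:kerBext}). The second term is then expressed exactly as $\norm{\gamma_0\uu}{H^{1/2}(\Gamma)}^2=\dual{\gamma_\nn\ext\gamma_0\uu}{\gamma_0\uu}_\Gamma$, and compared to $\dual{\gamma_\nn\uu}{\gamma_0\uu}_\Gamma$ via the estimate $\norm{\gamma_\nn(\uu-\ext\gamma_0\uu)}{H^{-1/2}(\Gamma)}\le\sqrt{2}\norm{B\uu}{V'}$ (Lemma~\ref{lem:dpg:Hm12estimate}). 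Young's inequality then gives $\tfrac12\norm{\gamma_0\uu}{H^{1/2}(\Gamma)}^2\le \norm{B\uu}{V'}^2+\dual{\gamma_\nn\uu}{\gamma_0\uu}_\Gamma$, and the factor $2$ in front of $\norm{B\uu}{V'}^2$ is exactly why $\beta\ge 2$ is needed. Your rewriting of $\dual{\gamma_\nn\uu}{\gamma_0\uu}_\Gamma$ as ``$\ip{\div\ssigma}{w}+\ip{\ssigma}{\nabla w}$ plus error'' is also incorrect as stated: $\ssigma$ is merely an $L^2$ field, not an $\HH(\div,\Omega)$ extension of $\widehat\sigma$, so \eqref{ntrace} does not apply to it.

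For \eqref{eq:dpg:exactsol}, your decomposition idea is close but imprecise. The paper tests with $\vv=\uu\pm(\ww-\ext\gamma_0\ww)$, which has $\gamma_0(\vv-\uu)=0$ and hence $\vv\in K^0$; the point is then that $\ext\gamma_0\ww\in\ker(B)=\ker(\Theta)$, so $\Theta_\beta(\ww-\ext\gamma_0\ww)=\Theta_\beta\ww$ and both sides of \eqref{eq:dpg:exactsol} are unchanged by removing the kernel piece. Your version requires $\gamma_0\ww=\gamma_\nn\ww=0$ and then a vague ``density/decomposition'' extension; the clean observation that the kernel piece lies in $\ker\Theta$ is what makes the extension immediate.
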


This theorem is proved in Section~\ref{sec:dpg:proof}.

\subsection{Discretization and convergence}\label{sec:dpg:disc}

To discretize our variational inequality \eqref{eq:dpg:varineq} we use, in this work,
lowest-order piecewise polynomial functions.
That is, we replace the space $U$ by
\begin{align*}
  U_h := P^0(\TT)\times [P^0(\TT)]^d \times S^1(\cS) \times P^0(\cS)
\end{align*}
where $P^0$ denotes the space of element-wise constants on $\TT$ resp. $\cS$, and $S^1(\cS)$ is
the space of globally continuous, element-wise affine functions on $\cS$.
Defining the non-empty convex subsets
\begin{align*}
  K_h^0 &:= \set{\vv_h\in U_h}{\gamma_0\vv_h\geq 0}, \\
  K_h^\nn &:= \set{\vv_h\in U_h}{\gamma_\nn\vv_h\geq 0}, \\
  K_h^s &:= \set{\vv_h\in U_h}{\gamma_0\vv_h\geq 0, \gamma_\nn\vv_h\geq 0}
\end{align*}
we find that $K_h^\star\subseteq K^\star$. 

The discretized version of~\eqref{eq:dpg:varineq} then reads: For fixed $\star\in\{0,\nn,s\}$ 
find $\uu_h\in K_h^\star$ such that
\begin{align}\label{eq:dpg:varineqdisc}
  a^\star(\uu_h,\vv_h-\uu_h)\geq F(\vv_h-\uu_h) \quad\text{for all } \vv_h\in K_h^\star.
\end{align}
Coercivity and boundedness of $a^\star(\cdot,\cdot)$ hold on the full space $U$ (Theorem~\ref{thm:dpg:main}).
Therefore, the Lions-Stampacchia theorem applies also in the discrete case.

\begin{theorem} \label{thm:dpg:disc}
Under the assumptions of Theorem~\ref{thm:dpg:main}, 
the discrete variational inequality~\eqref{eq:dpg:varineqdisc} admits a unique solution $\uu_h\in K_h^\star$.
\end{theorem}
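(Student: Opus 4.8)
The plan is simply to verify that the hypotheses of the Lions--Stampacchia theorem (see, e.g., \cite{GlowinskiLT_81_NAV,Glowinski_08_NMN}) are met by the discrete problem and then to invoke it; no new analytic work is needed beyond what is already contained in Theorem~\ref{thm:dpg:main}.

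First I would observe that $U_h$ is a finite-dimensional, hence closed, subspace of the Hilbert space $U$, and that, equipped with the restriction of $\norm{\cdot}{U}$, it is itself a Hilbert space. Next, the convex set $K_h^\star$ is nothing but $K^\star\cap U_h$. By Lemma~\ref{la_convex} the set $K^\star$ is non-empty, closed and convex, and $U_h$ is a closed subspace, so $K_h^\star$ is closed and convex; moreover $\mathbf 0\in K_h^\star$ (indeed $\gamma_0\mathbf 0=0\ge 0$ and $\gamma_\nn\mathbf 0=0\ge 0$), hence $K_h^\star\neq\emptyset$. By Theorem~\ref{thm:dpg:main}, for $\beta\ge 2$ the bilinear form $a^\star:U\times U\to\R$ is $U$-coercive and bounded; its restriction to $U_h\times U_h$ therefore inherits both properties, with the very same constants $C_1$ and $C_2^2\beta+1$. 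Finally, $\vv_h\mapsto F(\vv_h)=L(\Theta_\beta\vv_h)$ is a bounded linear functional on $U_h$. Applying the Lions--Stampacchia theorem to the Hilbert space $U_h$, the closed convex set $K_h^\star$, the coercive and bounded form $a^\star$, and the functional $F$ then yields a unique $\uu_h\in K_h^\star$ satisfying \eqref{eq:dpg:varineqdisc}.

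There is no genuine obstacle here: coercivity and boundedness were deliberately established on the full space $U$ in Theorem~\ref{thm:dpg:main} precisely so that they pass to every closed convex subset, and the conforming choice $K_h^\star\subseteq K^\star$ makes the discrete set automatically admissible. The only point deserving a line of care is the non-emptiness and closedness of $K_h^\star$, which, as noted above, is immediate from the corresponding properties of $K^\star$ together with the fact that $U_h$ is a closed subspace.
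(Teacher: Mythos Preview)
Your proposal is correct and follows exactly the paper's approach: the paper simply notes that coercivity and boundedness of $a^\star(\cdot,\cdot)$ hold on all of $U$ by Theorem~\ref{thm:dpg:main}, so the Lions--Stampacchia theorem applies in the discrete case as well. Your additional remarks on non-emptiness, closedness and convexity of $K_h^\star$ only make the (one-line) argument more explicit.
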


Our scheme converges quasi-optimally in the following sense.

\begin{theorem}\label{thm:dpg:apriori}
  For $\beta\ge 2$ let $\uu\in K^\star$, $\uu_h\in K_h^\star$ denote the exact solutions
  of~\eqref{eq:dpg:varineq}, \eqref{eq:dpg:varineqdisc}. Then there holds
  \begin{align*} 
    \norm{\uu-\uu_h}U^2 \lesssim 
    \begin{cases}
      \inf_{\vv_h\in K_h^0} \left( \norm{\uu-\vv_h}U^2 +\dual{\gamma_\nn\uu}{\gamma_0(\vv_h-\uu)}_\Gamma\right)
      & (\star = 0), \\
      \inf_{\vv_h\in K_h^\nn} \left( \norm{\uu-\vv_h}U^2 
      +\dual{\gamma_\nn(\vv_h-\uu)}{\gamma_0\uu}_\Gamma\right)  & (\star = \nn), \\
      \inf_{\vv_h\in K_h^s} \left( \norm{\uu-\vv_h}U^2 
      + \tfrac12(\dual{\gamma_\nn\uu}{\gamma_0(\vv_h-\uu)}_\Gamma 
      + \dual{\gamma_\nn(\vv_h-\uu)}{\gamma_0\uu}_\Gamma)\right)
      & (\star = s).
    \end{cases}
  \end{align*}
  The generic constant depends on $\Omega$ and $\beta$ but not on $\TT$.
\end{theorem}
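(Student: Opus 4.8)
The plan is to run a Falk-type / C\'ea-type argument, using crucially that the discretization is conforming, $K_h^\star\subseteq K^\star$, so that the discrete solution $\uu_h$ is admissible in the continuous variational inequality and no extra consistency term arises. I treat $\star=0$; the cases $\star=\nn$ and $\star=s$ are handled identically after renaming the boundary term. First I would invoke $U$-coercivity from Theorem~\ref{thm:dpg:main} to get $\norm{\uu-\uu_h}U^2\le C_1\,a^\star(\uu-\uu_h,\uu-\uu_h)$, and then for an arbitrary $\vv_h\in K_h^\star$ split $\uu-\uu_h=(\uu-\vv_h)+(\vv_h-\uu_h)$ and use bilinearity in the first slot:
\begin{align*}
  a^\star(\uu-\uu_h,\uu-\uu_h)
  = a^\star(\uu-\uu_h,\uu-\vv_h) + a^\star(\uu,\vv_h-\uu_h) - a^\star(\uu_h,\vv_h-\uu_h).
\end{align*}
By boundedness (Theorem~\ref{thm:dpg:main}) the first term is at most $(C_2^2\beta+1)\,\norm{\uu-\uu_h}U\,\norm{\uu-\vv_h}U$, and since $\vv_h\in K_h^\star$ the discrete variational inequality \eqref{eq:dpg:varineqdisc} gives $-a^\star(\uu_h,\vv_h-\uu_h)\le -F(\vv_h-\uu_h)$, so that it remains to estimate $a^\star(\uu,\ww)-F(\ww)$ with $\ww:=\vv_h-\uu_h\in U$.

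The key step is to evaluate this difference. By the definition of $a^\star$ and $F$, and the identity \eqref{eq:dpg:exactsol} for the exact solution (which cancels $b(\uu,\Theta_\beta\ww)=L(\Theta_\beta\ww)=F(\ww)$), the difference collapses to a pure boundary term: for $\star=0$ it equals $\dual{\gamma_\nn\uu}{\gamma_0(\vv_h-\uu_h)}_\Gamma$, for $\star=\nn$ it equals $\dual{\gamma_\nn(\vv_h-\uu_h)}{\gamma_0\uu}_\Gamma$, and for $\star=s$ it is the arithmetic mean of the two. Writing $\vv_h-\uu_h=(\vv_h-\uu)+(\uu-\uu_h)$ and invoking the continuous variational inequality \eqref{eq:dpg:varineq} with the admissible competitor $\uu_h\in K_h^\star\subseteq K^\star$ (for $\star=0$: $\dual{\gamma_\nn\uu}{\gamma_0\uu_h-\gamma_0\uu}_\Gamma\ge0$, hence $\dual{\gamma_\nn\uu}{\gamma_0(\uu-\uu_h)}_\Gamma\le0$; similarly for $\star=\nn,s$), the contribution of $\uu-\uu_h$ has a sign that lets us drop it, leaving only $\dual{\gamma_\nn\uu}{\gamma_0(\vv_h-\uu)}_\Gamma$ in the case $\star=0$, respectively its $\nn$- and $s$-analogues.

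Collecting everything, $\norm{\uu-\uu_h}U^2\le C_1\big[(C_2^2\beta+1)\norm{\uu-\uu_h}U\norm{\uu-\vv_h}U + (\text{boundary term in }\vv_h-\uu)\big]$; a Young inequality absorbs $\norm{\uu-\uu_h}U^2$ into the left-hand side, and taking the infimum over $\vv_h\in K_h^\star$ gives the three asserted bounds, with a constant depending only on $\Omega$, $\beta$, $C_2$ (and hence on $\Omega$ and $\beta$), not on $\TT$. I expect the only delicate point to be the sign bookkeeping in the previous paragraph: one must simultaneously use the discrete variational inequality for $\uu_h$, the identity \eqref{eq:dpg:exactsol} for $\uu$, and the continuous variational inequality tested against $\uu_h$, and verify in each of the three cases that the "$\uu-\uu_h$" portion of the boundary term drops out. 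Note that conformity $K_h^\star\subseteq K^\star$ is what makes this work; in a non-conforming setting an additional term $\inf_{\vv\in K^\star}(\dots)$ measuring the approximation of $K^\star$ by $K_h^\star$ would appear.
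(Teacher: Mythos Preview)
Your argument is correct and follows the same Falk-type route as the paper: coercivity, the split $\uu-\uu_h=(\uu-\vv_h)+(\vv_h-\uu_h)$, boundedness on the first piece, the discrete inequality plus \eqref{eq:dpg:exactsol} on the second, and Young's inequality to finish. The one small difference is in how the ``$\uu-\uu_h$'' portion of the boundary term is dropped: you invoke the continuous variational inequality \eqref{eq:dpg:varineq} tested against $\uu_h\in K_h^\star\subseteq K^\star$ (which, after cancelling via \eqref{eq:dpg:exactsol}, yields exactly $\dual{\gamma_\nn\uu}{\gamma_0(\uu_h-\uu)}_\Gamma\ge0$ for $\star=0$), whereas the paper argues pointwise from the complementarity $\gamma_\nn\uu\ge0$, $\gamma_0\uu_h\ge0$, $\gamma_0\uu\,\gamma_\nn\uu=0$ to reach the same inequality. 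The two justifications are equivalent here, so your proof is essentially the paper's.
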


\begin{proof}
By Theorem~\ref{thm:dpg:main}, $a^\star(\cdot,\cdot)$ is $U$-coercive and bounded. Therefore we can
follow Falk's lemma~\cite{Falk_74_EEA} to deduce that
\begin{align*}
  \norm{\uu-\uu_h}{U}^2 &\lesssim a^\star(\uu-\uu_h,\uu-\uu_h) = a^\star(\uu-\uu_h,\uu-\vv_h) 
  + a^\star(\uu-\uu_h,\vv_h-\uu_h) \\
  & \leq \norm{\uu-\uu_h}U (C_2^2\beta+1) \norm{\uu-\vv_h}U + a^\star(\uu-\uu_h,\vv_h-\uu_h)
\end{align*}
for all $\vv_h\in K_h^\star\subseteq K^\star$.
We consider only the case $\star = 0$. The remaining cases are treated in the same manner.
To tackle the last term on the right-hand side we use~\eqref{eq:dpg:exactsol} and~\eqref{eq:dpg:varineqdisc} to see that
\begin{align*}
  a^0(\uu-\uu_h,\vv_h-\uu_h) &= b(\uu,\Theta_\beta (\vv_h-\uu_h) ) + \dual{\gamma_\nn\uu}{\gamma_0(\vv_h-\uu_h)}_\Gamma 
  - a^0(\uu_h,\vv_h-\uu_h) \\
  &= \dual{\gamma_\nn\uu}{\gamma_0(\vv_h-\uu_h)}_\Gamma + F(\vv_h-\uu_h)-a^0(\uu_h,\vv_h-\uu_h) \\
  & \leq \dual{\gamma_\nn\uu}{\gamma_0(\vv_h-\uu_h)}_\Gamma.
\end{align*}
Note that the exact solution $\uu$ satisfies $\gamma_\nn \uu \geq 0$ and $\gamma_0\uu \gamma_\nn\uu = 0$. For the
discrete one there holds $\gamma_0\uu_h\geq 0$.
Hence, the last term further simplifies to
\begin{align*}
  \dual{\gamma_\nn\uu}{\gamma_0(\vv_h-\uu_h)}_\Gamma \leq \dual{\gamma_\nn\uu}{\gamma_0 \vv_h}_\Gamma
  = \dual{\gamma_\nn\uu}{\gamma_0(\vv_h-\uu)}_\Gamma.
\end{align*}
Altogether, Young's inequality with some parameter $\delta>0$ shows that
\begin{align*}
  \norm{\uu-\uu_h}{U}^2\lesssim \frac{\delta}2 \norm{\uu-\uu_h}{U}^2 
  + \frac{\delta^{-1}}2 (C_2^2\beta+1)^2\norm{\uu-\vv_h}{U}^2 
  + \dual{\gamma_\nn\uu}{\gamma_0(\vv_h-\uu)}_\Gamma
\end{align*}
for arbitrary $\vv_h\in K_h^0$. This proves the a priori estimate.
\end{proof}

\begin{remark}\label{rem:approx}
  To deduce convergence rates assume, for instance, that $u\in H^3(\Omega)$ is the solution
      of~\eqref{eq:intro:sigpde}. Set
  \begin{align*}
    \vv_h := (\Pi_h^0 u,\boldsymbol{\Pi}_h^0\nabla u,I_h u|_\cS, (\Pi_h^{\div} \nabla u\cdot\nn_T|_{\partial T})_{T\in\TT})
    \in U,
  \end{align*}
  where $\Pi_h^0$ and $\boldsymbol{\Pi}_h^0$ are the $L^2(\Omega)$-orthogonal projections onto the element-wise
  constant spaces, $I_h$ is the nodal interpolant, and $\Pi_h^{\div}$ is the (lowest-order) Raviart-Thomas projector.
  Note that $I_h$ preserves non-negativity (in particular, on the boundary) and
  the normal trace of $\Pi_h^{\div}$ is the $L^2(\Gamma)$-orthogonal projection $\pi_h^0$ and, hence, preserves
  non-negativity on the boundary as well. 
  Thus, $\vv_h \in K_h^\star$ for $\star\in\{0,\nn,s\}$. We refer to~\cite{DemkowiczG_11_ADM} to see that
  $\norm{\uu-\vv_h}U = O(h)$.
  To control the boundary terms we note that
  \begin{align*}
    |\dual{\gamma_\nn\uu}{\gamma_0\vv_h-\gamma_0\uu}_\Gamma| \leq \norm{\gamma_\nn\uu}{\Gamma}
    \norm{(u-I_h u)|_\Gamma}{\Gamma} = \OO(h^2).
  \end{align*}
  Let $\Gamma_1,\dots,\Gamma_L \subseteq \Gamma$ be such that $\bigcup \overline\Gamma_j = \Gamma$ 
  and $\nn_\Omega|_{\Gamma_j}$ is constant. Using the projection
  property we obtain
  \begin{align*}
    &|\dual{\gamma_\nn\vv_h-\gamma_\nn\uu}{\gamma_0\uu}_\Gamma| \leq 
    \sum_{E\in\cS_\Gamma} |\dual{\gamma_\nn\vv_h-\gamma_\nn\uu}{\gamma_0\uu}_E|
    = \sum_{E\in\cS_\Gamma} |\dual{\gamma_\nn\vv_h-\gamma_\nn\uu}{\gamma_0\uu-\pi_h^0\gamma_0\uu}_E| \\
    &\qquad \lesssim \sum_{E\in\cS_\Gamma} |E| \norm{\gamma_\nn\uu}{H^1(E)} |E| \norm{\gamma_0\uu}{H^1(E)}
    \lesssim h^2 \left(\sum_{j=1}^L \norm{\gamma_\nn\uu}{H^1(\Gamma_j)}^2\right)^{1/2} \norm{\gamma_0\uu}{H^1(\Gamma)}.
  \end{align*}
  Altogether, using $\vv_h$ in Theorem~\ref{thm:dpg:apriori} we infer
  \begin{align*}
    \norm{\uu-\uu_h}U^2 &\lesssim \norm{\uu-\vv_h}U^2 + 
      \begin{cases}
      |\dual{\gamma_\nn\uu}{\gamma_0(\vv_h-\uu)}_\Gamma|
      & (\star = 0) \\
      |\dual{\gamma_\nn(\vv_h-\uu)}{\gamma_0\uu}_\Gamma|  & (\star = \nn) \\
      \tfrac12 |\dual{\gamma_\nn\uu}{\gamma_0(\vv_h-\uu)}_\Gamma 
      + \dual{\gamma_\nn(\vv_h-\uu)}{\gamma_0\uu}_\Gamma|
      & (\star = s)
    \end{cases}
    \\&= O(h^2).
  \end{align*}
    With less regularity of $u$ the treatment of the boundary terms becomes more technical. We refer
    to~\cite{DrouetH_15_OCD} for details.
\end{remark}

\begin{remark}
  Our analysis also allows for non-conforming discrete cones $K_h^\star\not\subseteq K^\star$.
  Then, an additional consistency error shows up in Theorem~\ref{thm:dpg:apriori}, that is, in the case $\star = 0$,
  \begin{align*}
    \norm{\uu-\uu_h}U^2 \lesssim \inf_{\vv_h\in K_h^0} \left( \norm{\uu-\vv_h}{U}^2 +
    \dual{\gamma_\nn\uu}{\gamma_0\vv_h-\gamma_0\uu}_\Gamma \right) + \inf_{\vv\in K^0}
    \dual{\gamma_\nn\uu}{\gamma_0\vv-\gamma_0\uu_h}_\Gamma.
  \end{align*}
  Again, this a priori estimate can be derived by using Falk's lemma.
\end{remark}

\subsection{A posteriori error estimate} \label{sec:aposteriori}

We derive a simple error estimator in the symmetric case, i.e., $\star=s$.
Throughout this section we assume that $\beta>0$ is a fixed constant such that $a^s(\cdot,\cdot)$ is coercive
(Theorem~\ref{thm:dpg:main}).

Let $\cS_\Gamma := \set{\partial T \cap \Gamma}{T\in\TT}$ denote the mesh on the boundary which is
induced by the volume mesh $\TT$. Furthermore let $\uu_h\in K_h^s$ denote the unique solution of~\eqref{eq:dpg:varineqdisc}.
We define for all $T\in\TT$ local volume error indicators by
\begin{align} \label{eq:aposteriori:estdef}
  \eta(T)^2 := \beta \norm{R_T^{-1}\iota_T^*(L-B\uu_h)}{V(T)}^2.
\end{align}
Here, $V(T) := H^1(T) \times \HH(\div,T)$, $\norm{\cdot}{V(T)}$ denotes the canonical norm on $V(T)$,
$R_T:\;V(T) \to (V(T))'$ is the Riesz isomorphism,
and $\iota_T^*$ is the dual of the canonical embedding $\iota_T:\; V(T)\to V$.
Moreover, for all $E\in\cS_\Gamma$ we define local boundary indicators by
\begin{align*}
  \eta(E)^2 := \dual{\gamma_\nn\uu_h}{\gamma_0\uu_h}_E.
\end{align*}
Note that $\uu_h\in K_h^s$ implies that $\eta(E)^2\geq 0$. 
The overall estimator is then given by
\begin{align*}
  \eta^2 := \sum_{T\in\TT} \eta(T)^2 + \sum_{E\in\cS_\Gamma} \eta(E)^2.
\end{align*}

\begin{theorem}\label{thm:dpg:aposteriori}
  For $\beta\ge 2$
  let $\uu\in K^s$ and $\uu_h\in K_h^s$ be the solutions of~\eqref{eq:dpg:varineq} and~\eqref{eq:dpg:varineqdisc},
  respectively. Then, there holds the reliability estimate
  \begin{align*} 
    \norm{\uu-\uu_h}U \leq C_\mathrm{rel} \eta
  \end{align*}
  with a constant $C_\mathrm{rel}>0$ that depends on $\Omega$ but not on $\TT$ or $\beta$.
\end{theorem}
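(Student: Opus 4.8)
The plan is to bound $\norm{\uu-\uu_h}{U}$ by combining coercivity of $a^s(\cdot,\cdot)$ with the variational inequalities satisfied by $\uu$ and $\uu_h$, and then to recognize the resulting residual quantities as the estimator $\eta$. First I would use Theorem~\ref{thm:dpg:main} to write $\norm{\uu-\uu_h}{U}^2 \lesssim a^s(\uu-\uu_h,\uu-\uu_h)$. Next, I expand this using $a^s(\uu-\uu_h,\uu-\uu_h) = a^s(\uu,\uu-\uu_h) - a^s(\uu_h,\uu-\uu_h)$. For the first term, since $\uu\in K^s$ solves~\eqref{eq:dpg:varineq}, testing with $\vv=\uu_h\in K_h^s\subseteq K^s$ gives $a^s(\uu,\uu_h-\uu)\geq F(\uu_h-\uu)$, i.e.\ $a^s(\uu,\uu-\uu_h)\leq F(\uu-\uu_h)$. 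Combining with~\eqref{eq:dpg:exactsol}, which gives $b(\uu,\Theta_\beta\ww)=F(\ww)$ for all $\ww\in U$, one finds that $a^s(\uu,\uu-\uu_h)$ reduces (after the $b$-part cancels against $F$) to boundary terms involving only the \emph{exact} solution, which vanish by $\gamma_0\uu\,\gamma_\nn\uu=0$ and the sign conditions $\gamma_0\uu\geq0$, $\gamma_\nn\uu\geq0$ tested against $\uu_h\in K^s$. So the first term contributes $\leq F(\uu-\uu_h) - b(\uu,\Theta_\beta(\uu-\uu_h)) + (\text{nonpositive boundary terms}) \leq 0$ modulo the residual, or more precisely leaves only $F(\uu-\uu_h)-b(\uu,\Theta_\beta(\uu-\uu_h))=0$.

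The second term, $-a^s(\uu_h,\uu-\uu_h)$, is where the estimator enters. Write $a^s(\uu_h,\uu-\uu_h) = b(\uu_h,\Theta_\beta(\uu-\uu_h)) + \tfrac12(\dual{\gamma_\nn\uu_h}{\gamma_0(\uu-\uu_h)}_\Gamma + \dual{\gamma_\nn(\uu-\uu_h)}{\gamma_0\uu_h}_\Gamma)$. Using the discrete variational inequality~\eqref{eq:dpg:varineqdisc} is not directly available since $\uu\notin K_h^s$; instead I would estimate the volume part $F(\uu-\uu_h)-b(\uu_h,\Theta_\beta(\uu-\uu_h)) = \dual{L-B\uu_h}{\Theta_\beta(\uu-\uu_h)}$ directly. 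By definition of $\Theta_\beta$ and the standard DPG residual identity (localizing the Riesz representation over elements $T\in\TT$ since $V=\prod_T V(T)$), this equals $\beta\sum_{T}\ip{R_T^{-1}\iota_T^*(L-B\uu_h)}{\Theta(\uu-\uu_h)|_T}_{V(T)}$, and Cauchy--Schwarz together with $\norm{\Theta\ww}{V}\le C_2\norm{\ww}{U}$ bounds this by $\bigl(\sum_T\eta(T)^2\bigr)^{1/2}\,C_2\norm{\uu-\uu_h}{U}$. For the boundary part, the two duality terms evaluated at $\uu-\uu_h$ split as a term at the \emph{exact} solution (which is nonnegative when subtracted, using $\gamma_0\uu\geq0$, $\gamma_\nn\uu\geq0$, and the complementarity) minus the term $\dual{\gamma_\nn\uu_h}{\gamma_0\uu_h}_E$ summed over $E\in\cS_\Gamma$, which is exactly $\sum_E\eta(E)^2$.

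Assembling these pieces yields $\norm{\uu-\uu_h}{U}^2 \lesssim C_2\bigl(\sum_T\eta(T)^2\bigr)^{1/2}\norm{\uu-\uu_h}{U} + \sum_E\eta(E)^2$, and Young's inequality absorbs the first term into the left-hand side, giving $\norm{\uu-\uu_h}{U}^2 \lesssim \sum_T\eta(T)^2 + \sum_E\eta(E)^2 = \eta^2$, with the implied constant depending on $C_1$, $C_2$ and $\Omega$ but not on $\TT$ or $\beta$ (the $\beta$'s cancel because $\eta(T)^2$ carries the factor $\beta$ while the residual pairing also does). The main obstacle I anticipate is the careful bookkeeping of signs in the boundary terms: one must show that the mixed duality pairings at the exact solution, when tested against the discrete solution and subtracted appropriately, produce only the nonnegative indicator contributions $\dual{\gamma_\nn\uu_h}{\gamma_0\uu_h}_E\ge0$ plus terms that vanish or have the correct sign, using that $\uu\in K^s$ satisfies both $\gamma_0\uu\ge0$, $\gamma_\nn\uu\ge0$ and the complementarity $\gamma_0\uu\,\gamma_\nn\uu=0$ pointwise on $\Gamma$, together with $\uu_h\in K_h^s$. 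A secondary technical point is justifying the element-wise localization of the Riesz representation of the residual, which is routine given the product structure of $V$ but should be stated explicitly.
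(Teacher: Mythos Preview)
Your overall strategy—coercivity followed by bounding the residual and boundary contributions—is sound, and your sign bookkeeping for the boundary terms is correct. However, the paper's proof is considerably more direct, and your route as written does \emph{not} deliver the claimed $\beta$-independence of $C_{\mathrm{rel}}$.

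The paper never splits $a^s(\uu-\uu_h,\uu-\uu_h)$ into $a^s(\uu,\cdot)-a^s(\uu_h,\cdot)$, never invokes the variational inequality, and never uses Cauchy--Schwarz. It simply expands
\[
  a^s(\uu-\uu_h,\uu-\uu_h)=b(\uu-\uu_h,\Theta_\beta(\uu-\uu_h))+\dual{\gamma_\nn(\uu-\uu_h)}{\gamma_0(\uu-\uu_h)}_\Gamma
\]
and uses the DPG energy identity $b(\ww,\Theta_\beta\ww)=\beta\norm{B\ww}{V'}^2$ together with $B\uu=L$ to obtain the first term \emph{equal to} $\beta\norm{L-B\uu_h}{V'}^2=\sum_T\eta(T)^2$. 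The boundary term is expanded into four pairings; complementarity kills $\dual{\gamma_\nn\uu}{\gamma_0\uu}_\Gamma$, the two mixed terms are nonnegative and come with a minus sign, and what remains is $\dual{\gamma_\nn\uu_h}{\gamma_0\uu_h}_\Gamma=\sum_E\eta(E)^2$. That is the whole proof.

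Your residual step goes wrong at the $\beta$-accounting. From $\dual{L-B\uu_h}{\Theta_\beta(\uu-\uu_h)}$ you apply Cauchy--Schwarz and the bound $\norm{\Theta\ww}{V}\le C_2\norm{\ww}{U}$. But $\sum_T\eta(T)^2=\beta\norm{L-B\uu_h}{V'}^2$, so only one factor of $\sqrt{\beta}$ is absorbed into $(\sum_T\eta(T)^2)^{1/2}$; a stray $\sqrt{\beta}$ (and the continuity constant $C_2$) survives, and after Young's inequality the resulting reliability constant depends on $\beta$. Your parenthetical claim that ``the $\beta$'s cancel'' is therefore incorrect as stated. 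The fix is to avoid Cauchy--Schwarz altogether: since $\Theta(\uu-\uu_h)=R^{-1}B(\uu-\uu_h)=R^{-1}(L-B\uu_h)$, one has the \emph{identity}
\[
  \dual{L-B\uu_h}{\Theta_\beta(\uu-\uu_h)}=\beta\norm{L-B\uu_h}{V'}^2=\sum_{T\in\TT}\eta(T)^2,
\]
which is exactly what the paper uses (and which makes your detour through $a^s(\uu,\cdot)-a^s(\uu_h,\cdot)$ and the variational inequality unnecessary).
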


\begin{proof}
  By the $U$-coercivity of $a^s(\cdot,\cdot)$ (see Theorem~\ref{thm:dpg:main}) we have
  \begin{align*}
    \norm{\uu-\uu_h}U^2 &\lesssim a^s(\uu-\uu_h,\uu-\uu_h) 
    = b(\uu-\uu_h,\Theta_\beta(\uu-\uu_h) ) + \dual{\gamma_\nn(\uu-\uu_h)}{\gamma_0(\uu-\uu_h)}_\Gamma
    \\ &=\beta \norm{B(\uu-\uu_h)}{V'}^2 + \dual{\gamma_\nn(\uu-\uu_h)}{\gamma_0(\uu-\uu_h)}_\Gamma.
  \end{align*}
  Note that $\uu\in K^s$ and $\uu_h\in K_h^s$ implies that $\dual{\gamma_\nn\uu}{\gamma_0\uu_h}_\Gamma\geq 0$ and
  $\dual{\gamma_\nn\uu_h}{\gamma_0\uu}_\Gamma\geq 0$.
  Together with $B\uu = L$ and $\gamma_0\uu\gamma_\nn\uu = 0$ we obtain the estimate
  \begin{align*}
    \norm{\uu-\uu_h}U^2 \lesssim 
    \beta \norm{L-B\uu_h}{V'}^2 + \dual{\gamma_\nn \uu_h}{\gamma_0\uu_h}_\Gamma
    = \sum_{T\in\TT} \eta(T)^2 + \sum_{E\in\cS_\Gamma} \eta(E)^2,
  \end{align*}
  which finishes the proof.
\end{proof}

\subsection{Technical details} \label{sec:dpg:tec}

We start by proving boundedness of our trace operators which are specifically modified for the space $U$.

\begin{lemma}\label{lem:trace}
  $\gamma_0 : (U,\norm\cdot{U})\to (H^{1/2}(\Gamma),\norm\cdot{H^{1/2}(\Gamma)})$ and 
  $\gamma_\nn : (U,\norm\cdot{U})\to (H^{-1/2}(\Gamma),\norm\cdot{H^{-1/2}(\Gamma)})$ are bounded with constant $1$.
\end{lemma}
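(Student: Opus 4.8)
The plan is to work directly from the definitions of the two trace operators and the minimum-energy-extension norms, exploiting that $\norm{\widehat u}{1/2,\cS}$ and $\norm{\widehat\sigma}{-1/2,\cS}$ are infima over \emph{all} extensions to $H^1(\Omega)$ resp.\ $\HH(\div,\Omega)$, while the target norms on $H^{1/2}(\Gamma)$ and $H^{-1/2}(\Gamma)$ are infima over extensions matching only the trace on $\Gamma$. Since any extension realizing (or nearly realizing) the skeleton norm is in particular an admissible competitor for the boundary norm, the boundary infimum can only be smaller. This is the entire mechanism; the constant $1$ comes out for free.

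\medskip

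First I would treat $\gamma_0$. Let $\uu=(u,\ssigma,\widehat u,\widehat\sigma)\in U$ be arbitrary and fix $\eps>0$. By definition of $\norm{\widehat u}{1/2,\cS}$ there is $w\in H^1(\Omega)$ with $w|_{\partial\el}=\widehat u|_{\partial\el}$ for all $\el\in\TT$ and $(\norm{w}{}^2+\norm{\nabla w}{}^2)^{1/2}\le \norm{\widehat u}{1/2,\cS}+\eps$. By~\eqref{trace}, the restriction $\gamma_0\uu=\widehat u|_\Gamma$ equals $w|_\Gamma$, so $w$ is an admissible competitor in the definition of $\norm{\cdot}{H^{1/2}(\Gamma)}$, whence
\begin{align*}
  \norm{\gamma_0\uu}{H^{1/2}(\Gamma)} \le (\norm{w}{}^2+\norm{\nabla w}{}^2)^{1/2} \le \norm{\widehat u}{1/2,\cS}+\eps \le \norm{\uu}{U}+\eps.
\end{align*}
Letting $\eps\to 0$ gives boundedness with constant $1$.

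\medskip

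Next $\gamma_\nn$. Here I would use the dual characterization $\norm{\widehat\sigma|_\Gamma}{H^{-1/2}(\Gamma)}=\sup_{0\ne\widehat v\in H^{1/2}(\Gamma)}\dual{\widehat\sigma|_\Gamma}{\widehat v}_\Gamma/\norm{\widehat v}{H^{1/2}(\Gamma)}$. Fix such a $\widehat v$ and $\eps>0$; pick $v\in H^1(\Omega)$ with $v|_\Gamma=\widehat v$ and $(\norm{v}{}^2+\norm{\nabla v}{}^2)^{1/2}\le\norm{\widehat v}{H^{1/2}(\Gamma)}+\eps$. Choose $\ssigma'\in\HH(\div,\Omega)$ with $\ssigma'\cdot\nn_T|_{\partial T}=\widehat\sigma|_{\partial T}$ for all $T$ and $(\norm{\ssigma'}{}^2+\norm{\div\ssigma'}{}^2)^{1/2}\le\norm{\widehat\sigma}{-1/2,\cS}+\eps$. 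Then by~\eqref{ntrace} and Cauchy--Schwarz,
\begin{align*}
  \dual{\gamma_\nn\uu}{\widehat v}_\Gamma = \dual{\ssigma'\cdot\nn_\Omega}{v|_\Gamma}_\Gamma = \ip{\div\ssigma'}{v}+\ip{\ssigma'}{\nabla v} \le (\norm{\ssigma'}{}^2+\norm{\div\ssigma'}{}^2)^{1/2}(\norm{v}{}^2+\norm{\nabla v}{}^2)^{1/2},
\end{align*}
which is at most $(\norm{\widehat\sigma}{-1/2,\cS}+\eps)(\norm{\widehat v}{H^{1/2}(\Gamma)}+\eps)$. Dividing by $\norm{\widehat v}{H^{1/2}(\Gamma)}$, taking the supremum over $\widehat v$, and sending $\eps\to 0$ yields $\norm{\gamma_\nn\uu}{H^{-1/2}(\Gamma)}\le\norm{\widehat\sigma}{-1/2,\cS}\le\norm{\uu}{U}$.

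\medskip

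I do not expect any serious obstacle: the only point needing minimal care is that the identities~\eqref{trace} and~\eqref{ntrace} are exactly what certifies that the chosen $w$ (resp.\ pairing of $v$ and $\ssigma'$) is legitimate, i.e.\ that $\gamma_0\uu$ and $\gamma_\nn\uu$ are well defined independently of the extension and genuinely agree with the boundary traces of those representatives. Once that is in hand, both bounds are immediate from the nesting of the admissible extension sets.
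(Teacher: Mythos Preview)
Your proof is correct and follows the same approach as the paper's: both parts rest on the observation that any admissible extension for the skeleton norm is in particular admissible for the boundary norm, so the infimum defining the latter can only be smaller. The paper's proof is terser (it simply asserts $\norm{\cdot|_\Gamma}{H^{-1/2}(\Gamma)}\le\norm{\cdot}{-1/2,\cS}$ for the Neumann part), whereas you spell out the duality-and-Cauchy--Schwarz argument that justifies this inequality; the content is the same.
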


\begin{proof}
Boundedness of these operators follows basically by definition of the corresponding norms,
see also~\cite[Lemma~3]{FuehrerHK_CDB}. Specifically, the definitions of the norms
$\norm\cdot{H^{1/2}(\Gamma)}$, $\norm\cdot{1/2,\cS}$, and $\norm\cdot{U}$ prove boundedness of $\gamma_0$.
Note that $\norm{\cdot|_\Gamma}{H^{-1/2}(\Gamma)} \leq \norm{\cdot}{-1/2,\cS}$ and, thus, the definition of
$\norm\cdot{U}$ shows boundedness of $\gamma_\nn$.
\end{proof}

By the boundedness of the operators $\gamma_0$ and $\gamma_\nn$ we immediately establish the following result.

\begin{lemma} \label{la_convex}
  The sets $K^\star$ ($\star\in\{0,\nn,s\}$) are non-empty, closed, convex subsets of $U$.
\end{lemma}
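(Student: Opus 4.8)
The plan is to verify the three defining properties—non-emptiness, closedness, and convexity—for each of the sets $K^0$, $K^\nn$, and $K^s$, relying on the structural properties already established. Since $K^s = K^0 \cap K^\nn$, it suffices to handle $K^0$ and $K^\nn$ individually and then intersect; recall that an intersection of non-empty, closed, convex sets containing a common point is again non-empty, closed, and convex.

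First I would address non-emptiness. The zero element $\uu = 0 = (0,0,0,0) \in U$ satisfies $\gamma_0 \uu = 0 \geq 0$ and $\gamma_\nn \uu = 0 \geq 0$ (the zero element of $H^{1/2}(\Gamma)$, resp.\ $H^{-1/2}(\Gamma)$, is trivially non-negative: it is the weak limit of the constant sequence $0\in\mathrm{Lip}(\Gamma)$, resp.\ it pairs to $0\geq 0$ with every non-negative test function). Hence $0 \in K^0 \cap K^\nn = K^s$, and all three sets are non-empty. For convexity, I would use that $\gamma_0$ and $\gamma_\nn$ are linear, together with the fact—noted already in the excerpt—that the cones $\{\widehat v\in H^{1/2}(\Gamma): \widehat v\geq 0\}$ and $\{\widehat\sigma\in H^{-1/2}(\Gamma): \widehat\sigma\geq 0\}$ are convex. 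Given $\uu,\vv\in K^0$ and $t\in[0,1]$, linearity gives $\gamma_0(t\uu+(1-t)\vv) = t\gamma_0\uu + (1-t)\gamma_0\vv$, which is a convex combination of two non-negative elements, hence non-negative; so $t\uu+(1-t)\vv\in K^0$. The argument for $K^\nn$ is identical with $\gamma_\nn$ in place of $\gamma_0$, and $K^s$ inherits convexity as an intersection.

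For closedness, I would argue that $K^0 = \gamma_0^{-1}\bigl(\{\widehat v\in H^{1/2}(\Gamma): \widehat v\geq 0\}\bigr)$ is the preimage under the bounded—hence continuous—linear operator $\gamma_0$ (Lemma~\ref{lem:trace}) of a closed set, and is therefore closed in $U$. Likewise $K^\nn = \gamma_\nn^{-1}\bigl(\{\widehat\sigma\in H^{-1/2}(\Gamma): \widehat\sigma\geq 0\}\bigr)$ is closed by continuity of $\gamma_\nn$, and $K^s = K^0\cap K^\nn$ is closed as a finite intersection of closed sets. The main point requiring care—and the only place where the ``one can verify'' claim from the excerpt is actually invoked—is the closedness of the two order cones on $\Gamma$; the $H^{-1/2}(\Gamma)$ cone is closed because it is an intersection of half-spaces $\{\widehat\sigma : \dual{\widehat\sigma}{\widehat v}_\Gamma\geq 0\}$ over non-negative $\widehat v$, each weakly (hence norm-) closed, while the $H^{1/2}(\Gamma)$ cone requires the sequential-weak-closure definition to be reconciled with norm-closedness, which is where I expect the only genuine (though standard, cf.\ \cite{KikuchiO_88_CPE}) work to lie. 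Everything else reduces to continuity and linearity of the trace maps.
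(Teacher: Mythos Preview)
Your proposal is correct and follows the same approach as the paper, which simply states that the lemma follows immediately from the boundedness of $\gamma_0$ and $\gamma_\nn$ established in Lemma~\ref{lem:trace}. You have merely spelled out in detail what the paper leaves implicit: non-emptiness via the zero element, convexity via linearity of the trace maps and convexity of the order cones, and closedness as the preimage of closed cones under continuous linear maps.
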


The following steps are to characterize the kernel of the operator $B$. This kernel is non-trivial
since $B$ does not include any boundary condition. Our procedure is similar to the one in~\cite{FuehrerHK_CDB}.

For a function $v\in H^{1/2}(\Gamma)$, we define its quasi-harmonic extension $\widetilde u\in H^1(\Omega)$ as the
unique solution of
\begin{align}\label{eq:dpg:harmext}
  -\Delta \widetilde u + \widetilde u &=0 \quad\text{in }\Omega, \qquad
  \widetilde u|_\Gamma = v.
\end{align}
Note that the infimum in the definition of $\norm{v}{H^{1/2}(\Gamma)}$ is attained for the function $\widetilde u$,
i.e., $\norm{\widetilde u}{}^2 + \norm{\nabla\widetilde u}{}^2 = \norm{v}{H^{1/2}(\Gamma)}^2$.
Then, define the operator $\ext : H^{1/2}(\Gamma)\to U$ by
\begin{align*}
  \ext v := (\widetilde u,\ssigma,\widehat u,\widehat\sigma), \text{ where }
  \ssigma := \nabla\widetilde u, \quad \widehat u|_{\partial T} := \widetilde u|_{\partial T}, 
  \quad \widehat\sigma|_{\partial T} := \nabla\widetilde u\cdot\nn_T|_{\partial T} \quad\text{for all } T\in\TT.
\end{align*}
The range of this operator is the kernel of $B$.
We combine important properties of $B$ and $\ext$ in the following lemma.

\begin{lemma}\label{lem:kerBext}
  The operators $B : U \to V'$, $\ext: H^{1/2}(\Gamma)\to U$ have the following properties:
  \begin{enumerate}[(i)]
    \item\label{lem:kerBext:a} $B$ and $\ext$ are bounded. Specifically there holds
      \begin{align*}
        &\norm{B\uu}{V'} \lesssim \norm{\uu}{U} &&\hspace{-7em}\text{for all }\uu\in U,\\
        &\norm{v}{H^{1/2}(\Gamma)} \le \norm{\ext v}{U} \le \sqrt{3} \norm{v}{H^{1/2}(\Gamma)}
                                                &&\hspace{-7em}\text{for all }v \in H^{1/2}(\Gamma).
      \end{align*}
    \item\label{lem:kerBext:b} The kernel of $B$ consists of all quasi-harmonic extensions, i.e., $\ker(B) = \ran(\ext)$.
    \item\label{lem:kerBext:c} $\ext$ is a right-inverse of $\gamma_0$.
    \item\label{lem:kerBext:d} $B:\;U/\ker(B)\to V'$ is inf-sup stable,
      \begin{align*}
        \norm{\uu-\ext\gamma_0\uu}{U} \lesssim \norm{B\uu}{V'} = \sup_{\vv\in V} \frac{\dual{B\uu}{\vv}}{\norm{\vv}{V}}
        = b(\uu,\Theta\uu)^{1/2} \quad\text{for all } \uu\in U.
      \end{align*}
      The involved constant depends only on $\Omega$.
  \end{enumerate}
\end{lemma}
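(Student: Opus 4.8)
\textbf{Proof plan for Lemma~\ref{lem:kerBext}.}
The plan is to verify the four items essentially in the order stated, since each builds on the previous one. For \eqref{lem:kerBext:a}, boundedness of $B$ follows from the Cauchy--Schwarz inequality applied termwise to $b(\uu,\vv)$ together with the definitions of the norms on $U$ and $V$; the trace pairings $\dual{\widehat\sigma}{v}_\cS$ and $\dual{\widehat u}{\ttau\cdot\nn}_\cS$ are controlled by $\norm{\widehat\sigma}{-1/2,\cS}\norm{v}{V}$ and $\norm{\widehat u}{1/2,\cS}\norm{\ttau}{V}$ by the minimum-energy-extension characterization of those norms (this is where $\norm{\pwnabla v}{}$ and $\norm{\pwdiv\ttau}{}$ in $\norm{\vv}{V}$ enter). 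For the bounds on $\ext$, the lower bound is immediate: $\widetilde u$ is an admissible competitor in the infimum defining $\norm{\gamma_0\ext v}{H^{1/2}(\Gamma)}=\norm{v}{H^{1/2}(\Gamma)}$, and this is one of the four squared summands of $\norm{\ext v}{U}^2$. For the upper bound, I would compute $\norm{\ext v}{U}^2=\norm{\widetilde u}{}^2+\norm{\nabla\widetilde u}{}^2+\norm{\widehat u}{1/2,\cS}^2+\norm{\widehat\sigma}{-1/2,\cS}^2$; the first two terms sum to $\norm{v}{H^{1/2}(\Gamma)}^2$, the skeleton-trace term $\norm{\widehat u}{1/2,\cS}^2$ is bounded by $\norm{\widetilde u}{}^2+\norm{\nabla\widetilde u}{}^2$ (again $\widetilde u$ is admissible in that infimum), and $\norm{\widehat\sigma}{-1/2,\cS}^2$ is bounded by $\norm{\nabla\widetilde u}{}^2+\norm{\div\nabla\widetilde u}{}^2=\norm{\nabla\widetilde u}{}^2+\norm{\widetilde u-0}{}^2$ using the PDE $\div\nabla\widetilde u=\widetilde u$ from \eqref{eq:dpg:harmext} (here $\nabla\widetilde u$ is the admissible $\HH(\div,\Omega)$ competitor). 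Summing gives at most $3\norm{v}{H^{1/2}(\Gamma)}^2$, hence the factor $\sqrt3$.

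For \eqref{lem:kerBext:b}, the inclusion $\ran(\ext)\subseteq\ker(B)$ is a direct computation: if $\uu=\ext v$ then $\ssigma=\nabla\widetilde u$ and the traces $\widehat u,\widehat\sigma$ are the genuine (single-valued, conforming) traces of $\widetilde u$, so integrating by parts element by element in both rows of \eqref{eq:dpg:varform} (with zero right-hand side, using $-\div\nabla\widetilde u+\widetilde u=0$) returns $b(\ext v,\vv)=0$ for all $\vv\in V$. The reverse inclusion $\ker(B)\subseteq\ran(\ext)$ is the substantive direction: given $\uu=(u,\ssigma,\widehat u,\widehat\sigma)$ with $b(\uu,\vv)=0$ for all $\vv\in V$, I would first test only with $\vv=(v,0)$, $v\in C_c^\infty(T)$ on each element, to conclude $\ssigma|_T\in\HH(\div,T)$ with $-\div\ssigma+u=0$ distributionally on $T$; then test with $\vv=(0,\ttau)$, $\ttau\in C_c^\infty(T)^d$, to get $\ssigma=\nabla u$ on $T$, so $u|_T\in H^1(T)$. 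Next, allowing $v$ and $\ttau$ with nonzero traces and comparing the integration-by-parts identities forces $\widehat\sigma|_{\partial T}=\ssigma\cdot\nn_T|_{\partial T}$ and $\widehat u|_{\partial T}=u|_{\partial T}$; single-valuedness of $\widehat u\in H^{1/2}(\cS)$ and of $\widehat\sigma\in H^{-1/2}(\cS)$ across interelement boundaries then upgrades $u$ to a global $H^1(\Omega)$ function and $\ssigma$ to a global $\HH(\div,\Omega)$ function, with $-\div\ssigma+u=0$ in $\Omega$ and $\ssigma=\nabla u$, i.e.\ $-\Delta u+u=0$. Setting $v:=u|_\Gamma=\widehat u|_\Gamma$ we get $\uu=\ext v$. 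Item \eqref{lem:kerBext:c} is then immediate: $\gamma_0\ext v=\widetilde u|_\Gamma=v$ by construction and the definition of $\gamma_0$ via \eqref{trace}.

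For \eqref{lem:kerBext:d}, the identity $\sup_{\vv\in V}\dual{B\uu}{\vv}/\norm{\vv}{V}=b(\uu,\Theta\uu)^{1/2}=\norm{B\uu}{V'}$ is just the definition \eqref{eq:dpg:deftttop} of $\Theta$ together with the Riesz representation in $V$. The real content is the inf-sup / stability bound $\norm{\uu-\ext\gamma_0\uu}{U}\lesssim\norm{B\uu}{V'}$. I would argue as follows: write $\uu_0:=\uu-\ext\gamma_0\uu$; by \eqref{lem:kerBext:c}, $\gamma_0\uu_0=0$, and by linearity $B\uu_0=B\uu$ (since $B\ext\gamma_0\uu=0$ by \eqref{lem:kerBext:b}). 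So it suffices to show $\norm{\uu_0}{U}\lesssim\norm{B\uu_0}{V'}$ for $\uu_0$ with vanishing Dirichlet skeleton trace on $\Gamma$ — equivalently, $B$ restricted to the closed complement of $\ker B$ is bounded below. This is the standard well-posedness result for the ultra-weak Laplacian formulation with homogeneous Dirichlet data, proved via a duality (adjoint) argument: given $\uu_0$ one constructs $\vv\in V$ by solving the adjoint problem $-\pwlap$-type equations whose data is built from the components of $\uu_0$, and shows $b(\uu_0,\vv)\gtrsim\norm{\uu_0}{U}\norm{\vv}{V}$; this uses a global $H^1$ (resp.\ $\HH(\div)$) regularity/lifting on $\Omega$ and is exactly the place where the constant acquires its dependence on $\Omega$. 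I expect this bounded-below estimate to be \emph{the main obstacle}, and I would either cite the corresponding result from \cite{DemkowiczG_11_ADM} (adapted to the mixed/skeleton-trace setting, as in \cite{FuehrerHK_CDB}) or reprove it by the adjoint construction; everything else is bookkeeping with the defining norms.
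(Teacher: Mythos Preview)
Your proposal is correct and follows essentially the same route as the paper: boundedness of $B$ by termwise estimates, the $\sqrt{3}$ bound on $\ext$ via the PDE $\Delta\widetilde u=\widetilde u$, item~\eqref{lem:kerBext:c} by definition, and the reduction of~\eqref{lem:kerBext:d} to the stability estimate $\norm{\uu_0}{U}\lesssim\norm{B\uu_0}{V'}$ for $\uu_0$ with $\gamma_0\uu_0=0$, which is cited from prior work (the paper invokes \cite[Lemma~8]{FuehrerHS_TSD} with $\delta=1$; your plan to cite \cite{DemkowiczG_11_ADM,FuehrerHK_CDB} or reprove via the adjoint construction is equivalent).

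The only structural difference is in~\eqref{lem:kerBext:b}: you prove $\ker(B)\subseteq\ran(\ext)$ directly by the localized-testing argument, whereas the paper obtains it as an immediate corollary of~\eqref{lem:kerBext:d} (if $B\uu=0$ then $\norm{\uu-\ext\gamma_0\uu}{U}\lesssim\norm{B\uu}{V'}=0$). The paper's route is shorter once~\eqref{lem:kerBext:d} is in hand; your direct argument is self-contained and makes~\eqref{lem:kerBext:b} independent of the cited stability result, which is a mild advantage if one wants to keep the kernel characterization elementary.
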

\begin{proof}
In the case of the Poisson equation these results have been established in~\cite{FuehrerHK_CDB}.
In a recent work~\cite{FuehrerHS_TSD} we analyzed a time-stepping scheme for the heat equation which naturally leads
to the equation $-\Delta u + \delta^{-1}u = f$ where $\delta$ corresponds to a time step $k_n$.
Setting $\delta=k_n=1$ in \cite[Lemma 8]{FuehrerHS_TSD} we obtain boundedness of the operator $B$ and stability
\begin{align*}
  \norm{\uu_0}U \lesssim \norm{B\uu_0}{V'} \quad\text{for all } \uu_0 \in U \text{ with } \gamma_0\uu_0 = 0.
\end{align*}
By definition of $\ext$ one sees~\eqref{lem:kerBext:c}. 
Furthermore, integration by parts shows $\ran(\ext)\subseteq \ker(B)$.
For $\uu\in U$ we define $\uu_0:=\uu-\ext\gamma_0\uu$ and infer
\begin{align*}
  \norm{\uu-\ext\gamma_0\uu}U \lesssim \norm{B(\uu-\ext\gamma_0\uu)}{V'} = \norm{B\uu}{V'},
\end{align*}
which proves~\eqref{lem:kerBext:d} as well as $\ker(B)\subseteq \ran(\ext)$, hence,~\eqref{lem:kerBext:b}.

It remains to show the relations for $\ext$ in~\eqref{lem:kerBext:a}.
Let $\uu = (u,\ssigma,\widehat u,\widehat\sigma) = \ext v$ for $v\in H^{1/2}(\Gamma)$. 
Then,
\begin{align*}
  \norm{\ext v}U^2 \geq \norm{u}{}^2 + \norm{\ssigma}{}^2 = \norm{u}{}^2 + \norm{\nabla u}{}^2 =
  \norm{v}{H^{1/2}(\Gamma)}^2.
\end{align*}
On the other hand, using that $\Delta u = u$ by construction of $\ext v$, we deduce that
\begin{align*}
  \norm{\ext v}U^2 &= \norm{u}{}^2 + \norm{\ssigma}{}^2  + \norm{\widehat u}{1/2,\cS}^2 
  + \norm{\widehat\sigma}{-1/2,\Gamma}^2 \\
  &\leq \norm{u}{}^2 + \norm{\nabla u}{}^2 +\norm{u}{}^2 + \norm{\nabla u}{}^2
  + \norm{\nabla u}{}^2 + \norm{\Delta u}{}^2 \\
  &= 3\norm{u}{}^2 + 3\norm{\nabla u}{}^2 = 3\norm{v}{H^{1/2}(\Gamma)}^2.
\end{align*}
This concludes the proof.
\end{proof}

In Lemma~\ref{lem:dpg:Hm12estimate} below we give an explicit bound of the control of the Neumann trace
for functions of the quotient space $U/\ker(B)$.
For its proof we need the following technical result.

\begin{lemma}\label{lem:dpg:dualestimate}
  Let $\widehat v\in H^{1/2}(\Gamma)$. The problem
  \begin{subequations}\label{eq:dpg:dualestimate}
  \begin{align}
    \div\ttau + v &= 0\quad\text{in } \Omega, \label{eq:dpg:dualestimate:a} \\
    \ttau + \nabla v &= 0\quad\text{in } \Omega, \label{eq:dpg:dualestimate:b} \\
    v|_{\Gamma} &= \widehat v \label{eq:dpg:dualestimate:c}
  \end{align}
  \end{subequations}
  admits a unique solution $(v,\ttau) \in H^1(\Omega)\times \HH(\div,\Omega)$ with $\Delta v\in H^1(\Omega)$ and
  \begin{align*}
    \norm{\ttau}{}^2 + \norm{\div\ttau}{}^2 = \norm{\nabla v}{}^2 + \norm{v}{}^2 = 
    \norm{\widehat v}{H^{1/2}(\Gamma)}^2.
  \end{align*}
\end{lemma}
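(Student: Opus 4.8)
The plan is to recognize \eqref{eq:dpg:dualestimate} as a reformulation of a single second-order elliptic problem and then read off the norm identities from the standard $H^{1/2}(\Gamma)$ energy characterization. First I would substitute \eqref{eq:dpg:dualestimate:b} into \eqref{eq:dpg:dualestimate:a} to eliminate $\ttau$: setting $\ttau = -\nabla v$ turns \eqref{eq:dpg:dualestimate:a} into $-\Delta v + v = 0$ in $\Omega$, together with the boundary condition $v|_\Gamma = \widehat v$. This is exactly the quasi-harmonic extension problem \eqref{eq:dpg:harmext}, which has a unique solution $v = \widetilde u \in H^1(\Omega)$; then $\ttau := -\nabla v$ lies in $(L^2(\Omega))^d$ and satisfies $\div\ttau = -\Delta v = -v \in L^2(\Omega)$, so $\ttau \in \HH(\div,\Omega)$ as claimed. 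Conversely, any solution $(v,\ttau)$ of the first-order system yields such a $v$, so uniqueness for the first-order system follows from uniqueness for \eqref{eq:dpg:harmext}.

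Next I would verify the regularity claim $\Delta v \in H^1(\Omega)$: since $-\Delta v + v = 0$ we have $\Delta v = v \in H^1(\Omega)$ directly, with no elliptic regularity needed. For the norm identities, the relation $\norm{\ttau}{}^2 + \norm{\div\ttau}{}^2 = \norm{\nabla v}{}^2 + \norm{v}{}^2$ is immediate from $\ttau = -\nabla v$ and $\div\ttau = -v$. The remaining identity $\norm{\nabla v}{}^2 + \norm{v}{}^2 = \norm{\widehat v}{H^{1/2}(\Gamma)}^2$ is precisely the statement, already noted in the text after \eqref{eq:dpg:harmext}, that the infimum in the definition of $\norm{\widehat v}{H^{1/2}(\Gamma)}$ is attained at the quasi-harmonic extension $\widetilde u = v$; one proves this by the standard argument that for any competitor $w \in H^1(\Omega)$ with $w|_\Gamma = \widehat v$, writing $w = v + (w-v)$ with $w - v \in H^1_0(\Omega)$ and using the weak form $\ip{\nabla v}{\nabla\phi} + \ip{v}{\phi} = 0$ for all $\phi \in H^1_0(\Omega)$ gives $\norm{w}{}^2 + \norm{\nabla w}{}^2 = \norm{v}{}^2 + \norm{\nabla v}{}^2 + \norm{w-v}{}^2 + \norm{\nabla(w-v)}{}^2 \ge \norm{v}{}^2 + \norm{\nabla v}{}^2$.

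I do not anticipate a genuine obstacle here; the lemma is essentially bookkeeping once one spots that the first-order system collapses to \eqref{eq:dpg:harmext}. The only point requiring a little care is confirming $\ttau \in \HH(\div,\Omega)$ rather than merely $(L^2(\Omega))^d$, but this is handled by the observation $\div\ttau = -v \in L^2(\Omega)$; and the mild regularity statement $\Delta v \in H^1(\Omega)$ is likewise free from the equation itself. So the main ``work'' is simply invoking the energy-extension characterization of the $H^{1/2}(\Gamma)$ norm that was already recorded in the text.
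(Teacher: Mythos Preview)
Your proposal is correct and follows essentially the same route as the paper: reduce the first-order system to the quasi-harmonic extension problem $-\Delta v + v = 0$, $v|_\Gamma = \widehat v$, set $\ttau = -\nabla v$, read off $\Delta v = v \in H^1(\Omega)$ and the norm identities from the energy characterization of $\norm{\cdot}{H^{1/2}(\Gamma)}$, and deduce uniqueness from uniqueness of the second-order problem. You even supply a bit more detail than the paper on why the infimum is attained at the quasi-harmonic extension.
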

\begin{proof}
Let $v\in H^1(\Omega)$ be the unique solution of 
\begin{align*}
  -\Delta v + v = 0\quad\text{in } \Omega, \qquad v|_\Gamma = \widehat v.
\end{align*}
Then, $\norm{\nabla v}{}^2 + \norm{v}{}^2 = \norm{\widehat v}{H^{1/2}(\Gamma)}^2$ by definition of the latter norm.
Define $\ttau:=-\nabla v\in L^2(\Omega)$. Since $\Delta v = v \in H^1(\Omega)$, we have $\ttau\in \HH(\div,\Omega)$,
and $\div\ttau = -\Delta v = -v$ shows~\eqref{eq:dpg:dualestimate:a}.
To see unique solvability, let additionally $(v_2,\ttau_2)\in H^1(\Omega)\times \HH(\div,\Omega)$
solve~\eqref{eq:dpg:dualestimate}. The difference $w:=v-v_2$ satisfies $-\Delta w + w =0$ in $\Omega$
with $w|_\Gamma = 0$. Thus $w=0$ and $\ttau = -\nabla v = -\nabla v_2 = \ttau_2$ as well.
\end{proof}

\begin{lemma}\label{lem:dpg:Hm12estimate}
  There holds
  \begin{align}\label{eq:dpg:Hm12estimate}
    \norm{\gamma_\nn(\uu-\ext\gamma_0\uu)}{H^{-1/2}(\Gamma)} \leq \sqrt{2}\norm{B\uu}{V'} \quad\text{for all } \uu\in U.
  \end{align}
\end{lemma}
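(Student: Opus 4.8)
The plan is to bound the Neumann trace $\gamma_\nn(\uu-\ext\gamma_0\uu)$ by testing the defining duality $\dual{\cdot}{\cdot}_\Gamma$ against an arbitrary $\widehat v\in H^{1/2}(\Gamma)$ and rewriting everything in terms of the bilinear form $b$ applied to a well-chosen test function $\vv\in V$ built from $\widehat v$. First I would set $\uu_0 := \uu - \ext\gamma_0\uu$ and note that $\gamma_0\uu_0 = 0$ by Lemma~\ref{lem:kerBext}\eqref{lem:kerBext:c}, while $B\uu_0 = B\uu$ by Lemma~\ref{lem:kerBext}\eqref{lem:kerBext:b}. Given $\widehat v\in H^{1/2}(\Gamma)$, let $(v,\ttau)\in H^1(\Omega)\times\HH(\div,\Omega)$ be the solution of problem~\eqref{eq:dpg:dualestimate} provided by Lemma~\ref{lem:dpg:dualestimate}, so that $\div\ttau + v = 0$, $\ttau + \nabla v = 0$, $v|_\Gamma = \widehat v$, and $\norm{\ttau}{}^2 + \norm{\div\ttau}{}^2 = \norm{\nabla v}{}^2 + \norm{v}{}^2 = \norm{\widehat v}{H^{1/2}(\Gamma)}^2$. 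Regarding $(v,\ttau)$ as an element $\vv\in V$ (i.e., with $\TT$-piecewise operators coinciding with the global ones since $v\in H^1(\Omega)$, $\ttau\in\HH(\div,\Omega)$), its $V$-norm satisfies $\norm{\vv}V^2 = \norm{v}{}^2 + \norm{\nabla v}{}^2 + \norm{\ttau}{}^2 + \norm{\div\ttau}{}^2 = 2\norm{\widehat v}{H^{1/2}(\Gamma)}^2$.

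The key computation is to evaluate $b(\uu_0,\vv)$ for this particular $\vv$. Writing $\uu_0 = (u_0,\ssigma_0,\widehat u_0,\widehat\sigma_0)$ with $\widehat u_0|_\Gamma = \gamma_0\uu_0 = 0$, and using that $v\in H^1(\Omega)$ and $\ttau\in\HH(\div,\Omega)$ are conforming (so the skeleton terms collapse to boundary terms via~\eqref{trace} and~\eqref{ntrace}), I expect
\begin{align*}
  b(\uu_0,\vv) &= \ip{\ssigma_0}{\nabla v} + \ip{u_0}v - \dual{\widehat\sigma_0}{v}_\cS
    + \ip{\ssigma_0}{\ttau} + \ip{u_0}{\div\ttau} - \dual{\widehat u_0}{\ttau\cdot\nn}_\cS \\
  &= \ip{\ssigma_0}{\nabla v + \ttau} + \ip{u_0}{v + \div\ttau}
    - \dual{\gamma_\nn\uu_0}{v|_\Gamma}_\Gamma - \dual{\gamma_0\uu_0}{\ttau\cdot\nn_\Omega}_\Gamma.
\end{align*}
By the two PDEs in~\eqref{eq:dpg:dualestimate} the first two volume terms vanish, and $\gamma_0\uu_0 = 0$ kills the last term, leaving $b(\uu_0,\vv) = -\dual{\gamma_\nn\uu_0}{\widehat v}_\Gamma$. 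Hence $|\dual{\gamma_\nn(\uu-\ext\gamma_0\uu)}{\widehat v}_\Gamma| = |b(\uu_0,\vv)| = |\dual{B\uu_0}{\vv}| = |\dual{B\uu}{\vv}| \le \norm{B\uu}{V'}\norm{\vv}V = \sqrt2\,\norm{B\uu}{V'}\norm{\widehat v}{H^{1/2}(\Gamma)}$. Dividing by $\norm{\widehat v}{H^{1/2}(\Gamma)}$ and taking the supremum over $\widehat v$ gives~\eqref{eq:dpg:Hm12estimate}, using the definition of $\norm{\cdot}{H^{-1/2}(\Gamma)}$ by duality against $\norm{\cdot}{H^{1/2}(\Gamma)}$.

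The main obstacle — really the only nontrivial point — is the bookkeeping in the integration-by-parts identity for $b(\uu_0,\vv)$: one must correctly invoke the skeleton-to-boundary reductions~\eqref{trace} and~\eqref{ntrace}, which require $\ttau$ and $v$ to be globally conforming (they are, by Lemma~\ref{lem:dpg:dualestimate}), and one must check that the duality pairing $\dual{\widehat\sigma_0}{v}_\cS$ against a conforming $v$ reduces exactly to $\dual{\gamma_\nn\uu_0}{v|_\Gamma}_\Gamma$ with the sign as used above; similarly the $\widehat u_0$ term reduces to a $\Gamma$-pairing that vanishes because $\gamma_0\uu_0 = 0$. Everything else is routine: the Cauchy–Schwarz step, the constant $\sqrt2$ coming directly from $\norm{\vv}V^2 = 2\norm{\widehat v}{H^{1/2}(\Gamma)}^2$, and the passage to the supremum.
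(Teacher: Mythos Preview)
Your proof is correct and follows essentially the same approach as the paper: choose the test function $\vv=(v,\ttau)$ from Lemma~\ref{lem:dpg:dualestimate}, use that $\gamma_0(\uu-\ext\gamma_0\uu)=0$ and $\ext\gamma_0\uu\in\ker(B)$ to reduce $b(\uu-\ext\gamma_0\uu,\vv)$ to the boundary pairing, and conclude via $\norm{\vv}V=\sqrt{2}\norm{\widehat v}{H^{1/2}(\Gamma)}$. Your version is in fact more explicit than the paper's about how the volume terms cancel and how the skeleton pairings collapse to $\Gamma$ via~\eqref{trace} and~\eqref{ntrace}.
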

\begin{proof}
  Let $\widehat v\in H^{1/2}(\Gamma)$ and choose the test function $\vv=(v,\ttau)\in H^1(\Omega) \times
  \HH(\div,\Omega) \subseteq V$ to be the solution of~\eqref{eq:dpg:dualestimate}.
  By Lemma~\ref{lem:dpg:dualestimate} there holds $\norm{\vv}{V}=\sqrt{2}\norm{\widehat v}{H^{1/2}(\Gamma)}$.
  Then, by the definition of the bilinear form $b(\cdot,\cdot)$, and since $\gamma_0(\uu-\ext\gamma_0\uu) = 0$, we have
  \begin{align*}
    \lvert \dual{\gamma_\nn(\uu-\ext\gamma_0\uu)}{\widehat v}_\Gamma \rvert 
    &= \lvert b(\uu-\ext\gamma_0\uu,\vv)\rvert = \lvert b(\uu,\vv)\rvert
    \leq \norm{B\uu}{V'} \norm{\vv}{V}
    = \sqrt{2} \norm{B\uu}{V'} \norm{\widehat v}{H^{1/2}(\Gamma)},
  \end{align*}  
  where we have used that $\ext\gamma_0\uu\in \ker(B)$. 
  Dividing by $\norm{\widehat v}{H^{1/2}(\Gamma)}$ and taking the supremum over all $\widehat v\in H^{1/2}(\Gamma) \setminus 
  \{0\}$, this proves~\eqref{eq:dpg:Hm12estimate}.
\end{proof}

\subsection{Proof of Theorem~\ref{thm:dpg:main}} \label{sec:dpg:proof}

First, we prove boundedness and coercivity of $a^\star(\cdot,\cdot)$.
Then, the Lions-Stampacchia theorem, see, e.g.,~\cite{Glowinski_08_NMN,GlowinskiLT_81_NAV,Rodrigues_87_OPM},
proves unique solvability of~\eqref{eq:dpg:varineq} provided that $F : U\to \R$ is a
linear functional, which follows by the boundedness of $\Theta:U\to\R$.

We show the boundedness. Note that $b:U\times V\to \R$ and $\Theta:U\to V$ are uniformly bounded,
\begin{align*}
  |b(\uu,\Theta_\beta\vv)| \leq \beta C_2 \norm{\uu}U\norm{\Theta\vv}V \leq \beta C_2^2\norm{\uu}U\norm{\vv}U
  \quad\text{for all }\uu,\vv\in U.
\end{align*}
By duality and boundedness of $\gamma_0,\gamma_\nn$ (see Lemma~\ref{lem:trace}) we have
\begin{align*}
  |\dual{\gamma_\nn\uu}{\gamma_0\vv}_\Gamma| \leq \norm{\gamma_\nn\uu}{H^{-1/2}(\Gamma)}
  \norm{\gamma_0\vv}{H^{1/2}(\Gamma)} \leq \norm{\uu}{U}\norm{\vv}U \quad\text{for all }\uu,\vv\in U.
\end{align*}
Next, we follow~\cite{FuehrerHK_CDB} to prove coercivity.
Note that $a^0(\uu,\uu) = a^\nn(\uu,\uu) = a^s(\uu,\uu)$ for all $\uu\in U$.
Let $\uu\in U$. By the triangle inequality and 
Lemma~\ref{lem:kerBext} we get
\begin{align*}
  \norm{\uu}{U}^2 \lesssim \norm{\uu-\ext\gamma_0\uu}U^2 + \norm{\ext\gamma_0\uu}U^2 \lesssim \norm{B\uu}{V'}^2 + 
  \norm{\gamma_0\uu}{H^{1/2}(\Gamma)}^2.
\end{align*}
Let $\widetilde u\in H^1(\Omega)$ be the quasi-harmonic extension~\eqref{eq:dpg:harmext} of $\gamma_0\uu$.
The definition of the $H^{1/2}(\Gamma)$-norm and integration by parts show that
\begin{align*}
  \norm{\gamma_0\uu}{H^{1/2}(\Gamma)}^2 &= \norm{\widetilde u}{H^1(\Omega)}^2 = \norm{\widetilde u}{L^2(\Omega)}^2
  + \norm{\nabla \widetilde u}{L^2(\Omega)}^2
  = \dual{\partial_\nn \widetilde u}{\widetilde u|_\Gamma}_\Gamma = \dual{\gamma_\nn \ext\gamma_0\uu}{\gamma_0\uu}_\Gamma.
\end{align*}
This gives
\begin{align*}
  \norm{B\uu}{V'}^2 + \dual{\gamma_\nn\ext\gamma_0\uu}{\gamma_0\uu}_\Gamma 
  = \norm{B\uu}{V'}^2 + \dual{\gamma_\nn\ext\gamma_0\uu-\gamma_\nn\uu}{\gamma_0\uu}_\Gamma + 
  \dual{\gamma_\nn\uu}{\gamma_0\uu}_\Gamma.
\end{align*}
Using duality, Lemma~\ref{lem:dpg:Hm12estimate}, and Young's inequality with some parameter $\delta>0$, we obtain
\begin{align*}
  \dual{\gamma_\nn\ext\gamma_0\uu-\gamma_\nn\uu}{\gamma_0\uu}_\Gamma &\leq
  \norm{\gamma_\nn\ext\gamma_0\uu-\gamma_\nn\uu}{H^{-1/2}(\Gamma)} \norm{\gamma_0\uu}{H^{1/2}(\Gamma)}
  \leq \sqrt{2}\norm{B\uu}{V'}\norm{\gamma_0\uu}{H^{1/2}(\Gamma)} \\
  &\leq \delta^{-1} \norm{B\uu}{V'}^2 + \frac{\delta}2 \norm{\gamma_0\uu}{H^{1/2}(\Gamma)}^2.
\end{align*}
For $\delta = 1$ we get
\begin{align*}
  \norm{B\uu}{V'}^2 + \tfrac12 \norm{\gamma_0\uu}{H^{1/2}(\Gamma)}^2 \leq 2 \norm{B\uu}{V'}^2 +
  \dual{\gamma_\nn\uu}{\gamma_0\uu}_\Gamma.
\end{align*}
Thus,
\begin{align*}
  C_1^{-1} \norm{\uu}U^2 \leq \beta\norm{B\uu}{V'}^2 +\dual{\gamma_\nn\uu}{\gamma_0\uu}_\Gamma = a^\star(\uu,\uu)
\end{align*}
for all $\beta\geq 2$, where the constant $C_1>0$ depends only on $\Omega$.

Regarding the equivalence of problems~\eqref{eq:intro:sigpde} and~\eqref{eq:dpg:varineq}
we know that the unique solution $u$ of~\eqref{eq:intro:sigpde} with $\uu$
defined as in the assertion satisfies~\eqref{eq:dpg:varineq} by construction. 
The other direction follows by existence of a unique solution of~\eqref{eq:dpg:varineq}.

Finally,
note that~\eqref{eq:dpg:exactsol} follows by construction. However, in the case $\star=0$ we can infer this identity
directly from~\eqref{eq:dpg:varineq}: 
Let $\uu$ denote the solution of~\eqref{eq:dpg:varineq} with $\star=0$.
Set $\vv = \uu\pm (\ww-\ext\gamma_0\ww)$ for some arbitrary $\ww\in U$.
Since $\gamma_0(\ww-\ext\gamma_0\ww) = 0$, we infer $\vv\in K^0$, so that we can use it as a test
function in~\eqref{eq:dpg:varineq}. This gives
\begin{align*}
  \pm a^0(\uu,\ww-\ext\gamma_0\ww) \geq \pm F(\ww-\ext\gamma_0\ww).
\end{align*}
Hence, 
\begin{align*}
  a^0(\uu,\ww-\ext\gamma_0\ww) = F(\ww-\ext\gamma_0\ww) \quad\text{for all }\ww\in U.
\end{align*}
Note that $\ext\gamma_0\ww\in\ker B = \ker \Theta$. This leads to 
\begin{align*}
  F(\ww-\ext\gamma_0\ww) &= L(\Theta_\beta (\ww-\ext\gamma_0\ww)) = L(\Theta_\beta\ww) = F(\ww), \text{ and }\\
  a^0(\uu,\ww-\ext\gamma_0\ww) &= b(\uu,\Theta_\beta\ww) + \dual{\gamma_\nn\uu}{\gamma_0(\ww-\ext\gamma_0\ww)}_\Gamma 
  = b(\uu,\Theta_\beta\ww).
\end{align*}
This concludes the proof.

\section{DPG method for a singularly perturbed problem}\label{sec:sp}

In this section we introduce and analyze a DPG method for the Signorini problem of
reaction dominated diffusion, that is, for \eqref{eq:intro:sigpde} with small positive constant
$c=\eps$ ($0<\eps\le 1$).

We mainly stick to the notation as given in Section~\ref{sec:dpg}, but need some additional definitions.
Also, we redefine some objects like the bilinear form $b:\;U\times V\to\R$,
the spaces $U$, $V$, and some norms. In fact, our objective of robust control of field variables
forces us to carefully scale parts of norms with coefficients depending on the diffusion parameter $\eps$.

\subsection{Notation} \label{sec:sp:notation}

For functions $\widehat u \in H^{1/2}(\cS)$, $\widehat\sigma\in H^{-1/2}(\cS)$ we define the skeleton norms
\begin{align*}
  \norm{\widehat u}{1/2,\cS} &:=
  \inf \left\{ (\norm{w}{}^2 + \eps^{1/2}\norm{\nabla w}{}^2)^{1/2} \,:\, w\in H^1(\Omega),
               \widehat u|_{\partial\el}=w|_{\partial\el}\; \forall\el\in\TT \right\},\\
  \norm{\widehat\sigma}{-1/2,\cS} &:=
  \inf \left\{ (\norm{\qq}{}^2\!+\!\eps\norm{\div\qq}{}^2)^{1/2} \,:\, \qq\!\in\!\HH(\div,\Omega), 
               \widehat\sigma|_{\partial\el}\!=\!(\qq\cdot\nn_{\el})|_{\partial\el}\; \forall\el\!\in\!\TT \right\}.
\end{align*}
Moreover, for $\widehat u\in H^{1/2}(\Gamma)$, $\widehat\sigma\in H^{1/2}(\Gamma)$
we define the boundary norms
\begin{align*}
  \norm{\widehat u}{1/2,\Gamma} &:=
  \inf \left\{ (\norm{w}{}^2 + \eps^{1/2}\norm{\nabla w}{}^2)^{1/2} \,:\, w\in H^1(\Omega),
               w|_\Gamma=\widehat u \right\},\\
  \norm{\widehat\sigma}{-1/2,\Gamma} &:=
  \inf \left\{ (\norm{\qq}{}^2\!+\!\eps\norm{\div\qq}{}^2)^{1/2} \,:\, \qq\!\in\!\HH(\div,\Omega), 
               (\qq\cdot\nn_{\Omega})|_{\Gamma}=\widehat\sigma \right\}.
\end{align*}
We will need another norm in $H^{1/2}(\Gamma)$ defined by
\begin{align*}
  \norm{\widehat u}{H^{1/2}(\Gamma)}
  :=
  \inf \set{ \big(\norm{w}{}^2 + \eps \norm{\nabla w}{}^2\big)^{1/2}}{w \in H^1(\Omega), w|_\Gamma = \widehat u}.
\end{align*}
Obviously, the latter norm is weaker than the previously defined corresponding boundary norm,
$\norm{\cdot}{H^{1/2}(\Gamma)}\le \norm{\cdot}{1/2,\Gamma}$.
The ultra-weak formulation from~\cite{HeuerK_RDM} is based on the spaces $U$ and $V$ defined by
\begin{align*}
  \widetilde U &:= L^2(\Omega) \times [L^2(\Omega)]^d \times L^2(\Omega) \times H^{1/2}(\cS) \times H^{1/2}(\cS) 
  \times H^{-1/2}(\cS) \times H^{-1/2}(\cS), \\
  U &:= \set{(u,\ssigma,\rho,\widehat u^a, \widehat u^b, \widehat\sigma^a,\widehat\sigma^b) \in \widetilde U}{
  \widehat u^a|_\Gamma = \widehat u^b|_\Gamma}, \\
  V &:= H^1(\TT) \times \HH(\div,\TT) \times H^1(\Delta,\TT), \quad\text{where}\\
  H^1(\Delta,\TT) &:= \{w\in H^1(\TT) \,:\, \Delta w|_T \in L^2(T) \,\forall T\in\TT\}.
\end{align*}
For the analysis we need two different norms in $U$,
\begin{align*}
  \norm{\uu}{U,1}^2 &:= \norm{u}{}^2 + \norm{\ssigma}{}^2 + \eps \norm{\rho}{}^2 \\
  &\qquad + \eps^{3/2}\norm{\widehat u^a}{1/2,\cS}^2 + \eps \norm{\widehat u^b}{1/2,\cS}^2
  + \eps^{3/2}\norm{\widehat \sigma^a}{-1/2,\cS}^2 + \eps^{5/2}\norm{\widehat \sigma^b}{-1/2,\cS}^2, \\
  \norm{\uu}{U,2}^2 &:= \norm{u}{}^2 + \norm{\ssigma}{}^2 + \eps \norm{\rho}{}^2 \\
  &\qquad + \norm{\widehat u^a}{1/2,\cS}^2 + \eps^{-1/2} \norm{\widehat u^b}{1/2,\cS}^2
  + \norm{\widehat \sigma^a}{-1/2,\cS}^2 + \eps^{1/2}\norm{\widehat \sigma^b}{-1/2,\cS}^2
\end{align*}
for $\uu=(u,\ssigma,\rho,\widehat u^a,\widehat u^b,\widehat\sigma^a,\widehat\sigma^b)\in U$.
These norms differ in their $\eps$-scalings of the skeleton components so that
$\norm{\cdot}{U,1}\le \norm{\cdot}{U,2}$. In both cases, field components are measured in
the so-called \emph{balanced norm} $(\norm{u}{}^2 + \norm{\ssigma}{}^2 + \eps \norm{\rho}{}^2)^{1/2}$
which, for the exact solution, is $(\norm{u}{}^2 + \eps^{1/2}\norm{\nabla u}{}^2 + \eps^{3/2} \norm{\Delta u}{}^2)^{1/2}$,
cf.~\cite{LinS_12_BFE,HeuerK_RDM}.

The test space $V$ is equipped with the norm
\begin{align*}
  \norm{\vv}V^2 &:= \eps^{-1}\norm{\mu}{}^2 + \norm{\pwnabla\mu}{}^2 + \eps^{-1/2} \norm{\ttau}{}^2 
  + \norm{\pwdiv\ttau}{}^2 \\
  &\qquad\qquad + \norm{v}{}^2 + (\eps^{1/2}+\eps)\norm{\pwnabla v}{}^2 + \eps^{3/2}\norm{\pwlap v}{}^2 \quad\text{for }
  \vv = (\mu,\ttau,v)\in V.
\end{align*}
This norm is induced by the inner product $\ip\cdot\cdot_V$ on $V$.
Note that this norm is equivalent to the one defined in~\cite{HeuerK_RDM}. The only difference is that the term
$\eps\norm{\pwnabla v}{}^2$ is not present in~\cite{HeuerK_RDM}. We use this norm here, to get a smaller constant in
Lemma~\ref{lem:sp:Hm12estimate} below.

\subsection{Ultra-weak formulation} \label{sec:sp:lap}

The ultra-weak formulation taken from~\cite{HeuerK_RDM} is derived by rewriting problem~\eqref{eq:intro:sigpde}
(with $c=\eps$) as a first order system,
\begin{align*}
  \rho-\div\ssigma = 0, \qquad \eps^{-1/4}\ssigma - \nabla u = 0, \qquad -\eps^{3/4}\rho + u = f.
\end{align*}
We test the first two equations with $\mu\in H^1(\TT)$ resp. $\ttau\in \HH(\div,\TT)$ element-wise, integrate by parts
and sum up over all elements.
The third equation is tested with $v-\eps^{1/2}\pwlap v$ for $v\in H^1(\Delta,\TT)$. Integrating by parts and using the
first two equations, we get the formulation
\begin{subequations}\label{eq:sp:weakform}
\begin{align}
  \ip{\rho}{\mu} + \ip{\ssigma}{\pwnabla\mu} - \dual{\widehat \sigma^a}{\mu}_\cS &= 0, \\
  \eps^{-1/4}\ip{\ssigma}{\ttau} + \ip{u}{\pwdiv\ttau} - \dual{\widehat u^a}{\ttau\cdot\nn}_\cS &= 0, \\
  \begin{split}
  \eps^{3/4}\ip{\ssigma}{\pwnabla v} - \eps^{3/4} \dual{\widehat\sigma^b}{v}_\cS + \ip{u}{v} \qquad\qquad\qquad& \quad
  \\
  +\eps^{5/4} \ip{\rho}{\pwlap v} + \eps^{1/4} \ip{\ssigma}{\pwnabla v} - \eps^{1/2} \dual{\widehat u^b}{\pwnabla
  v\cdot\nn}_\cS &= \ip{f}{v-\eps^{1/2}\pwlap v}.
\end{split}
\end{align}
\end{subequations}
The left and right-hand sides give rise to the definition of the bilinear form $b : U\times V \to \R$ and the linear
functional $L : V \to \R$
\begin{align*}
  b(\uu,\vv) &:= \ip{\rho}{\mu} + \ip{\ssigma}{\pwnabla\mu} - \dual{\widehat \sigma^a}{\mu}_\cS  + 
  \eps^{-1/4}\ip{\ssigma}{\ttau} + \ip{u}{\pwdiv\ttau} - \dual{\widehat u^a}{\ttau\cdot\nn}_\cS \\
  &\qquad +\eps^{3/4}\ip{\ssigma}{\pwnabla v} - \eps^{3/4} \dual{\widehat\sigma^b}{v}_\cS + \ip{u}{v} \\
  &\qquad +\eps^{5/4} \ip{\rho}{\pwlap v} + \eps^{1/4} \ip{\ssigma}{\pwnabla v} 
  - \eps^{1/2} \dual{\widehat u^b}{\pwnabla v\cdot\nn}_\cS,
  \\
  L(\vv) &:= \ip{f}{v-\eps^{1/2}\pwlap v},
\end{align*}
for all $\uu = (u,\ssigma,\rho,\widehat u^a,\widehat u^b, \widehat\sigma^a,\widehat\sigma^b) \in U$, $\vv =
(\mu,\ttau,v) \in V$.

The trial-to-test operator $\Theta_\beta : U \to V$ is defined as before, see~\eqref{eq:dpg:deftttop}.
Again, the operator $B:\;U \to V'$ is induced by the bilinear form $b(\cdot,\cdot)$
and~\eqref{eq:sp:weakform} can be written as
\begin{align*}
  \uu\in U:\quad B\uu = L.
\end{align*}
The non-trivial kernel of $B$ is related to the trace operators. For the present space $U$ we define them by
\begin{alignat*}{2}
  \gamma_0 &: U \to H^{1/2}(\Gamma), &\qquad \gamma_0\uu &:= \widehat u^a|_\Gamma, \\
  \gamma_\nn &: U \to H^{-1/2}(\Gamma), &\qquad \gamma_\nn\uu &:= \widehat\sigma^a|_\Gamma.
\end{alignat*}
For simplicity we only consider the symmetric formulation. The other cases can
be derived similarly, see also Section~\ref{sec:dpg}.
Analogously as in the unperturbed case we introduce the non-empty convex subset
\begin{align*}
  K^s = \set{ \uu\in U}{\gamma_0\uu\geq 0, \gamma_\nn\uu \geq 0},
\end{align*}
and define the bilinear form $a^s:\;U\times U\to\R$ and linear functional $F:\;U\to\R$ by
\begin{align*}
  a^s(\uu,\vv) &:= b(\uu,\Theta_\beta\vv) + \tfrac12 \eps^{1/4} ( \dual{\gamma_\nn\uu}{\gamma_0\vv}_\Gamma
  +\dual{\gamma_\nn\vv}{\gamma_0\uu}_\Gamma ), \\
  F(\vv) &:= L(\Theta_\beta\vv)
\end{align*}
for $\uu,\vv\in U$. Here, $\beta>0$ is a constant to be fixed.
Then, our variational inequality reads: Find $\uu\in K^s$ such that
\begin{align}\label{eq:sp:varineq}
  a^s(\uu,\vv-\uu) \geq F(\vv-\uu) \quad\text{for all } \vv\in K^s.
\end{align}
In the singularly perturbed case it is convenient to state coercivity and boundedness
of the bilinear form $a^s(\cdot,\cdot)$ in the \emph{energy-based} norm
\begin{align*}
  \enorm{\uu}^2 := \norm{B\uu}{V'}^2 + \eps^{-1/2} \norm{\gamma_0\uu}{H^{1/2}(\Gamma)}^2
  \quad (\uu\in U).
\end{align*}
Note that $\norm{B\,\cdot}{V'}$ is the \emph{energy norm} in standard DPG settings whereas
in our case it is a semi-norm.

Corresponding to Theorem~\ref{thm:dpg:main} we have the following result.

\begin{theorem}\label{thm:sp:main}
  For all $\beta\geq 3$ 
  the bilinear form $a^s:U\times U \to \R$ is coercive,
  \begin{align*} 
    \enorm{\uu}^2 \leq C_1 a^s(\uu,\uu) \quad\text{for all } \uu\in U,
  \end{align*}
  and bounded,
  \begin{align*} 
    |a^s(\uu,\vv)| \leq C_2 \enorm{\uu}\enorm{\vv} \quad\text{for all }\uu,\vv \in U.
  \end{align*}
  The constants $C_1,C_2>0$ do not depend on $\Omega$, $\TT$, or $\eps$.
  $C_1$ is independent of $\beta$ but $C_2$ is not.
  Furthermore,
  \begin{align}\label{eq:sp:normequiv}
    \norm{\uu}{U,1} \lesssim \enorm{\uu} \lesssim \norm{\uu}{U,2} \quad\text{for all } \uu\in U
  \end{align}
  with generic constants that are independent of $\TT$ and $\eps$.

  The variational inequality~\eqref{eq:sp:varineq} is uniquely solvable and equivalent to
  problem~\eqref{eq:intro:sigpde} (setting $c=\eps$) in the following sense: 
  If $u\in H^1(\Omega)$ solves problem~\eqref{eq:intro:sigpde},
  then $\uu = (u,\ssigma,\rho,\widehat u^a,\widehat u^b,\widehat\sigma^a,\widehat\sigma^b)\in K^s$ 
  with $\ssigma := \eps^{1/4}\nabla u$, $\widehat u^\star|_{\partial T} := u|_{\partial T}$,
  $\widehat\sigma^\star|_{\partial T} := \sigma\cdot\nn_T|_{\partial T}$
  for all $T\in\TT$ ($\star\in\{a,b\}$) solves~\eqref{eq:sp:varineq}. On the other hand,
  if $\uu = (u,\ssigma,\rho,\widehat u^a,\widehat u^b,\widehat\sigma^a,\widehat\sigma^b)\in K^s$ solves~\eqref{eq:sp:varineq}, 
  then $u\in H^1(\Omega)$ solves~\eqref{eq:intro:sigpde}.

  Moreover, the unique solution $\uu\in K^s$ of~\eqref{eq:sp:varineq} satisfies
  \begin{align*} 
    b(\uu,\Theta_\beta \ww) = F(\ww) \quad\text{for all } \ww\in U.
  \end{align*}
\end{theorem}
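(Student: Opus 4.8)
The plan is to follow the template of the proof of Theorem~\ref{thm:dpg:main}, replacing the estimates for the Poisson-type operator by their $\eps$-weighted counterparts for the reaction-dominated operator $-\eps\Delta u + u$. First I would establish the structural results that mirror Lemma~\ref{lem:kerBext}: namely, that $B:\;U\to V'$ is bounded, that $\ker(B)$ equals the range of an extension operator $\ext:\;H^{1/2}(\Gamma)\to U$ sending $v$ to the appropriately completed tuple built from the quasi-harmonic extension $\widetilde u$ solving $-\eps\Delta\widetilde u + \widetilde u = 0$, $\widetilde u|_\Gamma = v$, that $\ext$ is a right-inverse of $\gamma_0$, and that $B$ is inf-sup stable on $U/\ker(B)$ with $\eps$-independent constant (this last point is exactly the content of \cite{HeuerK_RDM}, whose norms have been rescaled here only by equivalent constants). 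These yield, as in the unperturbed case, the bound $\norm{\uu-\ext\gamma_0\uu}{U,1}^2 \lesssim \norm{B\uu}{V'}^2$. One also needs the analogue of Lemma~\ref{lem:dpg:Hm12estimate}, controlling $\norm{\gamma_\nn(\uu-\ext\gamma_0\uu)}{H^{-1/2}(\Gamma)}$ by $\norm{B\uu}{V'}$ up to a constant: here the dual problem \eqref{eq:dpg:dualestimate} gets replaced by its $\eps$-weighted version $\eps\div\ttau + \text{(lower order)} = 0$ etc., solved so that the test function $\vv=(\mu,\ttau,v)$ has $\norm{\vv}{V}$ comparable to $\norm{\widehat v}{H^{1/2}(\Gamma)}$ — this is precisely why the authors remarked that the extra term $\eps\norm{\pwnabla v}{}^2$ was added to the $V$-norm, to keep that comparison constant controlled (they want the factor that feeds into the choice $\beta\ge 3$).

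With these ingredients, coercivity follows the same Young's-inequality argument: since $a^s(\uu,\uu) = \beta\norm{B\uu}{V'}^2 + \eps^{1/4}\dual{\gamma_\nn\uu}{\gamma_0\uu}_\Gamma$, I would write $\eps^{-1/2}\norm{\gamma_0\uu}{H^{1/2}(\Gamma)}^2 = \eps^{1/4}\dual{\gamma_\nn\ext\gamma_0\uu}{\gamma_0\uu}_\Gamma$ (integration by parts in the $\eps$-weighted extension, noting the balanced scaling makes $\eps^{1/4}$ the right factor), split off $\eps^{1/4}\dual{\gamma_\nn\ext\gamma_0\uu - \gamma_\nn\uu}{\gamma_0\uu}_\Gamma$, bound it by $\eps^{1/4}\sqrt{2}\norm{B\uu}{V'}\,\eps^{-1/4}\norm{\gamma_0\uu}{H^{1/2}(\Gamma)}\cdot\eps^{1/4}$ — more precisely, pair the $H^{-1/2}$ bound from the $\eps$-weighted Lemma with $\norm{\gamma_0\uu}{H^{1/2}(\Gamma)}$ and absorb powers of $\eps$ so that Young's inequality produces $\delta^{-1}\norm{B\uu}{V'}^2 + \tfrac\delta2 \eps^{-1/2}\norm{\gamma_0\uu}{H^{1/2}(\Gamma)}^2$; choosing $\delta=1$ and $\beta\ge 3$ gives $\enorm{\uu}^2 \le C_1 a^s(\uu,\uu)$ with $C_1$ independent of $\eps$, $\TT$, $\beta$. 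Boundedness of $a^s$ in $\enorm{\cdot}$ is routine: $|b(\uu,\Theta_\beta\vv)| \le \beta C_2^2\norm{B\uu}{V'}\norm{B\vv}{V'}$ (here $C_2$ is the $\eps$-uniform continuity constant of $b$, again from \cite{HeuerK_RDM} and the norm equivalence), and the boundary pairing is bounded via $|\eps^{1/4}\dual{\gamma_\nn\uu}{\gamma_0\vv}_\Gamma| \le \eps^{-1/2}\norm{\gamma_0\uu}{H^{1/2}(\Gamma)}\norm{\gamma_0\vv}{H^{1/2}(\Gamma)}$ after inserting $\gamma_\nn\uu = \gamma_\nn(\uu-\ext\gamma_0\uu) + \gamma_\nn\ext\gamma_0\uu$ and using the weighted Lemma~\ref{lem:dpg:Hm12estimate}-analogue plus the trace-of-extension identity; all powers of $\eps$ cancel correctly because of the balanced scaling.

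The norm equivalence \eqref{eq:sp:normequiv} is proved in two pieces. The lower bound $\norm{\uu}{U,1}\lesssim\enorm{\uu}$ comes from $\norm{\uu}{U,1}^2 \lesssim \norm{\uu-\ext\gamma_0\uu}{U,1}^2 + \norm{\ext\gamma_0\uu}{U,1}^2 \lesssim \norm{B\uu}{V'}^2 + \norm{\gamma_0\uu}{H^{1/2}(\Gamma)}^2$, where the second summand is estimated by checking each skeleton component of $\ext\gamma_0\uu$ against the $\eps$-weighted norms, using $\eps\Delta\widetilde u = \widetilde u$ to replace Laplacian terms — this is the $\eps$-weighted mimic of the "$\norm{\ext v}{U}\le\sqrt3\norm{v}{H^{1/2}(\Gamma)}$" computation in Lemma~\ref{lem:kerBext}, and one finds $\norm{\gamma_0\uu}{H^{1/2}(\Gamma)}^2 \simeq \eps^{-1/2}\cdot(\text{that weaker }H^{1/2}\text{-norm})$ matches up to the $\enorm{\cdot}$ weight. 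The upper bound $\enorm{\uu}\lesssim\norm{\uu}{U,2}$ uses boundedness of $B$ (giving $\norm{B\uu}{V'}\lesssim\norm{\uu}{U,2}$) and boundedness of $\gamma_0$ into $(H^{1/2}(\Gamma),\eps^{-1/2}\norm{\cdot}{H^{1/2}(\Gamma)}^2)$, which reduces to the inequality $\norm{\cdot}{H^{1/2}(\Gamma)}\le\norm{\cdot}{1/2,\Gamma}$ already noted. Finally, with coercivity and boundedness in hand, the Lions–Stampacchia theorem gives unique solvability of \eqref{eq:sp:varineq}; the equivalence with \eqref{eq:intro:sigpde} and the identity $b(\uu,\Theta_\beta\ww)=F(\ww)$ follow exactly as in the unperturbed case — one direction by construction (the solution of the PDE yields an admissible tuple satisfying the variational inequality via \eqref{eq:intro:bcalt}), the other by uniqueness, and the last identity either by construction or, as for $\star=0$ earlier, by testing with $\uu\pm(\ww-\ext\gamma_0\ww)$. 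I expect the main obstacle to be bookkeeping of the $\eps$-powers: making sure that the weighted dual problem in the analogue of Lemma~\ref{lem:dpg:dualestimate} is scaled so the resulting constant, combined with the $\eps^{1/4}$ in front of the boundary form, lands the coercivity threshold at $\beta\ge 3$ rather than something $\eps$-dependent — this is delicate precisely because it motivated the modification of the $V$-norm, and it is where I would spend the most care.
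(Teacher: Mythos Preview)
Your plan is essentially the paper's own proof: the $\eps$-weighted analogues of Lemmas~\ref{lem:kerBext} and~\ref{lem:dpg:Hm12estimate} (the paper's Lemmas~\ref{lem:sp:kerBext} and~\ref{lem:sp:Hm12estimate}), the identity $\eps^{-1/2}\norm{\gamma_0\uu}{H^{1/2}(\Gamma)}^2 = \eps^{1/4}\dual{\gamma_\nn\ext\gamma_0\uu}{\gamma_0\uu}_\Gamma$ plus Young's inequality for coercivity, Cauchy--Schwarz on $b(\cdot,\Theta\cdot)$ together with the same splitting $\gamma_\nn\uu = \gamma_\nn(\uu-\ext\gamma_0\uu)+\gamma_\nn\ext\gamma_0\uu$ for boundedness, and Lions--Stampacchia to conclude.

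Two small corrections on the bookkeeping you flagged as delicate. First, the dual problem behind the Neumann-trace estimate is not a mere $\eps$-rescaling of~\eqref{eq:dpg:dualestimate}: because $V$ now carries the $H^1(\Delta,\TT)$ component, the adjoint system is a genuine three-equation problem for $(\mu,\ttau,v)$ with a fourth-order structure in $v$ (the paper's Lemma~\ref{lem:sp:dualproblem}), and its stability constant comes out as $3/\sqrt{2}$, not $\sqrt{2}$. Second, with that constant your choice $\delta=1$ gives only $\beta\ge 13/4$; the paper takes $\delta=9/8$ to land exactly at $\beta\ge 3$.
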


We prove this result in Section~\ref{sec:sp:proof}.

\subsection{Discretization, convergence and a posteriori error estimate} \label{sec:sp:disc}

We replace $U$ by the lowest order subspace
\begin{align*}
  U_h := P^0(\TT) \times [P^0(\TT)]^d \times P^0(\TT) \times S^1(\cS) \times S^1(\cS) \times 
  P^0(\cS) \times P^0(\cS)
\end{align*}
and $K^s$ by
\begin{align*}
  K_h^s := \set{\vv_h\in U_h}{\gamma_0\vv_h\geq 0,\gamma_\nn\vv_h\geq 0}.
\end{align*}
The discrete version of~\eqref{eq:sp:varineq} then reads: Find $\uu_h\in K_h^s$ such that
\begin{align}\label{eq:sp:varineqdisc}
  a^s(\uu_h,\vv_h-\uu_h) \geq F(\vv_h-\uu_h) \quad\text{for all } \vv_h \in K_h^s.
\end{align}
With the same arguments as in Section~\ref{sec:dpg:disc} we can prove unique solvability.

\begin{theorem} \label{thm:sp:disc}
  Under the same assumptions as in Theorem~\ref{thm:sp:main} the discrete variational
  inequality~\eqref{eq:sp:varineqdisc} admits a unique solution $\uu_h\in K_h^s$.
\end{theorem}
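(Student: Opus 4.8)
The plan is to mirror exactly the reasoning used for Theorem~\ref{thm:dpg:disc}, since the discrete problem \eqref{eq:sp:varineqdisc} differs from the continuous one \eqref{eq:sp:varineq} only by restricting the trial space $U$ to the finite-dimensional subspace $U_h$ and the convex cone $K^s$ to $K_h^s$. First I would observe that $K_h^s$ is a non-empty, closed, convex subset of $U_h$ (hence of $U$): non-emptiness follows since $\mathbf{0}\in K_h^s$, convexity and closedness are inherited from the fact that $\gamma_0$ and $\gamma_\nn$ are linear and continuous and the cones $\{\widehat v\geq 0\}$, $\{\widehat\sigma\geq 0\}$ are closed and convex (as in Lemma~\ref{la_convex}). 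Since $U_h$ is finite-dimensional, $K_h^s$ is in particular a closed convex subset of the Hilbert space $U$.

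Next I would invoke Theorem~\ref{thm:sp:main}, which gives coercivity and boundedness of $a^s(\cdot,\cdot)$ on the \emph{entire} space $U\times U$ (with respect to $\enorm{\cdot}$, which by \eqref{eq:sp:normequiv} is equivalent on $U_h$ to any norm since $U_h$ is finite-dimensional; in fact coercivity and boundedness already hold on all of $U$ irrespective of discretization). In particular $a^s$ restricted to $U_h\times U_h$ is still coercive and bounded. The functional $F(\vv)=L(\Theta_\beta\vv)$ is linear and continuous on $U$ (hence on $U_h$), because $\Theta_\beta:U\to V$ is bounded and $L:V\to\R$ is bounded. Therefore all hypotheses of the Lions--Stampacchia theorem (see, e.g.,~\cite{GlowinskiLT_81_NAV,Glowinski_08_NMN}) are met for the triple $(U, K_h^s, a^s, F)$: a coercive bounded bilinear form and a continuous linear functional on a closed convex subset of a Hilbert space. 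This yields existence and uniqueness of $\uu_h\in K_h^s$ solving \eqref{eq:sp:varineqdisc}.

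There is essentially no obstacle here; the statement is a direct corollary, and the only point requiring a word of care is that the coercivity and boundedness constants in Theorem~\ref{thm:sp:main} are independent of $\TT$ and $\eps$, so the Lions--Stampacchia argument applies uniformly in the mesh and the perturbation parameter — but for the bare well-posedness assertion of Theorem~\ref{thm:sp:disc} one only needs coercivity and boundedness to hold, not uniformity. I would write the proof in two or three sentences: state that $K_h^s$ is a non-empty closed convex subset of $U$, recall from Theorem~\ref{thm:sp:main} that $a^s$ is coercive and bounded on $U$ and $F$ is a continuous linear functional, and conclude by the Lions--Stampacchia theorem, exactly as in the proof indicated for Theorem~\ref{thm:dpg:disc} in Section~\ref{sec:dpg:disc}.
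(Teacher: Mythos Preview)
Your proposal is correct and matches the paper's approach exactly: the paper simply notes that ``with the same arguments as in Section~\ref{sec:dpg:disc}'' one obtains unique solvability, i.e., coercivity and boundedness of $a^s(\cdot,\cdot)$ on all of $U$ from Theorem~\ref{thm:sp:main} plus the Lions--Stampacchia theorem applied to the non-empty closed convex set $K_h^s\subset U$. Your additional remarks on verifying that $K_h^s$ is non-empty, closed, and convex, and that $F$ is continuous, are appropriate details that the paper leaves implicit.
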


We have the following \emph{robust} quasi-optimal a priori error estimate.
Here, robustness means that the hidden constant does not depend on the positive perturbation
parameter $\eps$ (though, for simplicity we have assumed that $\eps\le 1$).

\begin{theorem} \label{thm:sp:apriori}
  For $\beta\ge 3$ let $\uu\in K^s$, $\uu_h\in K_h^s$ 
  denote the exact solutions of~\eqref{eq:sp:varineq}, \eqref{eq:sp:varineqdisc}.
  Then there holds 
  \begin{align*} 
    \norm{\uu-\uu_h}{U,1}^2 \lesssim 
    \inf_{\vv_h\in K_h^s} \left( \norm{\uu-\vv_h}{U,2}^2 
      + \tfrac12\eps^{1/4}(\dual{\gamma_\nn\uu}{\gamma_0(\vv_h-\uu)}_\Gamma 
      + \dual{\gamma_\nn(\vv_h-\uu)}{\gamma_0\uu}_\Gamma)\right).
  \end{align*}
  The generic constant does depend on $\Omega$ and $\beta$ but not on $\TT$ or $\eps$.
\end{theorem}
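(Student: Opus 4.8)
The plan is to mirror the proof of Theorem~\ref{thm:dpg:apriori}, that is, to run Falk's lemma~\cite{Falk_74_EEA}, the only additional care being the bookkeeping of the two trial norms through the sandwich estimate~\eqref{eq:sp:normequiv}. First I would invoke the coercivity from Theorem~\ref{thm:sp:main} together with $\norm{\cdot}{U,1}\lesssim\enorm{\cdot}$ to obtain, for $\beta\ge 3$,
\begin{align*}
  \norm{\uu-\uu_h}{U,1}^2 \lesssim \enorm{\uu-\uu_h}^2 \lesssim a^s(\uu-\uu_h,\uu-\uu_h).
\end{align*}
Since $K_h^s\subseteq K^s$, for an arbitrary $\vv_h\in K_h^s$ I would split $a^s(\uu-\uu_h,\uu-\uu_h)=a^s(\uu-\uu_h,\uu-\vv_h)+a^s(\uu-\uu_h,\vv_h-\uu_h)$ and estimate the first summand with the boundedness from Theorem~\ref{thm:sp:main} and, once more,~\eqref{eq:sp:normequiv}:
\begin{align*}
  a^s(\uu-\uu_h,\uu-\vv_h)\le C_2\,\enorm{\uu-\uu_h}\,\enorm{\uu-\vv_h}\lesssim\enorm{\uu-\uu_h}\,\norm{\uu-\vv_h}{U,2}.
\end{align*}

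For the second summand I would use the symmetric structure $a^s(\uu,\ww)=b(\uu,\Theta_\beta\ww)+\tfrac12\eps^{1/4}(\dual{\gamma_\nn\uu}{\gamma_0\ww}_\Gamma+\dual{\gamma_\nn\ww}{\gamma_0\uu}_\Gamma)$ together with the exact-solution identity $b(\uu,\Theta_\beta\ww)=F(\ww)$ for all $\ww\in U$ from Theorem~\ref{thm:sp:main}, applied with $\ww=\vv_h-\uu_h$, and then subtract the discrete variational inequality~\eqref{eq:sp:varineqdisc} tested with $\vv_h$, i.e.\ $a^s(\uu_h,\vv_h-\uu_h)\ge F(\vv_h-\uu_h)$. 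This produces
\begin{align*}
  a^s(\uu-\uu_h,\vv_h-\uu_h)\le\tfrac12\eps^{1/4}\bigl(\dual{\gamma_\nn\uu}{\gamma_0(\vv_h-\uu_h)}_\Gamma+\dual{\gamma_\nn(\vv_h-\uu_h)}{\gamma_0\uu}_\Gamma\bigr),
\end{align*}
where the weight $\eps^{1/4}$ passes through unchanged since it enters $a^s$ and the asserted bound identically.

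It then remains to replace the boundary pairings by their $\vv_h-\uu$ counterparts. The exact solution satisfies $\gamma_0\uu\ge 0$, $\gamma_\nn\uu\ge 0$ and $\dual{\gamma_\nn\uu}{\gamma_0\uu}_\Gamma=0$, while $\uu_h\in K_h^s$ gives $\gamma_0\uu_h\ge 0$ and $\gamma_\nn\uu_h\ge 0$; hence $\dual{\gamma_\nn\uu}{\gamma_0\uu_h}_\Gamma\ge 0$ and $\dual{\gamma_\nn\uu_h}{\gamma_0\uu}_\Gamma\ge 0$, so that $\dual{\gamma_\nn\uu}{\gamma_0(\vv_h-\uu_h)}_\Gamma\le\dual{\gamma_\nn\uu}{\gamma_0(\vv_h-\uu)}_\Gamma$ and $\dual{\gamma_\nn(\vv_h-\uu_h)}{\gamma_0\uu}_\Gamma\le\dual{\gamma_\nn(\vv_h-\uu)}{\gamma_0\uu}_\Gamma$. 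Collecting the estimates, applying Young's inequality with a small parameter $\delta>0$ to absorb $\tfrac\delta2\enorm{\uu-\uu_h}^2$ into the left-hand side through coercivity, then using $\norm{\uu-\uu_h}{U,1}^2\lesssim\enorm{\uu-\uu_h}^2$ once more, and finally taking the infimum over $\vv_h\in K_h^s$ yields the claim.

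I do not anticipate a genuine obstacle here: the argument is structurally identical to Theorem~\ref{thm:dpg:apriori}, and robustness in $\eps$ is inherited wholesale from Theorem~\ref{thm:sp:main}. The only delicate point is the consistency of the $\eps$-powers — the left-hand side is controllable only in the weaker norm $\norm{\cdot}{U,1}$, whereas the best-approximation term naturally appears in the stronger norm $\norm{\cdot}{U,2}$, and these are matched precisely by the sandwich $\norm{\cdot}{U,1}\lesssim\enorm{\cdot}\lesssim\norm{\cdot}{U,2}$ of Theorem~\ref{thm:sp:main}; keeping track of this, together with the $\eps^{1/4}$ boundary weight, is essentially the entire work.
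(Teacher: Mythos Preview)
Your proposal is correct and follows essentially the same route as the paper: the paper's proof simply says to rerun the Falk-lemma argument of Theorem~\ref{thm:dpg:apriori} with $\norm{\cdot}U$ replaced by $\enorm{\cdot}$ and then to apply the sandwich~\eqref{eq:sp:normequiv}, which is precisely what you spell out in detail.
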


\begin{proof}
Following the proof of Theorem~\ref{thm:dpg:apriori} we obtain (by replacing $\norm{\cdot}U$ with $\enorm\cdot$)
\begin{align*}
  \enorm{\uu-\uu_h}^2 \lesssim 
    \inf_{\vv_h\in K_h^s} \left( \enorm{\uu-\vv_h}^2 
      + \tfrac12\eps^{1/4}(\dual{\gamma_\nn\uu}{\gamma_0(\vv_h-\uu)}_\Gamma 
      + \dual{\gamma_\nn(\vv_h-\uu)}{\gamma_0\uu}_\Gamma)\right).
\end{align*}
Then,~\eqref{eq:sp:normequiv} proves the error bound.
\end{proof}

The derivation of an a posteriori error estimate is analogous to Section~\ref{sec:aposteriori}.
For a function $\uu_h\in K_h^s$ we define local error indicators by
\begin{align*}
  \eta(T)^2 &:= \beta \norm{R_T^{-1} \iota_T^*(L-B\uu_h)}{V(T)}^2, \\
  \eta(E)^2 &:= \eps^{1/4} \dual{\gamma_\nn\uu_h}{\gamma_0\uu_h}_E,
\end{align*}
and the overall estimator
\begin{align*}
  \eta^2 := \sum_{T\in\TT} \eta(T)^2 + \sum_{E\in\cS_\Gamma} \eta(E)^2.
\end{align*}
Here, $V(T) := H^1(T) \times \HH(\div,T) \times H^1(\Delta,T)$ is equipped with the norm
\begin{align*}
  \norm{(\mu,\ttau,v)}{V(T)}^2 &:= \eps^{-1}\norm{\mu}{T}^2 + \norm{\nabla\mu}{T}^2 + \eps^{-1/2} \norm{\ttau}{T}^2 
  + \norm{\div\ttau}{T}^2 \\
  &\qquad\qquad + \norm{v}{T}^2 + (\eps^{1/2}+\eps)\norm{\nabla v}{T}^2 + \eps^{3/2}\norm{\Delta v}{T}^2,
\end{align*}
$R_T:\;V(T) \to (V(T))'$ denotes the Riesz isomorphism,
and $\iota_T^*$ is the dual of the canonical embedding $\iota_T:\;V(T)\to V$.

Analogously to the proof of Theorem~\ref{thm:dpg:aposteriori}, in conjunction with~\eqref{eq:sp:normequiv},
we obtain the following a posteriori estimate.
Like the a priori estimate from Theorem~\ref{thm:sp:apriori}, the a posteriori estimate
is \emph{robust} with respect to $\eps$ ($0<\eps\le 1$).

\begin{theorem} \label{thm:sp:aposteriori}
  For $\beta\ge 3$ let $\uu\in K^s$ and $\uu_h\in K_h^s$ be the solutions
  of~\eqref{eq:sp:varineq} and~\eqref{eq:sp:varineqdisc}, respectively. Then, there holds the reliability estimate
  \begin{align*} 
    \enorm{\uu-\uu_h}  \leq C_\mathrm{rel} \eta.
  \end{align*}
  The constant $C_\mathrm{rel}>0$ is independent of $\Omega$, $\TT$, $\beta$ and $\eps$. 
  In particular, with~\eqref{eq:sp:normequiv} we have
  \begin{align*}
    \norm{\uu-\uu_h}{U,1} \leq C_{\mathrm{rel},U} \eta,
  \end{align*}
  where $C_{\mathrm{rel},U} := C_\mathrm{rel} C_1$ and $C_1$ depends only on $\Omega$.
\end{theorem}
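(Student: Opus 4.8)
The plan is to mirror the proof of Theorem~\ref{thm:dpg:aposteriori} step by step, replacing the $U$-norm by the energy-based norm $\enorm\cdot$ and keeping track of the $\eps^{1/4}$-scaling in the boundary term, and then to transfer the bound to $\norm{\cdot}{U,1}$ via the norm equivalence~\eqref{eq:sp:normequiv}. First I would invoke the coercivity part of Theorem~\ref{thm:sp:main} (valid for $\beta\ge 3$) and use that $b(\ww,\Theta_\beta\ww)=\beta\norm{B\ww}{V'}^2$ together with the symmetrized boundary term in $a^s$, to obtain
\begin{align*}
  \enorm{\uu-\uu_h}^2 \;\lesssim\; a^s(\uu-\uu_h,\uu-\uu_h)
  \;=\; \beta\norm{B(\uu-\uu_h)}{V'}^2 + \eps^{1/4}\dual{\gamma_\nn(\uu-\uu_h)}{\gamma_0(\uu-\uu_h)}_\Gamma .
\end{align*}

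Second, I would estimate the boundary contribution. Expanding the duality pairing and using $\uu\in K^s$, $\uu_h\in K_h^s\subseteq K^s$ (so that $\dual{\gamma_\nn\uu}{\gamma_0\uu_h}_\Gamma\ge0$ and $\dual{\gamma_\nn\uu_h}{\gamma_0\uu}_\Gamma\ge0$) together with the complementarity condition $\gamma_0\uu\,\gamma_\nn\uu=0$ for the exact solution (from the equivalence in Theorem~\ref{thm:sp:main}), I get
\begin{align*}
  \eps^{1/4}\dual{\gamma_\nn(\uu-\uu_h)}{\gamma_0(\uu-\uu_h)}_\Gamma
  \;\le\; \eps^{1/4}\dual{\gamma_\nn\uu_h}{\gamma_0\uu_h}_\Gamma
  \;=\; \sum_{E\in\cS_\Gamma}\eta(E)^2 .
\end{align*}
Third, since $B\uu=L$ we have $\norm{B(\uu-\uu_h)}{V'}=\norm{L-B\uu_h}{V'}$, and I would localize this residual: because $V=\prod_{T\in\TT}V(T)$ is a broken space with $\norm{\vv}{V}^2=\sum_{T\in\TT}\norm{\vv|_T}{V(T)}^2$ (using exactly the local norms $\norm{\cdot}{V(T)}$ defined before the theorem), the dual norm splits additively, $\norm{L-B\uu_h}{V'}^2=\sum_{T\in\TT}\norm{R_T^{-1}\iota_T^*(L-B\uu_h)}{V(T)}^2$, hence $\beta\norm{L-B\uu_h}{V'}^2=\sum_{T\in\TT}\eta(T)^2$. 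Combining the three steps yields $\enorm{\uu-\uu_h}^2\lesssim\eta^2$, i.e.\ the first assertion with $C_\mathrm{rel}$ depending only on the coercivity constant of Theorem~\ref{thm:sp:main}; the inequality $\norm{\uu-\uu_h}{U,1}\lesssim\enorm{\uu-\uu_h}$ from~\eqref{eq:sp:normequiv} then gives the second assertion with $C_{\mathrm{rel},U}=C_\mathrm{rel}C_1$.

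There is no deep obstacle here once Theorem~\ref{thm:sp:main} is available; the work is bookkeeping. The one point deserving care is robustness: I would verify that every inequality used carries a constant independent of $\eps$ and of $\TT$ — the coercivity constant $C_1$ of Theorem~\ref{thm:sp:main} is $\eps$-independent, the cone sign conditions introduce no constant, and the additive localization of $\norm{\cdot}{V'}$ over the element spaces $V(T)$ is \emph{exact} precisely because the chosen $V$-norm is the Hilbertian sum of the local $V(T)$-norms, so no mesh- or $\eps$-dependent equivalence constants enter. This exactness is where the slightly modified test norm (with the extra $\eps\norm{\pwnabla v}{}^2$ term, cf.\ the discussion preceding Theorem~\ref{thm:sp:main}) is convenient, since it keeps the constant in Lemma~\ref{lem:sp:Hm12estimate} — and hence in the coercivity estimate — clean.
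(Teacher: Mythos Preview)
Your proposal is correct and follows essentially the same approach as the paper, which merely states that the proof is analogous to that of Theorem~\ref{thm:dpg:aposteriori} combined with the norm equivalence~\eqref{eq:sp:normequiv}. One small remark: the additive localization of $\norm{\cdot}{V'}$ is exact simply because the $V$-norm is defined as a broken (element-wise) Hilbertian sum, independently of the extra $\eps\norm{\pwnabla v}{}^2$ term; that term is included only to sharpen the constant in Lemma~\ref{lem:sp:Hm12estimate} (and thus the coercivity constant), not to make the localization work.
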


\subsection{Technical details} \label{sec:sp:tec}

Analogously as in Lemma~\ref{lem:trace}, we obtain boundedness of the trace operators.
In this case we use that, by definition of the norms,
$\norm{\widehat u^a|_\Gamma}{1/2,\Gamma}\leq \norm{\widehat u^a}{1/2,\cS}$ and
$\norm{\widehat\sigma^a|_\Gamma}{-1/2,\Gamma}\leq \norm{\widehat\sigma^a}{-1/2,\cS}$.

\begin{lemma} \label{lem:sp:trace}
  The operators $\gamma_0 : \big(U,\norm{\cdot}{U,2}\big) \to \big(H^{1/2}(\Gamma),\norm\cdot{1/2,\Gamma}\big)$ and 
  $\gamma_\nn : \big(U,\norm{\cdot}{U,2}\big) \to \big( H^{-1/2}(\Gamma),\norm\cdot{-1/2,\Gamma}\big)$ are bounded
  with constant $1$.
\end{lemma}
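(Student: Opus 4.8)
The plan is to mimic the proof of Lemma~\ref{lem:trace} almost verbatim, with the only real content being a careful bookkeeping of the $\eps$-weights appearing in the various norms. Recall that $\gamma_0\uu = \widehat u^a|_\Gamma$ and $\gamma_\nn\uu = \widehat\sigma^a|_\Gamma$, so only the two ``$a$''-components of $\uu$ enter. First I would note that the two inequalities singled out just before the statement,
\begin{align*}
  \norm{\widehat u^a|_\Gamma}{1/2,\Gamma}\leq \norm{\widehat u^a}{1/2,\cS}, \qquad
  \norm{\widehat\sigma^a|_\Gamma}{-1/2,\Gamma}\leq \norm{\widehat\sigma^a}{-1/2,\cS},
\end{align*}
are themselves immediate from the very definitions of these norms as minimum-energy extensions: any competitor $w\in H^1(\Omega)$ (resp.\ $\qq\in\HH(\div,\Omega)$) admissible in the infimum defining $\norm{\widehat u^a}{1/2,\cS}$ (i.e.\ matching $\widehat u^a$ on every $\partial\el$) is in particular admissible in the infimum defining $\norm{\widehat u^a|_\Gamma}{1/2,\Gamma}$ (i.e.\ matching $\widehat u^a|_\Gamma$ on $\Gamma$), so the latter infimum is over a larger set and is therefore no larger. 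The well-posedness of the restriction $\widehat u^a|_\Gamma$ itself is exactly~\eqref{trace}, and of $\widehat\sigma^a|_\Gamma$ is exactly~\eqref{ntrace}, so these objects are meaningful.

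For $\gamma_0$: using the above together with the definition of $\norm{\cdot}{U,2}$, which contains the term $\norm{\widehat u^a}{1/2,\cS}^2$ with coefficient $1$, I get
\begin{align*}
  \norm{\gamma_0\uu}{1/2,\Gamma}^2 = \norm{\widehat u^a|_\Gamma}{1/2,\Gamma}^2 \le \norm{\widehat u^a}{1/2,\cS}^2 \le \norm{\uu}{U,2}^2,
\end{align*}
which is boundedness with constant $1$. For $\gamma_\nn$: the term $\norm{\widehat\sigma^a}{-1/2,\cS}^2$ also appears in $\norm{\cdot}{U,2}^2$ with coefficient $1$, so in the same way
\begin{align*}
  \norm{\gamma_\nn\uu}{-1/2,\Gamma}^2 = \norm{\widehat\sigma^a|_\Gamma}{-1/2,\Gamma}^2 \le \norm{\widehat\sigma^a}{-1/2,\cS}^2 \le \norm{\uu}{U,2}^2.
\end{align*}
The point to check carefully — and what I expect to be the only genuine obstacle — is that the $\eps$-scalings inside the boundary norms $\norm{\cdot}{1/2,\Gamma}$, $\norm{\cdot}{-1/2,\Gamma}$ match exactly those used inside the skeleton norms $\norm{\cdot}{1/2,\cS}$, $\norm{\cdot}{-1/2,\cS}$ (both use $\norm{w}{}^2+\eps^{1/2}\norm{\nabla w}{}^2$ for the $H^{1/2}$ case and $\norm{\qq}{}^2+\eps\norm{\div\qq}{}^2$ for the $H^{-1/2}$ case), so that the comparison of infima over nested admissible sets goes through with no stray $\eps$-factor; inspecting the definitions in \S\ref{sec:sp:notation} confirms this is the case. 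Hence both operators are bounded with constant $1$ as claimed, and the proof is complete.
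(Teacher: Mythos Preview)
Your proof is correct and follows essentially the same approach as the paper: the paper also reduces the claim to the two inequalities $\norm{\widehat u^a|_\Gamma}{1/2,\Gamma}\leq \norm{\widehat u^a}{1/2,\cS}$ and $\norm{\widehat\sigma^a|_\Gamma}{-1/2,\Gamma}\leq \norm{\widehat\sigma^a}{-1/2,\cS}$, which hold by definition of the norms, and then invokes the argument of Lemma~\ref{lem:trace}. Your additional remark that the $\eps$-weights in the skeleton and boundary norms match exactly, and that the relevant skeleton terms enter $\norm{\cdot}{U,2}$ with coefficient~$1$, makes the constant~$1$ explicit.
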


We now adapt the definition of the previously employed extension operator $\ext$ to the current situation.

For a function $\widehat v\in H^{1/2}(\Gamma)$ we define its quasi-harmonic extension
$\widetilde u \in H^1(\Omega)$ as the unique solution of
\begin{align}\label{eq:sp:harmext}
  -\eps \Delta \widetilde u + \widetilde u &=0 \quad\text{in }\Omega, \qquad
  \widetilde u|_\Gamma = \widehat v,
\end{align}
and define $\ext : H^{1/2}(\Gamma) \to U$ by
\begin{align*}
  \ext \widehat v := (\widetilde u, \eps^{1/4}\nabla \widetilde u, \eps^{1/4} \Delta \widetilde u, 
  \widetilde u|_{\cS}, \widetilde u|_{\cS}, \eps^{1/4} (\nabla\widetilde u\cdot\nn_T|_{\partial T})_{T\in\TT},
  \eps^{1/4} (\nabla\widetilde u\cdot\nn_T|_{\partial T})_{T\in\TT}).
\end{align*}
This operator characterizes the kernel of $B$.
Note that, in contrast to $\norm\cdot{1/2,\Gamma}$, the norm $\norm\cdot{H^{1/2}(\Gamma)}$
(recall the definitions in Section~\ref{sec:sp:notation})
is inherited from the energy norm associated to problem~\eqref{eq:sp:harmext}.

\begin{lemma}\label{lem:H12normequiv}
  For given $\widehat v\in H^{1/2}(\Gamma)$ let $\widetilde u \in H^1(\Omega)$ be the unique solution
  of~\eqref{eq:sp:harmext}. Then,
  \begin{align*}
    \norm{\widehat v}{H^{1/2}(\Gamma)}^2 = \eps\norm{\nabla \widetilde u}{}^2 + \norm{\widetilde u}{}^2 
    = \eps \dual{\partial_\nn \widetilde u}{\widehat v}_\Gamma = \eps^{3/4}\dual{\gamma_\nn \ext \widehat v}{\widehat v}_\Gamma.
  \end{align*}
\end{lemma}
\begin{proof}
  The last identity follows by definition of the operator $\ext$. Using the weak formulation of
  problem~\eqref{eq:sp:harmext} we have
  \begin{align*}
    \eps\norm{\nabla \widetilde u}{}^2 + \norm{\widetilde u}{}^2 &= \eps\dual{\partial_\nn\widetilde u}{\widehat v}_\Gamma 
    = \eps\ip{\nabla \widetilde u}{\nabla w} + \ip{\widetilde u}{w} \\
    &\leq \Big( \eps\norm{\nabla \widetilde u}{}^2 + \norm{\widetilde u}{}^2\Big)^{1/2} 
    \Big( \eps\norm{\nabla w}{}^2 + \norm{w}{}^2\Big)^{1/2}
  \end{align*}
  for all $w\in H^1(\Omega)$ with $w|_\Gamma = \widehat v$. 
  Thus, $\norm{\widehat v}{H^{1/2}(\Gamma)}^2 = \eps\norm{\nabla \widetilde u}{}^2 + \norm{\widetilde u}{}^2$.
\end{proof}

Similarly to Lemma~\ref{lem:kerBext} there holds the following result in the singularly perturbed case.

\begin{lemma}\label{lem:sp:kerBext}
  The operators $B : U \to V'$, $\ext: H^{1/2}(\Gamma)\to U$ have the following properties:
  \begin{enumerate}[(i)]
    \item $B$ and $\ext$ are bounded,
      \begin{align*}
        \norm{B\uu}{V'} &\lesssim \norm{\uu}{U,2} \quad\text{for all }\uu\in U, \qquad
        \norm{\ext \widehat v}{U,1} \simeq \eps^{-1/4} \norm{\widehat v}{H^{1/2}(\Gamma)} 
        \quad\text{for all } \widehat v \in H^{1/2}(\Gamma).
      \end{align*}
      The generic constants are independent of $\TT$ and $\eps$.
    \item\label{lem:sp:kerBext:b} The kernel of $B$ consists of all quasi-harmonic extensions,
                                  i.e., $\ker(B) = \ran(\ext)$.
    \item\label{lem:sp:kerBext:c} $\ext$ is a right-inverse of $\gamma_0$.
    \item\label{lem:sp:kerBext:d} $B:\;U/\ker(B)\to V'$ is inf-sup stable,
      \begin{align*}
        \norm{\uu-\ext\gamma_0\uu}{U,1} \lesssim \norm{B\uu}{V'} = \sup_{\vv\in V} \frac{\dual{B\uu}{\vv}}{\norm{\vv}{V}}
        = b(\uu,\Theta\uu)^{1/2} \quad\text{for all } \uu\in U.
      \end{align*}
      The generic constant depends on $\Omega$ but not on $\TT$ or $\eps$.
  \end{enumerate}
\end{lemma}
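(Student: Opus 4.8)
The plan is to mirror the proof of Lemma~\ref{lem:kerBext}, importing the $\eps$-robust mapping properties of $B$ from the DPG analysis of reaction-dominated diffusion in~\cite{HeuerK_RDM} (see also the DPG--BEM coupling~\cite{FuehrerH_RCD}) and computing the properties of $\ext$ directly. Since the test-space norm used here differs from the one in~\cite{HeuerK_RDM} only by the harmless extra term $\eps\norm{\pwnabla v}{}^2$, the two $V$-norms are equivalent; hence boundedness of $B$, i.e.\ $\norm{B\uu}{V'}\lesssim\norm{\uu}{U,2}$ with a constant independent of $\TT$ and $\eps$, is exactly the boundedness estimate of~\cite{HeuerK_RDM} (alternatively, term-by-term Cauchy--Schwarz using the $\eps$-weights built into $\norm\cdot{U,2}$ and $\norm\cdot V$). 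From the same source, with this norm identification, I would take the kernel-free inf-sup bound
\[
  \norm{\uu_0}{U,1}\lesssim\norm{B\uu_0}{V'}\quad\text{for all }\uu_0\in U\text{ with }\gamma_0\uu_0=0,
\]
which is the substantial ingredient (in~\cite{HeuerK_RDM} it is the well-posedness of the DPG method for the corresponding Dirichlet problem).

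Property~\eqref{lem:sp:kerBext:c} is immediate from the definitions of $\ext$ and of the restriction to $\Gamma$: $\gamma_0\ext\widehat v=\widetilde u|_\Gamma=\widehat v$. For the equivalence for $\ext$ in~(i), let $\widetilde u$ solve~\eqref{eq:sp:harmext}, so that $\Delta\widetilde u=\eps^{-1}\widetilde u$. Evaluating $\norm{\ext\widehat v}{U,1}^2$ term by term — bounding the skeleton components from above by using $\widetilde u$ itself as an admissible $H^1(\Omega)$-extension and $\eps^{1/4}\nabla\widetilde u\in\HH(\div,\Omega)$ (note $\div(\eps^{1/4}\nabla\widetilde u)=\eps^{-3/4}\widetilde u\in L^2(\Omega)$) as an admissible $\HH(\div,\Omega)$-extension, and discarding subdominant contributions via $\eps\le1$ — the balanced-norm block alone contributes $\norm{\widetilde u}{}^2+\eps^{1/2}\norm{\nabla\widetilde u}{}^2+\eps^{-1/2}\norm{\widetilde u}{}^2$, the last summand arising from $\eps\norm{\eps^{1/4}\Delta\widetilde u}{}^2=\eps^{3/2}\norm{\Delta\widetilde u}{}^2=\eps^{-1/2}\norm{\widetilde u}{}^2$, and the remaining skeleton terms are dominated by it. Hence $\norm{\ext\widehat v}{U,1}^2\simeq\eps^{-1/2}\big(\norm{\widetilde u}{}^2+\eps\norm{\nabla\widetilde u}{}^2\big)=\eps^{-1/2}\norm{\widehat v}{H^{1/2}(\Gamma)}^2$ by Lemma~\ref{lem:H12normequiv}, which is the claimed equivalence.

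Next I would verify $\ran(\ext)\subseteq\ker(B)$. For $\uu=\ext\widehat v$ the first-order identities $\rho-\div\ssigma=0$, $\eps^{-1/4}\ssigma-\nabla\widetilde u=0$, $-\eps^{3/4}\rho+\widetilde u=0$ hold in $\Omega$ by construction of $\ext$ and~\eqref{eq:sp:harmext}, while $\widehat u^a,\widehat u^b$ and $\widehat\sigma^a,\widehat\sigma^b$ are the genuine skeleton traces of $\widetilde u$ and $\eps^{1/4}\nabla\widetilde u$ (here $\widetilde u\in H^1(\Omega)$ with $\Delta\widetilde u\in L^2(\Omega)$ gives enough regularity for these traces and for $v-\eps^{1/2}\pwlap v$ to be a legitimate test function). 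Since~\eqref{eq:sp:weakform} arises from this first-order system by element-wise integration by parts, reversing those integrations reassembles $b(\ext\widehat v,\vv)$ into volume integrals of the vanishing residuals tested against $\vv$, whence $b(\ext\widehat v,\vv)=0$ for all $\vv\in V$. Then, for general $\uu\in U$, put $\uu_0:=\uu-\ext\gamma_0\uu$; by~\eqref{lem:sp:kerBext:c} we have $\gamma_0\uu_0=0$, and since $\ext\gamma_0\uu\in\ker(B)$ also $B\uu_0=B\uu$, so the kernel-free estimate yields
\[
  \norm{\uu-\ext\gamma_0\uu}{U,1}=\norm{\uu_0}{U,1}\lesssim\norm{B\uu_0}{V'}=\norm{B\uu}{V'},
\]
which is~\eqref{lem:sp:kerBext:d}; in particular, $B\uu=0$ forces $\uu=\ext\gamma_0\uu\in\ran(\ext)$, so together with $\ran(\ext)\subseteq\ker(B)$ this proves~\eqref{lem:sp:kerBext:b}. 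The chain $\norm{B\uu}{V'}=\sup_{\vv\in V}\dual{B\uu}{\vv}/\norm{\vv}V=b(\uu,\Theta\uu)^{1/2}$ is the defining property of the trial-to-test operator.

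The main obstacle is the kernel-free inf-sup bound on $\{\uu_0\in U:\gamma_0\uu_0=0\}$: this is where the delicate $\eps$-uniform analysis of the balanced-norm formulation of~\cite{HeuerK_RDM} enters, and one must be careful that the $\eps$-weights appearing in $\norm\cdot{U,1}$ — the \emph{weaker} of the two $U$-norms, reflecting the gap between the trial scalings and the test-norm scalings — are precisely those for which that estimate is available. Everything else amounts to routine bookkeeping with powers of $\eps$ using $0<\eps\le1$.
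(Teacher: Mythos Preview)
Your proposal is correct and follows essentially the same approach as the paper. The only organizational difference is that the paper dispatches parts~\eqref{lem:sp:kerBext:b}--\eqref{lem:sp:kerBext:d} by a direct citation of \cite[Lemmas~3,4]{FuehrerH_RCD}, whereas you spell out the argument in the style of Lemma~\ref{lem:kerBext} and invoke \cite{HeuerK_RDM} for the kernel-free inf-sup bound; the $\ext$-norm computation and the boundedness of $B$ are handled identically in both.
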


\begin{proof}
First, by~\cite[Lemma~3]{HeuerK_RDM} we have $b(\uu,\vv)\lesssim\norm{\uu}{U,2}\norm{\vv}V$ for all $\uu\in U,\vv\in V$.
Dividing by $\norm{\vv}V$ and taking the supremum proves boundedness of $B$.

Second, for $\widehat v\in H^{1/2}(\Gamma)$, let
$\uu = (u,\ssigma,\rho,\widehat u^a,\widehat u^b,\widehat\sigma^a,\widehat\sigma^b) = \ext \widehat v$.
By definition of the skeleton norms we have
\begin{align*}
  &\norm{\widehat u^\star}{1/2,\cS}^2 \leq \norm{u}{}^2 + \norm{\ssigma}{}^2,\quad
  \norm{\widehat \sigma^\star}{-1/2,\cS}^2 \leq \norm{\ssigma}{}^2 + \eps\norm{\rho}{}^2
  \quad (\star\in\{a,b\}).
\end{align*}
In the definition of $\norm\cdot{U,1}$, these norms are scaled with positive powers of $\eps$.
Thus,
\begin{align*}
  \norm{u}{}^2 + \norm{\ssigma}{}^2 + \eps \norm{\rho}{}^2 \leq \norm{\uu}{U,1}^2 \lesssim 
  \norm{u}{}^2 + \norm{\ssigma}{}^2 + \eps \norm{\rho}{}^2.
\end{align*}
Using that $\rho = \eps^{1/4} \Delta u = \eps^{1/4} \eps^{-1} u = \eps^{-3/4} u$ and $\ssigma = \eps^{1/4}\nabla u$
by the definition of $\uu=\ext\widehat v$, cf.~\eqref{eq:sp:harmext}, we obtain
\begin{align*}
  \norm{u}{}^2 + \norm{\ssigma}{}^2 + \eps\norm{\rho}{}^2 = \norm{u}{}^2 + \eps^{1/2} \norm{\nabla u}{}^2 + \eps^{-1/2}
  \norm{u}{}^2 \simeq \eps^{-1/2}\left( \norm{u}{}^2 + \eps \norm{\nabla u}{}^2 \right)
\end{align*}
The last term on the right-hand side is equal to $\eps^{-1/2}\norm{\widehat v}{H^{1/2}(\Gamma)}^2$
by Lemma~\ref{lem:H12normequiv}.

Finally,~\eqref{lem:sp:kerBext:b},~\eqref{lem:sp:kerBext:c} and~\eqref{lem:sp:kerBext:d} 
are proven in~\cite[Lemmas~3,4]{FuehrerH_RCD}.
\end{proof}

Similarly to Lemma~\ref{lem:dpg:Hm12estimate} (for the unperturbed case) we need to control
the Neumann traces of elements of the quotient space $U/\ker(B)$.
As we have seen, this has a fundamental relation to the stability of the homogeneous adjoint
problem with prescribed Dirichlet boundary condition, cf.~Lemma~\ref{lem:dpg:dualestimate}.
In the singularly perturbed case the situation
is a little more technical. Following \cite{HeuerK_RDM} (see Lemmas~8 and~9 there),
we split the stability analysis of the adjoint problem into two parts. These are
the following Lemmas~\ref{lem:sp:dualestimate} and~\ref{lem:sp:dualproblem}. The
last lemma of this section (Lemma~\ref{lem:sp:Hm12estimate}) then states the control
of the Neumann traces.

\begin{lemma}\label{lem:sp:dualestimate}
  Let $\widehat w\in H^{1/2}(\Gamma)$ be given. The problem
  \begin{subequations}\label{eq:sp:dualestimate}
  \begin{align}
    \div\llambda + \eps^{-1/2}w &= 0 \quad\text{in } \Omega, \label{eq:sp:dualestimate:a} \\
    \llambda + \nabla w &= 0 \quad\text{in } \Omega, \label{eq:sp:dualestimate:b} \\
    w|_{\Gamma} &= \widehat w \label{eq:sp:dualestimate:c}
  \end{align}
  \end{subequations}
  admits a unique solution $(w,\llambda) \in H^1(\Omega)\times \HH(\div,\Omega)$ with $\Delta w\in H^1(\Omega)$, and
  \begin{align*}
    \norm{\llambda}{}^2 + \eps^{1/2}\norm{\div\llambda}{}^2 
    = \norm{\nabla w}{}^2 + \eps^{-1/2}\norm{w}{}^2 \leq 
    \eps^{-1} \norm{\widehat w}{H^{1/2}(\Gamma)}^2.
  \end{align*}
\end{lemma}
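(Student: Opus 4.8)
The plan is to eliminate $\llambda$ and reduce~\eqref{eq:sp:dualestimate} to a scalar reaction--diffusion problem, proceeding as in the proof of Lemma~\ref{lem:dpg:dualestimate} while keeping track of the $\eps$-scalings. First I would set $\llambda := -\nabla w$, so that~\eqref{eq:sp:dualestimate:b} holds by construction, and substitute this into~\eqref{eq:sp:dualestimate:a} to obtain
\begin{align*}
  -\Delta w + \eps^{-1/2} w = 0 \quad\text{in }\Omega, \qquad w|_\Gamma = \widehat w.
\end{align*}
This problem has a unique weak solution $w\in H^1(\Omega)$ (lift $\widehat w$ to $H^1(\Omega)$ and apply the Lax--Milgram lemma to the homogeneous-Dirichlet correction). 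Since $-\Delta w = -\eps^{-1/2} w$ holds distributionally and $w\in H^1(\Omega)$, we get $\Delta w = \eps^{-1/2} w\in H^1(\Omega)$; in particular $\div\llambda = -\Delta w = -\eps^{-1/2} w\in L^2(\Omega)$, so $\llambda\in\HH(\div,\Omega)$. Uniqueness of the pair follows exactly as in Lemma~\ref{lem:dpg:dualestimate}: a second solution $(w_2,\llambda_2)$ must satisfy $\llambda_2 = -\nabla w_2$ and $-\Delta w_2 + \eps^{-1/2} w_2 = 0$ with $w_2|_\Gamma = \widehat w$, so $w-w_2$ solves the homogeneous problem with zero trace, hence vanishes, and then $\llambda = \llambda_2$.

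The claimed identity is immediate from the PDE relations: $\norm{\llambda}{}^2 = \norm{\nabla w}{}^2$, while $\div\llambda = -\eps^{-1/2} w$ gives $\eps^{1/2}\norm{\div\llambda}{}^2 = \eps^{1/2}\eps^{-1}\norm{w}{}^2 = \eps^{-1/2}\norm{w}{}^2$, so that $\norm{\llambda}{}^2 + \eps^{1/2}\norm{\div\llambda}{}^2 = \norm{\nabla w}{}^2 + \eps^{-1/2}\norm{w}{}^2$. For the remaining bound I would use that $w$ is the minimizer of $v\mapsto \norm{\nabla v}{}^2 + \eps^{-1/2}\norm{v}{}^2$ over $\set{v\in H^1(\Omega)}{v|_\Gamma = \widehat w}$, this being the first-order optimality condition attached to the weak form above. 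Hence, for every admissible $v$,
\begin{align*}
  \norm{\nabla w}{}^2 + \eps^{-1/2}\norm{w}{}^2 \le \norm{\nabla v}{}^2 + \eps^{-1/2}\norm{v}{}^2
  \le \norm{\nabla v}{}^2 + \eps^{-1}\norm{v}{}^2 = \eps^{-1}\bigl(\norm{v}{}^2 + \eps\norm{\nabla v}{}^2\bigr),
\end{align*}
where the middle step uses $\eps\le 1$. Taking the infimum over all such $v$ and recalling the definition of $\norm{\widehat w}{H^{1/2}(\Gamma)}$ (cf.\ also Lemma~\ref{lem:H12normequiv}) gives $\norm{\nabla w}{}^2 + \eps^{-1/2}\norm{w}{}^2 \le \eps^{-1}\norm{\widehat w}{H^{1/2}(\Gamma)}^2$.

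The only delicate point, and the sole difference from the unperturbed Lemma~\ref{lem:dpg:dualestimate}, is the mismatch of $\eps$-powers: the dual problem~\eqref{eq:sp:dualestimate} naturally carries the weight $\eps^{-1/2}$ on the zeroth-order term, whereas $\norm{\cdot}{H^{1/2}(\Gamma)}$ carries the weight $\eps$ on the gradient. This is precisely why one ends up with an inequality with an extra factor $\eps^{-1}$ rather than an equality, and it is the place where the standing assumption $\eps\le 1$ enters; all other steps are the routine elliptic arguments already used in Lemma~\ref{lem:dpg:dualestimate}.
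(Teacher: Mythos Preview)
Your proof is correct and follows essentially the same route as the paper: reduce to the scalar problem $-\Delta w + \eps^{-1/2} w = 0$ with $w|_\Gamma = \widehat w$, read off the identity for the norms from $\llambda = -\nabla w$ and $\div\llambda = -\eps^{-1/2} w$, and obtain the final bound via the minimization/weak-formulation property together with $\eps\le 1$. The paper phrases the last step through the weak formulation and Cauchy--Schwarz rather than the minimization characterization, but the two are equivalent and yield the same chain of inequalities.
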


\begin{proof}
The proof follows the same arguments as given in the proof of Lemma~\ref{lem:dpg:dualestimate}.
To see the estimate for the norms, we make use of the weak formulation and obtain
\begin{align*}
  \norm{\nabla w}{}^2 + \eps^{-1/2}\norm{w}{}^2 \leq 
  \norm{\nabla \widetilde u}{}^2  + \eps^{-1/2}\norm{\widetilde u}{}^2 \leq 
  \eps^{-1} \left( \eps\norm{\nabla\widetilde u}{}^2 + \norm{\widetilde u}{}^2 \right)
\end{align*}
for all $\widetilde u\in H^1(\Omega)$ with $\widetilde u|_\Gamma = \widehat w$.
Taking the infimum over these functions $\widetilde u$ finishes the proof.
\end{proof}

\begin{lemma}\label{lem:sp:dualproblem}
  Let $\widehat v\in H^{1/2}(\Gamma)$ be given. The problem
  \begin{subequations}\label{eq:sp:dualproblem}
  \begin{align}
    \div\ttau + v &= 0\quad\text{in } \Omega, \label{eq:sp:dualproblem:a} \\
    \nabla\mu + (\eps^{1/4}+\eps^{3/4})\nabla v + \eps^{-1/4}\ttau &= 0
                      \quad\text{in } \Omega, \label{eq:sp:dualproblem:b}\\
    \eps^{5/4} \Delta v + \mu &= 0\quad\text{in } \Omega, \label{eq:sp:dualproblem:c} \\
    \quad v|_\Gamma &= 0, \qquad \Delta v|_\Gamma = -\eps^{-5/4} \widehat v \label{eq:sp:dualproblem:d}
  \end{align}
  \end{subequations}
  has a unique solution $\vv:=(\mu,\ttau,v)\in H^1(\Omega)\times \HH(\div,\Omega)\times H^1(\Delta,\Omega)$.
  It satisfies $\mu|_\Gamma = \widehat v$ and
  \begin{align*}
    \norm{\vv}{V} \leq 3/\sqrt{2} \eps^{-1/2}\norm{\widehat v}{H^{1/2}(\Gamma)}.
  \end{align*}
\end{lemma}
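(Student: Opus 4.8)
The plan is to collapse the first-order system~\eqref{eq:sp:dualproblem:a}--\eqref{eq:sp:dualproblem:d} into a scalar fourth-order boundary value problem for $v$, to exploit a factorization of the resulting operator, and then to assemble the solution from the two building blocks of this section: Lemma~\ref{lem:sp:dualestimate} (a reaction-diffusion problem with parameter $\eps^{1/2}$) and the quasi-harmonic extension underlying $\norm\cdot{H^{1/2}(\Gamma)}$ (parameter $\eps$) together with a coercive estimate for a reaction-diffusion problem with homogeneous Dirichlet data. First I would eliminate $\mu$ through~\eqref{eq:sp:dualproblem:c}, giving $\mu=-\eps^{5/4}\Delta v$, and $\ttau$ through~\eqref{eq:sp:dualproblem:b}, giving $\ttau=-\eps^{1/4}\nabla\mu-(\eps^{1/2}+\eps)\nabla v$; inserting both into~\eqref{eq:sp:dualproblem:a} yields
\begin{align*}
  \eps^{3/2}\Delta^2 v - (\eps^{1/2}+\eps)\Delta v + v = 0 \ \text{ in }\Omega,\qquad
  v|_\Gamma=0,\quad \Delta v|_\Gamma=-\eps^{-5/4}\widehat v .
\end{align*}
The key algebraic fact is $\eps^{3/2}\Delta^2-(\eps^{1/2}+\eps)\Delta+I=(\eps^{1/2}\Delta-I)(\eps\Delta-I)$. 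Consequently $\phi:=\eps\Delta v-v$ solves $-\eps^{1/2}\Delta\phi+\phi=0$ in $\Omega$ with $\phi|_\Gamma=\eps\,\Delta v|_\Gamma-v|_\Gamma=-\eps^{-1/4}\widehat v$, while $v$ itself solves the coercive problem $-\eps\Delta v+v=-\phi$ in $\Omega$, $v|_\Gamma=0$, and $\eps\Delta v=v+\phi$.

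To construct a solution I reverse this chain. Let $\phi\in H^1(\Omega)$ be the unique solution of $-\eps^{1/2}\Delta\phi+\phi=0$, $\phi|_\Gamma=-\eps^{-1/4}\widehat v$; by Lemma~\ref{lem:sp:dualestimate} (applied with $\widehat w=-\eps^{-1/4}\widehat v$) one has $\Delta\phi\in H^1(\Omega)$ and $\norm{\nabla\phi}{}^2+\eps^{-1/2}\norm\phi{}^2\le \eps^{-1}\norm{-\eps^{-1/4}\widehat v}{H^{1/2}(\Gamma)}^2=\eps^{-3/2}\norm{\widehat v}{H^{1/2}(\Gamma)}^2$, in particular $\norm\phi{}^2\le\eps^{-1}\norm{\widehat v}{H^{1/2}(\Gamma)}^2$. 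Let $r\in H^1(\Omega)$ solve $-\eps\Delta r+r=-\eps^{1/2}\phi$, $r|_\Gamma=-\eps^{-1/4}\widehat v$ (this is exactly the equation satisfied by $v+\phi$, by the reduction above and $\Delta\phi=\eps^{-1/2}\phi$). Put $v:=r-\phi$, $\mu:=-\eps^{1/4}r$ (which equals $-\eps^{5/4}\Delta v$, since $\eps\Delta v=v+\phi=r$), and $\ttau:=\eps^{1/2}\nabla\phi-\eps\nabla v$. A direct substitution using $\eps^{1/2}\Delta\phi=\phi$ and $\eps\Delta v=r$ verifies~\eqref{eq:sp:dualproblem:a}--\eqref{eq:sp:dualproblem:d} and shows $\mu|_\Gamma=-\eps^{1/4}r|_\Gamma=\widehat v$; the claimed regularity is immediate ($\Delta v=\eps^{-1}r\in H^1(\Omega)$, hence $v\in H^1(\Delta,\Omega)$; $\div\ttau=-v\in L^2(\Omega)$, hence $\ttau\in\HH(\div,\Omega)$; $\nabla\mu=-\eps^{1/4}\nabla r\in L^2(\Omega)$, hence $\mu\in H^1(\Omega)$). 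Uniqueness follows along the same chain: any solution forces $v$ to solve the fourth-order problem, hence $\phi=\eps\Delta v-v$ and $r=v+\phi$ solve the uniquely solvable problems above, so $v$, $\mu$, $\ttau$ are determined.

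Finally I would bound $\norm\vv V$. Using the identities just derived,
\begin{align*}
  \norm\vv V^2 = 2\eps^{-1/2}\norm r{}^2 + \eps^{1/2}\norm{\nabla r}{}^2
  + \eps^{-1/2}\norm{\eps^{1/2}\nabla\phi-\eps\nabla v}{}^2 + 2\norm v{}^2 + (\eps^{1/2}+\eps)\norm{\nabla v}{}^2 ,
\end{align*}
since $\norm{\div\ttau}{}^2=\norm v{}^2$, $\eps^{-1}\norm\mu{}^2=\eps^{3/2}\norm{\Delta v}{}^2=\eps^{-1/2}\norm r{}^2$, and $\norm{\nabla\mu}{}^2=\eps^{5/2}\norm{\nabla\Delta v}{}^2=\eps^{1/2}\norm{\nabla r}{}^2$. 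Here $\norm\phi{}$, $\norm{\nabla\phi}{}$ are controlled by Lemma~\ref{lem:sp:dualestimate} as above, and $\norm r{}$, $\norm{\nabla r}{}$ by splitting $r=r_0+r_1$: $r_1$ is the quasi-harmonic extension of $-\eps^{-1/4}\widehat v$, for which $\norm{r_1}{}^2+\eps\norm{\nabla r_1}{}^2=\norm{-\eps^{-1/4}\widehat v}{H^{1/2}(\Gamma)}^2=\eps^{-1/2}\norm{\widehat v}{H^{1/2}(\Gamma)}^2$ by Lemma~\ref{lem:H12normequiv}, while $r_0\in H^1_0(\Omega)$ solves $-\eps\Delta r_0+r_0=-\eps^{1/2}\phi$ and is estimated by testing with $r_0$ and Young's inequality, $\norm{r_0}{}^2+2\eps\norm{\nabla r_0}{}^2\le\eps\norm\phi{}^2\le\norm{\widehat v}{H^{1/2}(\Gamma)}^2$ (equivalently, one tests $r$'s equation with $r$ minus an extension of its boundary data). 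Substituting $v=r-\phi$ in the remaining terms and collecting, every contribution becomes $\norm{\widehat v}{H^{1/2}(\Gamma)}^2$ times a power of $\eps$, the dominant ones being $2\eps^{-1/2}\norm r{}^2$ and $\eps^{1/2}\norm{\nabla r}{}^2$, both $\OO(\eps^{-1}\norm{\widehat v}{H^{1/2}(\Gamma)}^2)$. I expect the main obstacle to be the sharp constant $3/\sqrt2$: it forces one to avoid crude triangle-inequality losses in the split of $r$ and to tune the Young parameters, and it is precisely why $\norm\cdot V$ carries the extra term $\eps\norm{\nabla v}{}^2$ (absent in~\cite{HeuerK_RDM}). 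Existence, uniqueness, and the verification of~\eqref{eq:sp:dualproblem:a}--\eqref{eq:sp:dualproblem:d} are routine once the factorization is in place.
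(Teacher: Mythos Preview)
Your construction is essentially the paper's, relabeled: your $\phi$ is the paper's $-w$ (so $\llambda=-\nabla w=\nabla\phi$), your $r$ is $v-w=\eps\Delta v$, and the resulting $\mu,\ttau,v$ coincide exactly with the paper's. The factorization $(\eps^{1/2}\Delta-I)(\eps\Delta-I)$ is a nice way to motivate the two-step construction, which the paper does not make explicit.

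Where you diverge from the paper is in the norm bound. The paper does \emph{not} split $r$ (equivalently $w$-related quantities) as $r_0+r_1$ and estimate each piece. Instead it tests the fourth-order problem for $v$ directly: combining~\eqref{eq:sp:dualproblem:a}--\eqref{eq:sp:dualproblem:c} with $z=v$ and integrating by parts yields the exact identity
\[
  \eps^{3/2}\norm{\Delta v}{}^2+(\eps^{1/2}+\eps)\norm{\nabla v}{}^2+\norm{v}{}^2
  =-\eps^{1/4}\dual{\partial_\nn v}{\widehat v}_\Gamma,
\]
from which one reads off both $\eps^{3/2}\norm{\Delta v}{}^2+(\eps^{1/2}+\eps)\norm{\nabla v}{}^2+\norm{v}{}^2\le\eps^{-1}\norm{\widehat v}{H^{1/2}(\Gamma)}^2$ and (via Young) $\eps\norm{\nabla v}{}^2+\norm{v}{}^2\le\tfrac14\eps^{-1}\norm{\widehat v}{H^{1/2}(\Gamma)}^2$. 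The remaining norms of $\mu,\ttau$ are then handled by computing the cross term exactly,
\[
  \ip{\llambda}{\nabla v}=\eps^{-1/2}\bigl(\eps\norm{\nabla v}{}^2+\norm{v}{}^2\bigr),
\]
which lets the mixed term in $\eps^{-1/2}\norm{\ttau}{}^2$ and $\norm{\nabla\mu}{}^2$ partially cancel. Summing and inserting the three ingredients gives precisely $\tfrac92\eps^{-1}\norm{\widehat v}{H^{1/2}(\Gamma)}^2$.

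Your splitting $r=r_0+r_1$ is a genuine alternative, but as you yourself anticipate, it loses constants through the triangle inequality: already $2\eps^{-1/2}\norm{r}{}^2\le 4\eps^{-1/2}(\norm{r_0}{}^2+\norm{r_1}{}^2)$ gives a contribution $\sim 4\eps^{-1}\norm{\widehat v}{H^{1/2}(\Gamma)}^2$ from $r_1$ alone, leaving almost no room for the rest. The paper's route avoids this by never decomposing $r$ and by exploiting the exact cross-term identity above; if you want the sharp $3/\sqrt2$, that is the cleaner path.
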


\begin{proof}
Let $(w,\llambda)\in H^1(\Omega)\times \HH(\div,\Omega)$ be the solution
of~\eqref{eq:sp:dualestimate} with $\widehat w = \eps^{-1/4}\widehat v$.
Define $v\in H_0^1(\Omega)$ to be the unique solution of
\begin{align*}
  -\eps \Delta v + v = w\quad\text{in } \Omega, \qquad v|_\Gamma = 0.
\end{align*}
Note that $\Delta v = \eps^{-1} (v-w) \in H^1(\Omega)$
and $\Delta v|_\Gamma = -\eps^{-1}w|_\Gamma = -\eps^{-5/4}\widehat v$.
We define $\mu:= -\eps^{5/4}\Delta v\in H^1(\Omega)$ and have
that $\mu|_\Gamma = \eps^{1/4}w|_\Gamma = \widehat v$.
In particular, \eqref{eq:sp:dualproblem:c} and \eqref{eq:sp:dualproblem:d} are satisfied.

Now, define $\ttau := \eps^{1/2}\llambda - \eps\nabla v$.
Note that $\llambda\in \HH(\div,\Omega)$ and $\nabla v \in \HH(\div,\Omega)$. Hence, $\ttau\in \HH(\div,\Omega)$.
Together with~\eqref{eq:sp:dualestimate:a} and $-\eps\Delta v + v = w$ we get
\begin{align*}
  \div\ttau = \eps^{1/2}\div\llambda - \eps\Delta v = -w - \eps\Delta v = -v,
\end{align*}
which is~\eqref{eq:sp:dualproblem:a}.
One also establishes that \eqref{eq:sp:dualproblem:b} holds. In fact, by the definition of $\ttau$
and relation \eqref{eq:sp:dualestimate:b}, we find
\begin{align*}
   \nabla\mu + (\eps^{1/4}+\eps^{3/4})\nabla v + \eps^{-1/4}\ttau
   &=
   \nabla\mu + (\eps^{1/4}+\eps^{3/4})\nabla v + \eps^{-1/4} (\eps^{1/2}\llambda-\eps\nabla v)\\
   &=
   \nabla\mu + \eps^{1/4}\nabla v - \eps^{1/4}\nabla w.
\end{align*}
The last term vanishes since $\mu=-\eps^{5/4}\Delta v = -\eps^{1/4}(v-w)$ by definition of $\mu$ and $v$.

Now, testing~\eqref{eq:sp:dualproblem:c} with $\eps^{1/4}\Delta z$,~\eqref{eq:sp:dualproblem:b}
with $\eps^{1/4}\nabla z$, and~\eqref{eq:sp:dualproblem:a} with $z$ for
$z\in H^1(\Delta,\Omega)$ with $z|_\Gamma = 0$, and adding the resulting equations,
we obtain, after integrating by parts,
\begin{align}\label{eq:sp:dualproblem:varform}
  \eps^{3/2}\ip{\Delta v}{\Delta z} + (\eps^{1/2}+\eps) \ip{\nabla v}{\nabla z} + \ip{v}{z} = 
  -\eps^{1/4} \dual{\nabla z\cdot\nn_\Omega}{\widehat v}_\Gamma.
\end{align}
For any $\widetilde u\in H^1(\Omega)$ with $\widetilde u|_\Gamma = \widehat v$ we infer
\begin{align*}
  \eps^{3/2} \norm{\Delta v}{}^2 + (\eps^{1/2}+\eps)\norm{\nabla v}{}^2 + \norm{v}{}^2 &=
  -\eps^{1/4}\dual{\nabla v\cdot\nn_\Omega}{\widehat v}_\Gamma
  = -\eps^{1/4} \ip{\Delta v}{\widetilde u} -\eps^{1/4}\ip{\nabla v}{\nabla\widetilde u} \\
  &\leq \eps^{3/4}\norm{\Delta v}{} \norm{\eps^{-1/2}\widetilde u}{} + \eps^{1/4}\norm{\nabla v}{} 
  \norm{\nabla \widetilde u}{} \\
  &\leq \left(\eps^{3/2}\norm{\Delta v}{}^2 + \eps^{1/2}\norm{\nabla v}{}^2\right)^{1/2}
  \eps^{-1/2}\left( \norm{\widetilde u}{}^2 + \eps \norm{\nabla \widetilde u}{}^2\right)^{1/2}.
\end{align*}
On the one hand, we conclude
\begin{align}\label{eq:Hm12estimate:estV1}
  \left(\eps^{3/2} \norm{\Delta v}{}^2 + (\eps^{1/2}+\eps)\norm{\nabla v}{}^2 + \norm{v}{}^2\right)^{1/2} 
  \leq \eps^{-1/2}\norm{\widehat v}{H^{1/2}(\Gamma)}.
\end{align}
On the other hand, using Young's inequality, we also conclude that
\begin{align*}
  \eps^{3/2} \norm{\Delta v}{}^2 + (\eps^{1/2}+\eps)\norm{\nabla v}{}^2 + \norm{v}{}^2 
  \leq \tfrac{\delta^{-1}}2 \left(\eps^{3/2} \norm{\Delta v}{}^2 + \eps^{1/2}\norm{\nabla v}{}^2\right) 
  + \tfrac{\delta}2 \eps^{-1} \norm{\widehat v}{H^{1/2}(\Gamma)}^2.
\end{align*}
For $\delta=\tfrac12$ we get
\begin{align}\label{eq:Hm12estimate:estV2}
  \eps\norm{\nabla v}{}^2 + \norm{v}{}^2 \leq \frac14 \eps^{-1}\norm{\widehat v}{H^{1/2}(\Gamma)}^2.
\end{align}
By~\eqref{eq:sp:dualproblem:c} and~\eqref{eq:sp:dualproblem:a} we have
\begin{align*}
  \eps^{-1}\norm{\mu}{}^2 + \norm{\div\ttau}{}^2 = \eps^{3/2}\norm{\Delta v}{}^2 
  + \norm{v}{}^2.
\end{align*}
It remains to estimate the norms of $\ttau$ and $\nabla v$.
To this end we rewrite the term $\ip{\llambda}{\nabla v}$.
Integrating by parts, the condition $v|_\Gamma = 0$,~\eqref{eq:sp:dualestimate:a},
and the identity $w=-\eps\Delta v + v$ show that
\begin{align*}
  \ip{\llambda}{\nabla v} = -\ip{\div\llambda}{v}
  &= \eps^{-1/2}\ip{w}{v} = -\eps^{1/2}\ip{\Delta v}{v} + \eps^{-1/2}\ip{v}{v}
   = \eps^{-1/2}\left( \eps\norm{\nabla v}{}^2 + \norm{v}{}^2\right).
\end{align*}
Recall that $\eps^{-1/4}\ttau = \eps^{1/4}\llambda -
\eps^{3/4}\nabla v$. Thus,
\begin{align*}
  \eps^{-1/2}\norm{\ttau}{}^2 = \eps^{1/2}\norm{\llambda}{}^2 
  -2\eps\ip{\llambda}{\nabla v} + \eps^{3/2}\norm{\nabla v}{}^2.
\end{align*}
For the estimation of $\norm{\nabla \mu}{}$ we use~\eqref{eq:sp:dualproblem:b} 
and again $\eps^{1/4}\llambda = \eps^{3/4}\nabla v + \eps^{-1/4}\ttau$ to obtain
\begin{align*}
  \norm{\nabla\mu}{}^2 = \norm{\eps^{1/4}\nabla v + \eps^{1/4}\llambda}{}^2 
  = \eps^{1/2}\norm{\nabla v}{}^2 + 2\eps^{1/2}\ip{\llambda}{\nabla v} 
  + \eps^{1/2}\norm{\llambda}{}^2.
\end{align*}
This gives the estimate
\begin{align*}
  &\eps^{-1}\norm{\mu}{}^2 + \norm{\div\ttau}{}^2 +
  \eps^{-1/2}\norm{\ttau}{}^2 + \norm{\nabla\mu}{}^2 
  \\ &\qquad\qquad \leq 
  \eps^{3/2}\norm{\Delta v}{}^2 + \norm{v}{}^2 +
  2\eps^{1/2}\norm{\llambda}{}^2 +
  (\eps^{1/2}+2\eps)\norm{\nabla v}{}^2 + 2\norm{v}{}^2
  \\ &\qquad\qquad \leq 
  \eps^{3/2}\norm{\Delta v}{}^2 + (\eps^{1/2}+\eps)\norm{\nabla v}{}^2 + \norm{v}{}^2 + 
  2\left(\eps\norm{\nabla v}{}^2 + \norm{v}{}^2\right) + 2\eps^{1/2}\norm{\llambda}{}^2.
\end{align*}
Using estimates~\eqref{eq:Hm12estimate:estV1},~\eqref{eq:Hm12estimate:estV2}, and Lemma~\ref{lem:sp:dualestimate},
we put everything together to conclude for the overall norm
\begin{align*}
  \norm{(\mu,\ttau,v)}{V}^2 \leq \frac{9}2 \eps^{-1}\norm{\widehat v}{H^{1/2}(\Gamma)}^2.
\end{align*}
To see uniqueness of $\vv$, let $(\mu_2,\ttau_2,v_2)$ solve~\eqref{eq:sp:dualproblem}.
Define $\ww:=\vv-(\mu_2,\ttau_2,v_2)$ and set $w:=v-v_2$. Note that $w|_\Gamma = 0$ and 
$\Delta w|_\Gamma = 0$. The variational formulation for $w$, which can be obtained in the same way
as~\eqref{eq:sp:dualproblem:varform}, proves
\begin{align*}
  \eps^{3/2} \norm{\Delta w}{}^2 + (\eps^{1/2}+\eps)\norm{\nabla w}{}^2 + \norm{w}{}^2 = 0.
\end{align*}
Thus, $w=0$ or equivalently $v=v_2$. Equation~\eqref{eq:sp:dualproblem:c} then gives $\mu = -\eps^{5/4}\Delta v = 
-\eps^{5/4}\Delta v_2 = \mu_2$ and, similarly,~\eqref{eq:sp:dualproblem:b} shows $\ttau=\ttau_2$.
\end{proof}

\begin{lemma}\label{lem:sp:Hm12estimate}
  There holds
  \begin{align*}
    \eps^{1/4}|\dual{\gamma_\nn(\uu-\ext\gamma_0\uu)}{\widehat v}_\Gamma| \leq 3/\sqrt{2}
    \norm{B\uu}{V'}\eps^{-1/4}\norm{\widehat v}{H^{1/2}(\Gamma)} \quad\text{for all } \uu\in U, \,
    \widehat v\in H^{1/2}(\Gamma).
  \end{align*}
\end{lemma}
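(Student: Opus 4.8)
The plan is to imitate the proof of Lemma~\ref{lem:dpg:Hm12estimate}, with the dual problem~\eqref{eq:dpg:dualestimate} replaced by~\eqref{eq:sp:dualproblem} and its $\eps$-robust stability bound from Lemma~\ref{lem:sp:dualproblem}. Fix $\uu\in U$ and $\widehat v\in H^{1/2}(\Gamma)$, and abbreviate $\uu_0:=\uu-\ext\gamma_0\uu$, with components written $\uu_0=(u_0,\ssigma_0,\rho_0,\widehat u^a_0,\widehat u^b_0,\widehat\sigma^a_0,\widehat\sigma^b_0)$. Since $\ext$ is a right-inverse of $\gamma_0$ we have $\gamma_0\uu_0=0$, hence $\widehat u^a_0|_\Gamma=\widehat u^b_0|_\Gamma=0$ (the second equality by the definition of $U$); and since $\ext\gamma_0\uu\in\ker(B)$ we have $B\uu_0=B\uu$. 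Let $\vv=(\mu,\ttau,v)\in H^1(\Omega)\times\HH(\div,\Omega)\times H^1(\Delta,\Omega)\subseteq V$ be the unique solution of~\eqref{eq:sp:dualproblem} for the given $\widehat v$; recall $\mu|_\Gamma=\widehat v$, $v|_\Gamma=0$, and, by Lemma~\ref{lem:sp:dualproblem}, $\norm{\vv}{V}\le\tfrac{3}{\sqrt2}\,\eps^{-1/2}\norm{\widehat v}{H^{1/2}(\Gamma)}$.

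The heart of the argument is to show that testing $b(\uu_0,\cdot)$ against this particular $\vv$ collapses to a single boundary term, $b(\uu_0,\vv)=-\dual{\gamma_\nn\uu_0}{\widehat v}_\Gamma$. Because $\vv$ is conforming, the skeleton dualities in $b(\cdot,\cdot)$ reduce to dualities on $\Gamma$ via~\eqref{trace} and~\eqref{ntrace}; using $\widehat u^a_0|_\Gamma=\widehat u^b_0|_\Gamma=0$ and $v|_\Gamma=0$, the trace terms $\dual{\widehat u^a_0|_\Gamma}{\ttau\cdot\nn_\Omega}_\Gamma$, $\dual{\widehat\sigma^b_0|_\Gamma}{v|_\Gamma}_\Gamma$ and $\dual{\widehat u^b_0|_\Gamma}{\nabla v\cdot\nn_\Omega}_\Gamma$ vanish, leaving only $-\dual{\widehat\sigma^a_0|_\Gamma}{\mu|_\Gamma}_\Gamma=-\dual{\gamma_\nn\uu_0}{\widehat v}_\Gamma$. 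For the volume part I would group the remaining $L^2$-integrals by test component and substitute the strong-form identities of~\eqref{eq:sp:dualproblem}: \eqref{eq:sp:dualproblem:c}, i.e.\ $\mu=-\eps^{5/4}\Delta v$, cancels $\ip{\rho_0}{\mu}$ against $\eps^{5/4}\ip{\rho_0}{\Delta v}$; \eqref{eq:sp:dualproblem:a}, i.e.\ $\div\ttau=-v$, cancels $\ip{u_0}{\div\ttau}$ against $\ip{u_0}{v}$; and \eqref{eq:sp:dualproblem:b}, i.e.\ $\eps^{-1/4}\ttau=-\nabla\mu-(\eps^{1/4}+\eps^{3/4})\nabla v$, makes $\ip{\ssigma_0}{\nabla\mu}+\eps^{-1/4}\ip{\ssigma_0}{\ttau}+(\eps^{3/4}+\eps^{1/4})\ip{\ssigma_0}{\nabla v}$ vanish. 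Thus all volume contributions cancel in pairs and only the Neumann-trace term survives.

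With this identity the proof concludes exactly as in Lemma~\ref{lem:dpg:Hm12estimate}:
\[
  |\dual{\gamma_\nn(\uu-\ext\gamma_0\uu)}{\widehat v}_\Gamma|=|b(\uu_0,\vv)|=|\dual{B\uu_0}{\vv}|
  \le\norm{B\uu_0}{V'}\,\norm{\vv}{V}=\norm{B\uu}{V'}\,\norm{\vv}{V}
  \le\tfrac{3}{\sqrt2}\,\eps^{-1/2}\norm{B\uu}{V'}\,\norm{\widehat v}{H^{1/2}(\Gamma)}.
\]
Multiplying both sides by $\eps^{1/4}$ and using $\eps^{1/4}\eps^{-1/2}=\eps^{-1/4}$ gives precisely the asserted estimate.

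I do not expect a serious obstacle: the genuinely technical input, the $\eps$-robust stability bound $\norm{\vv}{V}\lesssim\eps^{-1/2}\norm{\widehat v}{H^{1/2}(\Gamma)}$ for the adjoint problem, is already supplied by Lemma~\ref{lem:sp:dualproblem}, so what remains is bookkeeping. The only point that needs genuine care is the pairwise cancellation of the volume terms in $b(\uu_0,\vv)$: this works because~\eqref{eq:sp:dualproblem} is the strong form of the adjoint of the field equations in~\eqref{eq:sp:weakform}, but one must track the various powers of $\eps$ — in particular the combination $\eps^{1/4}+\eps^{3/4}$ in~\eqref{eq:sp:dualproblem:b}, which is exactly what is needed to match the two $\nabla v$-terms carried by $b(\cdot,\cdot)$ — to verify that nothing is left over and that the resulting prefactor is indeed $\tfrac{3}{\sqrt2}\,\eps^{-1/4}$.
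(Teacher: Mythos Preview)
Your proof is correct and follows exactly the approach indicated in the paper: the paper merely says that the argument is the same as that of Lemma~\ref{lem:dpg:Hm12estimate} with Lemma~\ref{lem:sp:dualproblem} playing the role of Lemma~\ref{lem:dpg:dualestimate}, and you have carried out precisely this, supplying the bookkeeping details (the reduction of skeleton terms to $\Gamma$ via conformity of $\vv$, the vanishing of three of the four boundary terms using $\widehat u^a_0|_\Gamma=\widehat u^b_0|_\Gamma=0$ and $v|_\Gamma=0$, and the pairwise cancellation of the volume terms via~\eqref{eq:sp:dualproblem:a}--\eqref{eq:sp:dualproblem:c}) that the paper leaves implicit.
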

\begin{proof}
The idea of the proof is the same as in the proof of Lemma~\ref{lem:dpg:Hm12estimate}, using
Lemma~\ref{lem:sp:dualproblem} instead of Lemma~\ref{lem:dpg:dualestimate}.
\end{proof}

\subsection{Proof of Theorem~\ref{thm:sp:main}} \label{sec:sp:proof}

First, we show boundedness and coercivity of $a^s(\cdot,\cdot)$ with respect to the norm $\enorm\cdot$.
Then the Lions-Stampacchia theorem, see, e.g.,~\cite{Glowinski_08_NMN,GlowinskiLT_81_NAV,Rodrigues_87_OPM},
proves unique solvability of~\eqref{eq:dpg:varineq} provided that $F : U\to \R$ is a linear functional, 
which follows by the boundedness of $\Theta:U\to\R$.

We start by showing the boundedness.
Since $b(\cdot,\Theta\cdot)$ is symmetric and positive semi-definite, the Cauchy-Schwarz inequality proves
\begin{align*}
  |b(\uu,\Theta_\beta\vv)| \leq \beta b(\uu,\Theta\uu)^{1/2} b(\vv,\Theta\vv)^{1/2} = \beta \norm{B\uu}{V'}\norm{B\vv}{V'}
\end{align*}
for all $\uu,\vv\in U$. For the boundary terms, we consider
\begin{align*}
  \eps^{1/4} |\dual{\gamma_\nn\uu}{\gamma_0\vv}_\Gamma| \leq
  \eps^{1/4}|\dual{\gamma_\nn(\uu-\ext\gamma_0\uu)}{\gamma_0\vv}_\Gamma| 
  + \eps^{1/4} |\dual{\gamma_\nn\ext\gamma_0\uu}{\gamma_0\vv}_\Gamma|.
\end{align*}
The first term on the right-hand side is estimated with Lemma~\ref{lem:sp:Hm12estimate}. 
Let $\widetilde u \in H^1(\Omega)$ be the quasi-harmonic extension of $\gamma_0\uu$ and let $\widetilde v \in
H^1(\Omega)$ be the quasi-harmonic extension of $\gamma_0\vv$. 
Then, for the second term we get with integration by parts (cf.~Lemma~\ref{lem:H12normequiv})
\begin{align*}
  \eps^{1/4}\dual{\gamma_\nn\ext\gamma_0\uu}{\gamma_0\vv}_\Gamma = \eps^{-1/2} 
  \Big( \eps\ip{\nabla \widetilde u}{\nabla\widetilde v} + \ip{\widetilde u}{\widetilde v} \Big)
  \leq \eps^{-1/4}\norm{\gamma_0\uu}{H^{1/2}(\Gamma)} \eps^{-1/4}\norm{\gamma_0\vv}{H^{1/2}(\Gamma)}.
\end{align*}
Together, this gives for the boundary term
\begin{align*}
  \eps^{1/4} |\dual{\gamma_\nn\uu}{\gamma_0\vv}_\Gamma| \leq 
  3/\sqrt{2} \norm{B\uu}{V'}\eps^{-1/4}\norm{\gamma_0\vv}{H^{1/2}(\Gamma)} + 
  \eps^{-1/4}\norm{\gamma_0\uu}{H^{1/2}(\Gamma)} \eps^{-1/4}\norm{\gamma_0\vv}{H^{1/2}(\Gamma)}.
\end{align*}
The second boundary term $\eps^{1/4} \dual{\gamma_\nn\vv}{\gamma_0\uu}_\Gamma$ is treated identically.
Altogether, this proves the boundedness of $a^s(\cdot,\cdot)$.

For the proof of coercivity we use Lemma~\ref{lem:H12normequiv}, Lemma~\ref{lem:sp:Hm12estimate} 
and Young's inequality to find that, for $\delta>0$,
\begin{align*}
  \enorm{\uu}^2 &= \norm{B\uu}{V'}^2 + \eps^{-1/2} \norm{\gamma_0\uu}{H^{1/2}(\Gamma)}^2 = 
  \norm{B\uu}{V'}^2 + \eps^{1/4}\dual{\gamma_\nn\ext\gamma_0\uu}{\gamma_0\uu}_\Gamma  \\
  &= \norm{B\uu}{V'}^2 + \eps^{1/4}\dual{\gamma_\nn(\ext\gamma_0\uu-\uu)}{\gamma_0\uu}_\Gamma + 
  \eps^{1/4}\dual{\gamma_\nn\uu}{\gamma_0\uu}_\Gamma  \\
  &\leq \norm{B\uu}{V'}^2 + 3/\sqrt{2} \norm{B\uu}{V'} \eps^{-1/4}\norm{\gamma_0\uu}{H^{1/2}(\Gamma)} + 
  \eps^{1/4}\dual{\gamma_\nn\uu}{\gamma_0\uu}_\Gamma  \\
  &\leq (1+\delta^{-1}\tfrac{9}4) \norm{B\uu}{V'}^2 + \eps^{1/4}\dual{\gamma_\nn\uu}{\gamma_0\uu}_\Gamma
  +\tfrac\delta2 \eps^{-1/2}\norm{\gamma_0\uu}{H^{1/2}(\Gamma)}^2.
\end{align*}
We choose $\delta = \tfrac{9}8$, which implies $1+\delta^{-1}\tfrac{9}4 = 3$.
Then, subtracting the last term on the right-hand side, we obtain for $\beta\ge 3$
\begin{align*}
  \norm{B\uu}{V'}^2 + \tfrac{7}{16} \eps^{-1/2} \norm{\gamma_0\uu}{H^{1/2}(\Gamma)}^2 
  \leq 3 \norm{B\uu}{V'}^2 + \eps^{1/4}\dual{\gamma_\nn\uu}{\gamma_0\uu}_\Gamma 
  \leq a^s(\uu,\uu) \quad\text{for all }\uu\in U.
\end{align*}
Next we show~\eqref{eq:sp:normequiv}. By Lemma~\ref{lem:sp:kerBext} and the triangle inequality we obtain
\begin{align*}
  \norm{\uu}{U,1}^2 \lesssim \norm{\uu-\ext\gamma_0\uu}{U,1}^2 + \norm{\ext\gamma_0\uu}{U,1}^2 \lesssim
  \norm{B\uu}{V'}^2 + \eps^{-1/2}\norm{\gamma_0\uu}{H^{1/2}(\Gamma)}^2.
\end{align*}
For the proof of the upper bound in~\eqref{eq:sp:normequiv} let $\widetilde u\in H^1(\Omega)$ be a function which
attains the minimum in the definition of $\norm{\widehat u^b}{1/2,\cS}$. 
Then,
\begin{align*}
  \eps^{-1/2} \norm{\widehat u^b}{1/2,\cS}^2 = \eps^{-1/2}\big( \norm{\widetilde u}{}^2 + 
  \eps^{1/2}\norm{\nabla \widetilde u}{}^2 \big) \geq \eps^{-1/2} \big( \norm{\widetilde u}{}^2 +
  \eps\norm{\nabla \widetilde u}{}^2 \big).
\end{align*}
Since $\widetilde u|_\Gamma = \widehat u^b|_\Gamma = \widehat u^a|_\Gamma = \gamma_0\uu$, the right-hand side is an
upper bound of $\eps^{-1/2}\norm{\gamma_0\uu}{H^{1/2}(\Gamma)}^2$.
Thus, together with Lemma~\ref{lem:sp:kerBext}, we get
\begin{align*}
  \enorm{\uu}^2 = \norm{B\uu}{V'}^2 + \eps^{-1/2}\norm{\gamma_0\uu}{H^{1/2}(\Gamma)}^2 \lesssim \norm{\uu}{U,2}^2 +
  \eps^{-1/2}\norm{\widehat u^b}{1/2,\cS}^2 \lesssim \norm{\uu}{U,2}^2.
\end{align*}
The remainder of the proof follows the same arguments as in the proof of Theorem~\ref{thm:dpg:main}.

\section{Examples}\label{sec:examples}
In this section we present various numerical examples in two dimensions ($d=2$).
For the first example in \S\ref{sec:examples:smooth} we take a manufactured solution $u\in H^2(\Omega)$.
The standard finite element method with lowest-order discretization on quasi-uniform meshes converges
at a rate $\OO(h)$, where $h$ denotes the diameter of elements in $\TT$.
We observe the same optimal rate for the DPG methods with $\star\in\{0,\nn,s\}$, analyzed in Section~\ref{sec:dpg}.
In \S\ref{sec:examples:Lshape} we consider an L-shaped domain with unknown solution and an expected singularity
at the reentrant corner. Indeed, we will see that a uniform method gives suboptimal convergence whereas an adaptive
method driven by the estimator $\eta$ from \S\ref{sec:aposteriori} recovers the optimal one.
Finally, in \S\ref{sec:examples:RDsmooth} we use a family of manufactured solutions that exhibit
typical boundary layers for the reaction dominated diffusion problem. Our numerical results underline
the robustness of the a posteriori error estimate, as stated by Theorem~\ref{thm:sp:aposteriori}.

\subsection{General setting}
As is usual for DPG methods we replace the infinite dimensional test space $V$ used in the calculation of optimal test
functions~\eqref{eq:dpg:deftttop} by a finite dimensional subspace $V_h$, that is,
we replace the test function $\Theta_\beta\uu_h$ for $\uu_h\in U_h$ by $\Theta_{\beta,h}\uu_h$ defined through 
\begin{align*}
  \ip{\Theta_{\beta,h}\uu_h}{\vv_h}_V = \beta b(\uu_h,\vv_h) \quad\text{for all }\vv_h\in V_h.
\end{align*}
Here we choose
\begin{align*}
  V_h := \begin{cases}
    P^2(\TT)\times[P^2(\TT)]^2 & \text{for the methods from Section~\ref{sec:dpg}}, \\
    P^2(\TT)\times[P^2(\TT)]^2 \times P^4(\TT) & \text{for the method from Section~\ref{sec:sp}}.
  \end{cases}
\end{align*}
These choices are motivated by~\cite{GopalakrishnanQ_14_APD}.
We refer the interested reader to this work for more details.
The resulting DPG scheme is called \emph{practical} DPG method.
For the scaling parameter of the test functions we choose $\beta=2$ for the methods from Section~\ref{sec:dpg} and
$\beta=3$ for the method from Section~\ref{sec:sp}.

We use the standard basis for the lowest order spaces $U_h$, that is, the element characteristic
functions for $P^0(\TT)$, $P^0(\cS)$, and nodal basis functions (hat-functions) for $S^1(\cS)$.
These choices allow for a simple implementation of the inequality constraints in the cones $K_h^\star$.

We solve the discrete variational inequalities~\eqref{eq:dpg:varineqdisc},~\eqref{eq:sp:varineqdisc} with a
\emph{(Primal-Dual) Active Set Algorithm}, see~\cite{HoppeK_94_AMM,KarkkainenKT_03_ALA}.
More precisely, we implemented a modification of~\cite[Algorithm A1]{KarkkainenKT_03_ALA} (which deals with obstacle
problems) to the present problem (here we consider inequality constraints only for degrees of freedom that are
associated to the boundary).

For the problems where the solution is known in analytical form we compute different error quantities depending
on the underlying problem from Section~\ref{sec:dpg} or~\ref{sec:sp}.
\begin{itemize}
  \item Section~\ref{sec:dpg}: Let $\uu=(u,\ssigma,\widehat u,\widehat\sigma)$ denote the exact solution
    of~\eqref{eq:dpg:varineq} and let $\uu_h=(u_h,\ssigma_h,\widehat u_h,\widehat\sigma_h)$ be its approximation.
    We define
    \begin{alignat*}{2}
      \quad\qquad\err(u) &:= \norm{u-u_h}{}, &\quad \err(\ssigma) &:= \norm{\ssigma-\ssigma_h}{}, \\
      \quad\qquad\err(\widehat u) &:= \left(\norm{u-\widetilde u_h}{}^2 + \norm{\nabla(u-\widetilde u_h)}{}^2\right)^{1/2},
      &\quad \err(\widehat\sigma) &:= \left(\norm{\ssigma-\widetilde\ssigma_h}{}^2 +
      \norm{\div(\ssigma-\widetilde\ssigma_h)}{}^2\right)^{1/2}.
    \end{alignat*}
    Here, $\widetilde u_h\in S^1(\TT)$ is the nodal interpolant of $\widehat u_h$ at the nodes of $\TT$.
    Similarly, $\widetilde\ssigma_h$ is the Raviart-Thomas interpolation of $\widehat \sigma_h$.
    Then, it follows by the definition of the trace norms
    \begin{align*}
      \norm{\uu-\uu_h}{U} \leq \left(\err(u)^2 + \err(\ssigma)^2 + \err(\widehat u)^2 +
      \err(\widehat\sigma)^2\right)^{1/2}.
    \end{align*}
  \item Section~\ref{sec:sp}: Let $\uu=(u,\ssigma,\rho,\widehat u^a,\widehat u^b,\widehat\sigma^a,\widehat\sigma^b)$ 
    denote the exact solution
    of~\eqref{eq:sp:varineq} and let 
    $\uu_h=(u_h,\ssigma_h,\rho_h,\widehat u_h^a,\widehat u_h^b,\widehat\sigma_h^a,\widehat\sigma_h^b)$ be its approximation. 
    Define $\err(u)$ and $\err(\ssigma)$ as above and additionally
    \begin{align*}
      \err(\widehat u^\star) 
      &:= \left(\norm{u-\widetilde u_h^\star}{}^2 + \eps^{1/2}\norm{\nabla(u-\widetilde u_h^\star)}{}^2\right)^{1/2}, \\
      \err(\widehat\sigma^\star) &:= 
      \left(\norm{\ssigma-\widetilde\ssigma_h^\star}{}^2 + \eps\norm{\div(\ssigma-\widetilde\ssigma_h^\star)}{}^2\right)^{1/2}, 
      \\
      \err(\rho) &:= \eps^{1/2}\norm{\rho-\rho_h}{},
    \end{align*}
    for $\star\in\{a,b\}$, and $\widetilde u_h^\star$, $\widetilde\sigma_h^\star$ are defined in the same way as above. 
    Our total error estimator is
    \begin{align*}
      \err(\uu) &:= \Big( \err(u)^2 + \err(\ssigma)^2 + \err(\rho)^2 \\
      &\qquad\qquad +\eps^{3/2} \err(\widehat u^a)^2 + \eps \err(\widehat u^b)^2 
      + \eps^{3/2}\err(\widehat\sigma^a)^2+\eps^{5/2}\err(\widehat\sigma^b)^2 \Big)^{1/2}
    \end{align*}
    so that
    \begin{align*}
      \norm{\uu-\uu_h}{U,1} \leq \err(\uu).
    \end{align*}
\end{itemize}
For examples with singularities and/or boundary or interior layers we use a standard adaptive
algorithm that uses $\eta(T)$ and $\eta(E)$ to mark elements by the bulk criterion.
For convenience we define $\eta(\TT)$ and  $\eta(\cS_\Gamma)$ by
\begin{align*}
  \eta(\TT)^2 := \sum_{T\in\TT} \eta(T)^2, \qquad
  \eta(\cS_\Gamma)^2 := \sum_{E\in\cS_\Gamma} \eta(E)^2.
\end{align*}

\subsection{Piecewise smooth solution with boundary layer (Section~\ref{sec:dpg})}\label{sec:examples:smooth}
We consider the domain $\Omega:=(-1,1)\times (0,1)$ and the manufactured solution
\begin{align*}
  u(x,y) := \begin{cases}
    -16x^2(1-x)y(1-y) & x\geq 0, \\
    2(x+1)^3 - 3(x+1)^2 +1 & x<0.
  \end{cases}
\end{align*}
This solution satisfies $u(x,y) = 0$ on the part of $\Gamma=\partial\Omega$ where $x\geq 0$,
and $\partial_{\nn_\Omega}u = 0$ on the part of $\Gamma$ where $x<0$.
We calculate $f:=-\Delta u + u$ and note that $u\in H^1(\Delta,\Omega)$.
Also note that $u(x,y)$ is smooth in both regions, $x>0$ and $x<0$.
Our initial mesh consists of $8$ congruent triangles. 
We solve~\eqref{eq:dpg:varineqdisc} for $\star \in \{0,\nn,s\}$ and plot the errors
$\err(u)$, $\err(\ssigma)$, $\err(\widehat u)$, and $\err(\widehat\sigma)$ for a sequence of uniformly refined
triangulations.
Moreover, in the case $\star=s$ we compare these error quantities with the reliable error estimator $\eta$.
The results are given in Figure~\ref{fig:smooth:traceNormal} for $\star=0$, $\star=\nn$ and Figure~\ref{fig:smooth:sym}
for $\star=s$. 
We observe optimal convergence rates $\OO(h^\alpha) = \OO( (\#\TT)^{-\alpha/2})$ with $\alpha=1$ for the error quantities.
This rate is visualized by a triangle.
In Figure~\ref{fig:smooth:sym} we see that also the estimator $\eta(\TT)$ converges with this rate
whereas $\eta(\cS_\Gamma)$ has a higher convergence rate of approximately $\alpha = 2.8$.

\begin{figure}[htb]
  \begin{center}
    \includegraphics[width=0.49\textwidth]{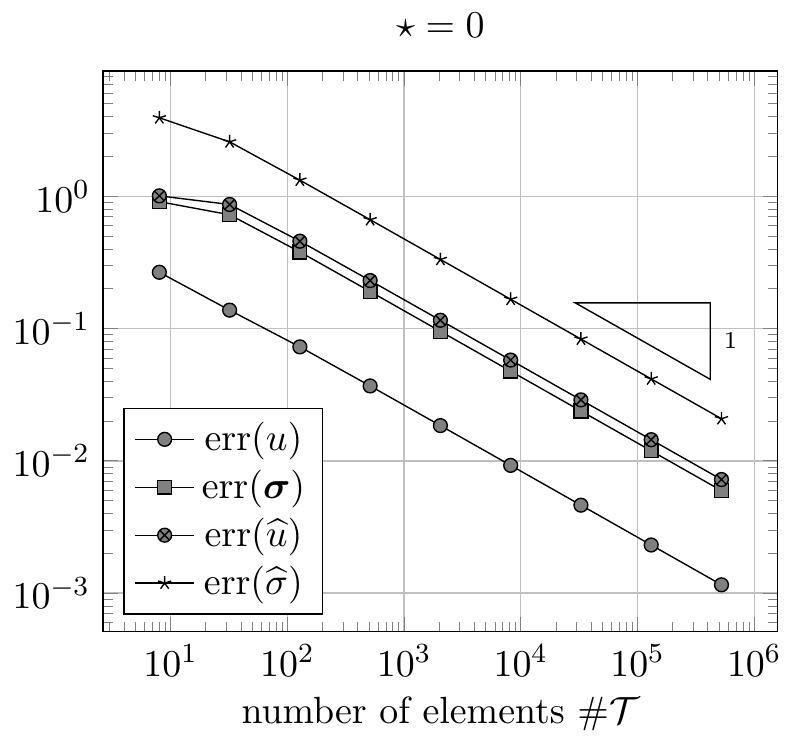}
    \includegraphics[width=0.49\textwidth]{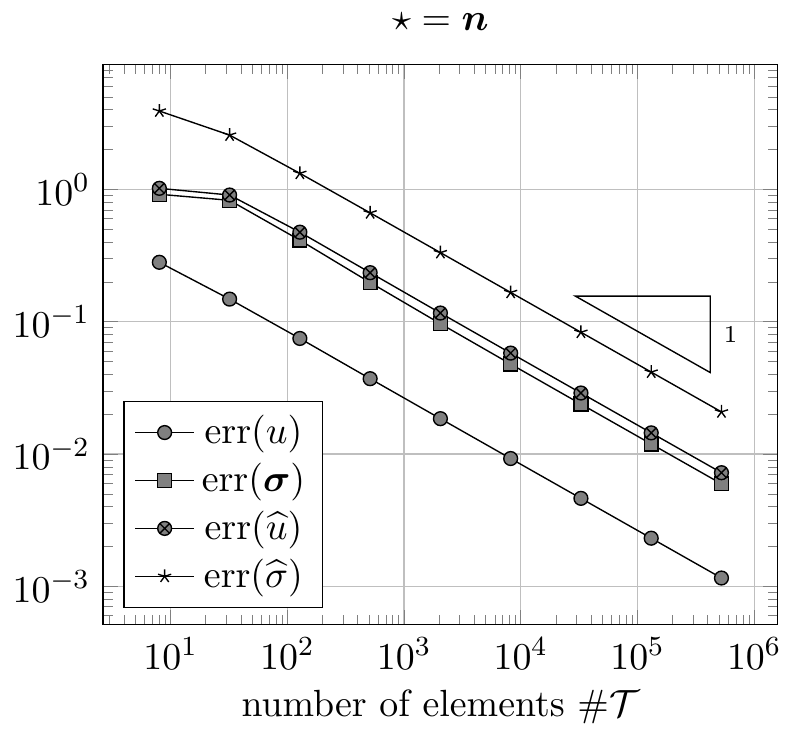}
  \end{center}
  \caption{Error quantities for Example~\ref{sec:examples:smooth} with $\star=0$ (left) and $\star=\nn$ (right).}
  \label{fig:smooth:traceNormal}
\end{figure}

\begin{figure}[htb]
  \begin{center}
    \includegraphics[width=0.65\textwidth]{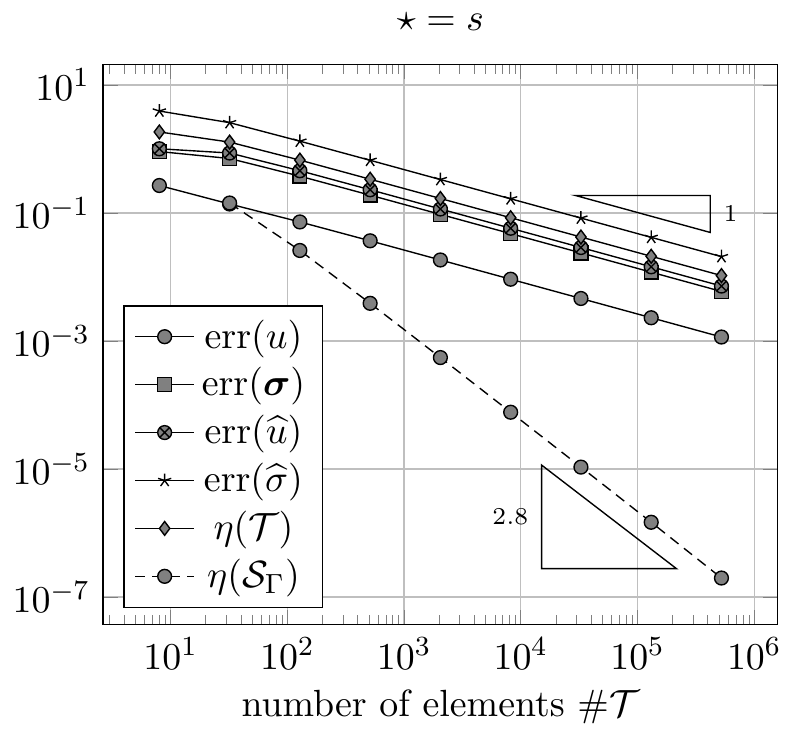}
  \end{center}
  \caption{Error quantities and estimators for Example~\ref{sec:examples:smooth} with $\star=s$.}
  \label{fig:smooth:sym}
\end{figure}

\subsection{Unknown solution (Section~\ref{sec:dpg})}\label{sec:examples:Lshape}
Let $\Omega = (-1,1)^2\setminus [-1,0]^2$ with initial triangulation visualized in Figure~\ref{fig:Lshape}.
We define
\begin{align*}
  f(x,y) := \begin{cases}
    -1 & |(x,y)|\leq 0.8, \\
    \tfrac12 & \text{else}.
  \end{cases}
\end{align*}
For this right-hand side the solution is not known to us in analytical form.
Therefore, we only compute the error estimators. The results are plotted in Figure~\ref{fig:Lshape:results}.
We observe that uniform refinement leads to a reduced order of convergence $\OO( (\#\TT)^{-\alpha/2})$
of approximately $\alpha=0.7$, whereas adaptive refinement regains the optimal order $\alpha=1$.
This is a strong indicator that the unknown solution has a singularity at the reentrant corner which
is what one expects.
Figure~\ref{fig:Lshape:meshes} visualizes meshes at different steps of the adaptive loop and supports this
observation.

\begin{figure}[htb]
  \begin{center}
    \includegraphics[width=0.6\textwidth]{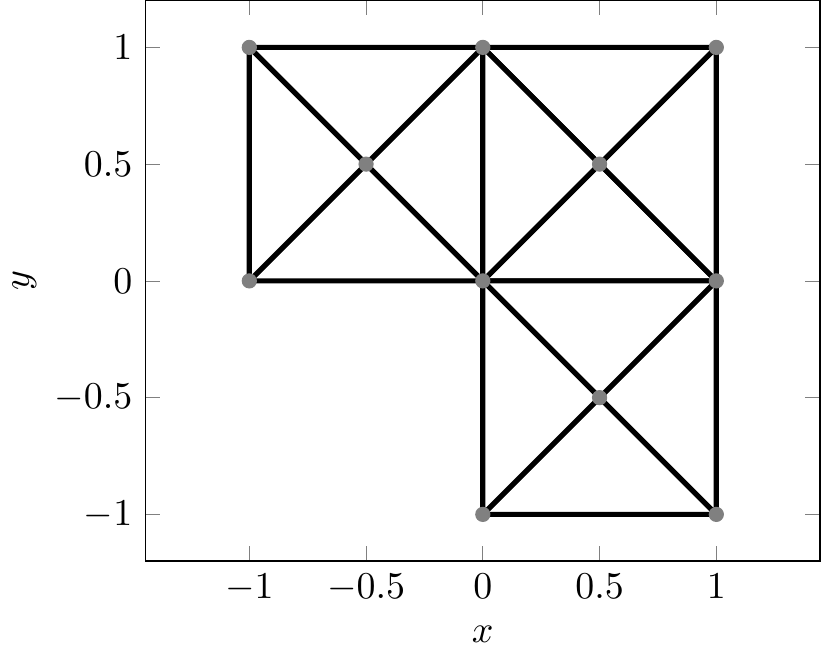}
  \end{center}
  \caption{L-shaped domain with initial triangulation of $12$ elements.}
  \label{fig:Lshape}
\end{figure}

\begin{figure}[htb]
  \begin{center}
    \includegraphics[width=0.65\textwidth]{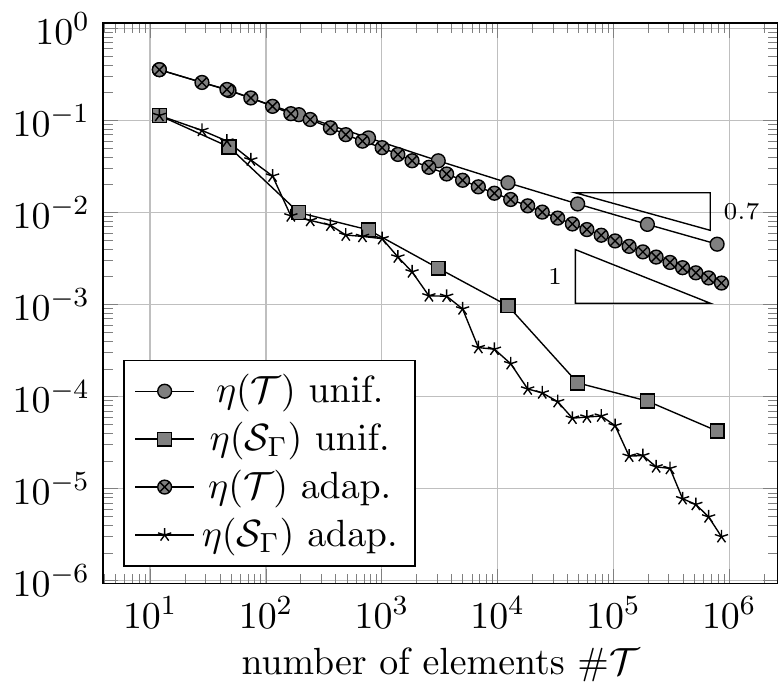}
  \end{center}
  \caption{Estimators for the example from Section~\ref{sec:examples:Lshape} for uniform and adaptive refinement.}
  \label{fig:Lshape:results}
\end{figure}

\begin{figure}[htb]
  \begin{center}
    \includegraphics[width=0.49\textwidth]{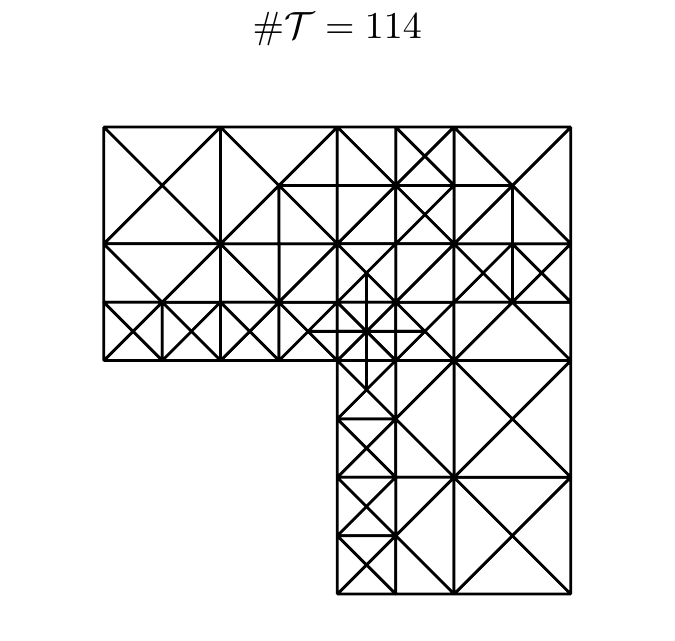}
    \includegraphics[width=0.49\textwidth]{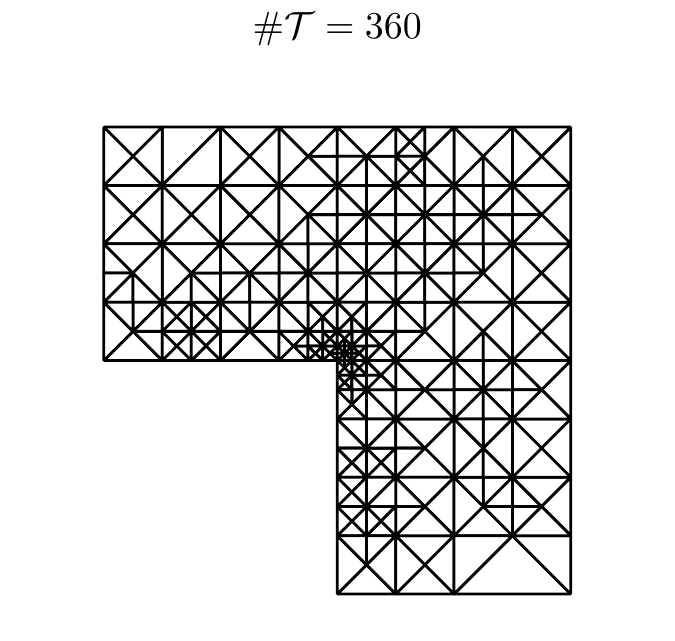}
    \includegraphics[width=0.49\textwidth]{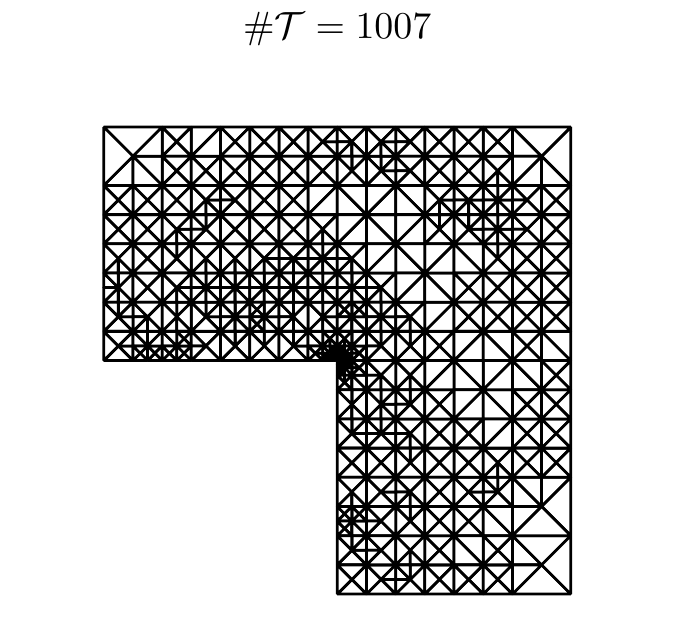}
    \includegraphics[width=0.49\textwidth]{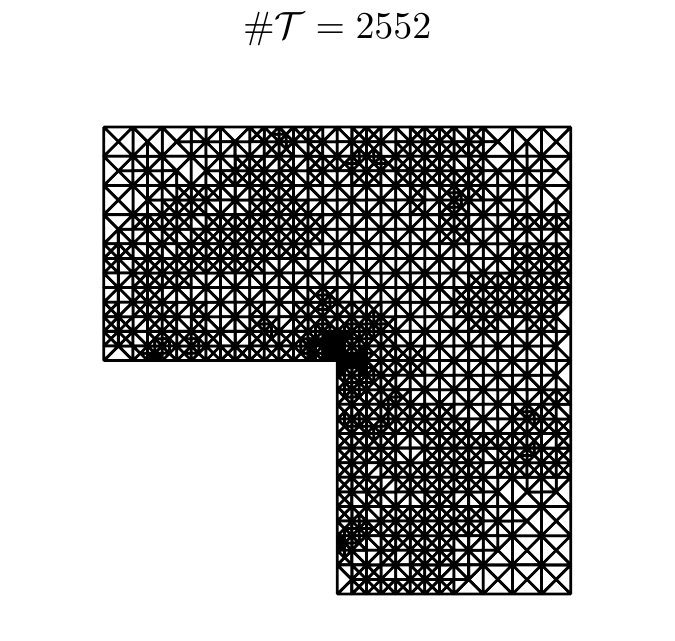}
  \end{center}
  \caption{Meshes at different steps in the adaptive algorithm for the problem from Section~\ref{sec:examples:Lshape}.}
  \label{fig:Lshape:meshes}
\end{figure}

\subsection{Piecewise smooth solution with boundary layer (Section~\ref{sec:sp})}\label{sec:examples:RDsmooth}

Let $\Omega:=(-1,1)\times (0,1)$ with manufactured solution
\begin{align*}
  u(x,y) := \begin{cases}
    -x^2\left(e^{-2(1-x)/\sqrt\eps}\right) 
    \left( e^{-y/\sqrt\eps} + e^{-(1-y)/\sqrt\eps} - e^{-1/\sqrt\eps} - 1\right)
    &x\geq 0, \\
    2(x+1)^3 - 3(x+1)^2 +1 & x<0.
  \end{cases}
\end{align*}
We choose $f=-\eps\Delta u+u$ so that $u$ satisfies~\eqref{eq:intro:sigpde} with $c=\eps$.
In particular, $u$ has a layer of order $\sqrt{\eps}$ at the boundary for $x\geq 0$.
We solve the variational inequality~\eqref{eq:sp:varineqdisc} on a sequence of adaptively refined triangulations for
$\eps\in\{10^{-2},10^{-4},10^{-6},10^{-8}\}$. We start with a coarse initial triangulation that consists of
only $\#\TT_0 = 8$ congruent triangles.

In Figure~\ref{fig:sp:smooth} we compare the estimator $\eta$ with the total error $\err(\uu)$. As in the
works~\cite{HeuerK_RDM,FuehrerH_RCD}, we observe that, after boundary layers have been resolved, our method leads
to an optimal convergence rate $\OO( (\#\TT)^{-\alpha/2})$ ($\alpha=1$).
We also observe that the curves representing $\eta(\TT)$ and $\err(\uu)$ are quite close together, even
in the pre-asymptotic range, and uniformly in $\eps$ (for the selected values). This confirms the robustness
of the a posteriori estimate by Theorem~\ref{thm:sp:aposteriori}. We also
see that the boundary estimator $\eta(\cS_\Gamma)$ is small in comparison to $\eta(\TT)$, 
and has a higher convergence rate.

\begin{figure}[htb]
  \begin{center}
    \includegraphics[width=0.49\textwidth]{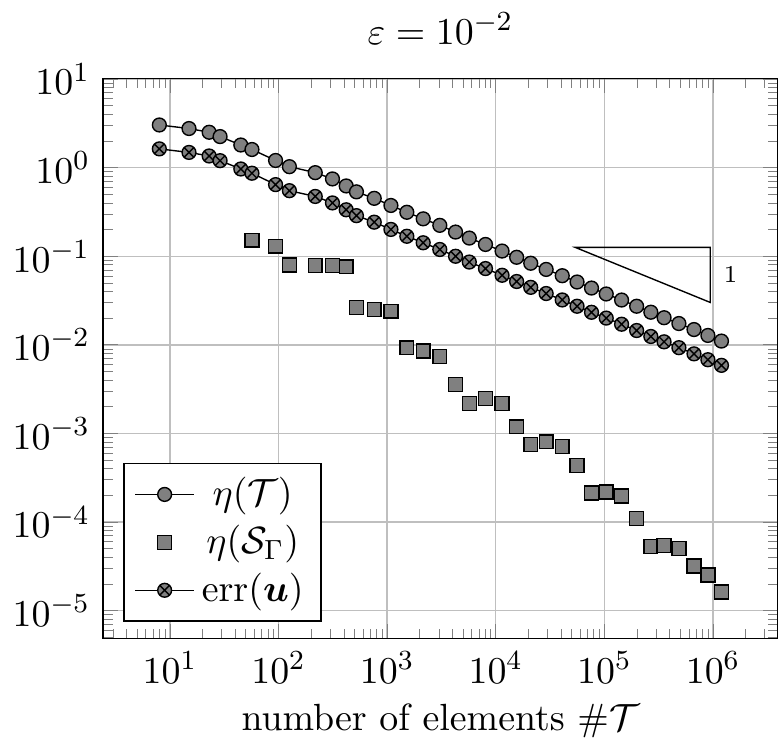}
    \includegraphics[width=0.49\textwidth]{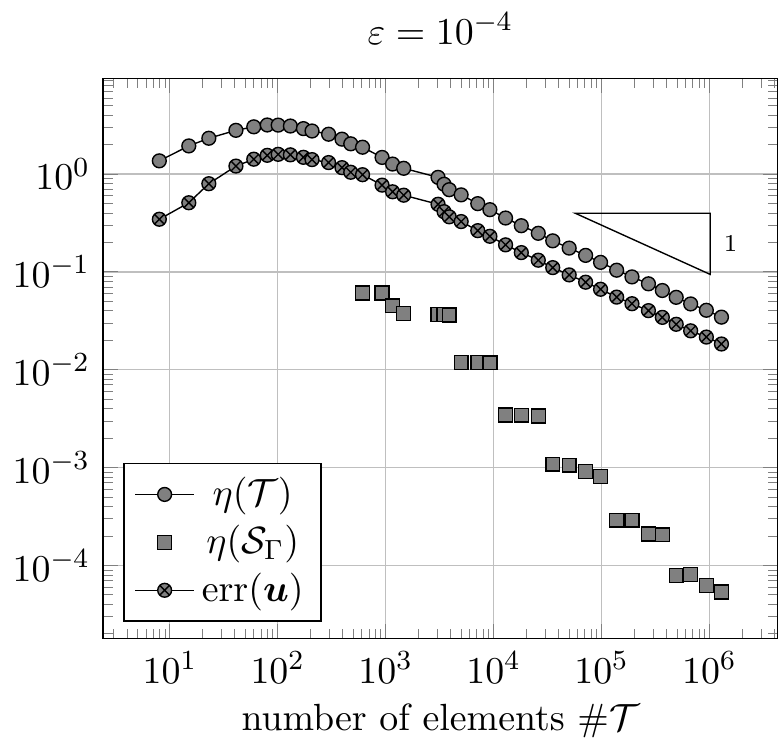}
    \includegraphics[width=0.49\textwidth]{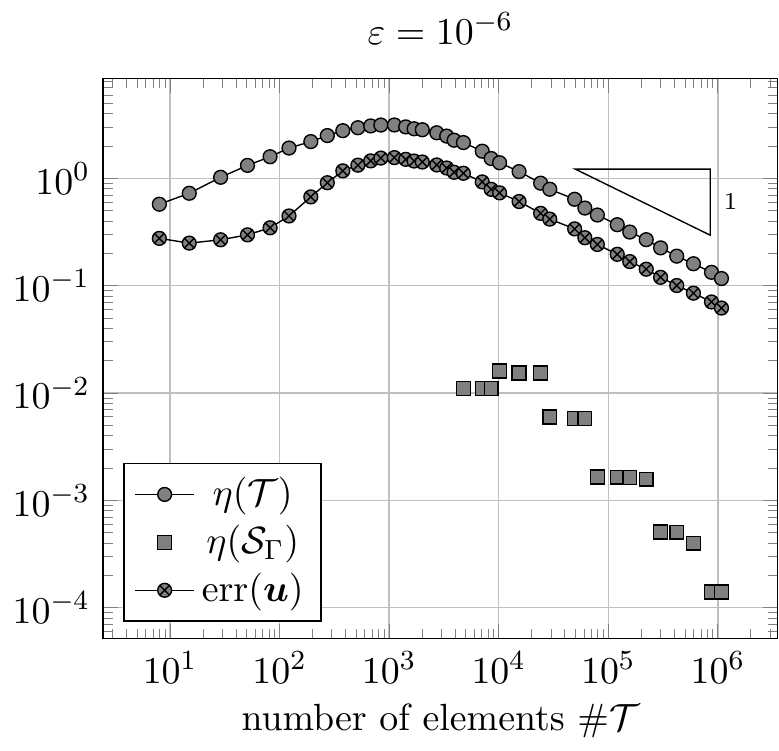}
    \includegraphics[width=0.49\textwidth]{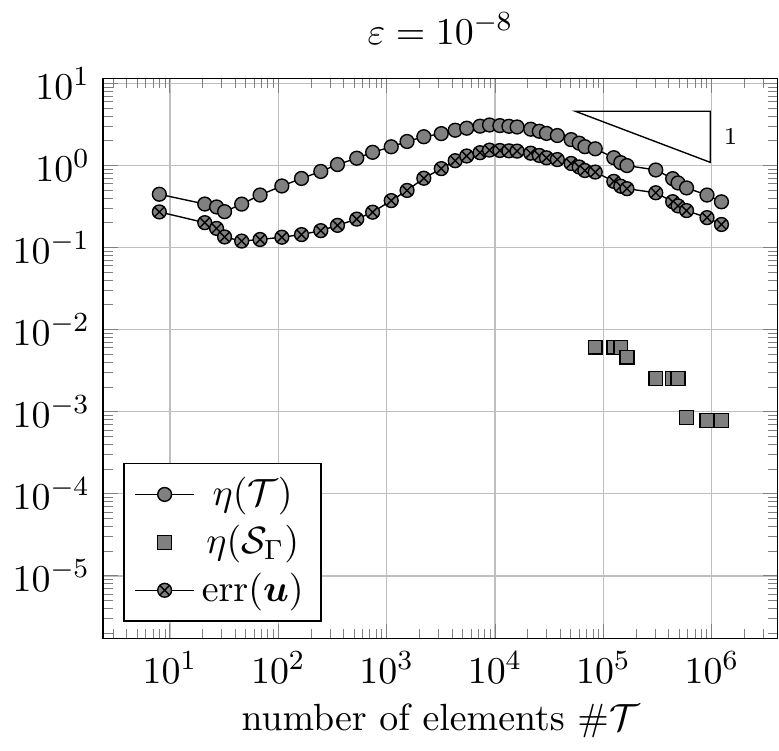}
  \end{center}
  \caption{Comparison of total error $\err(\uu)$ and estimators $\eta(\TT)$, $\eta(\cS_\Gamma)$ for 
    the problem from Section~\ref{sec:examples:RDsmooth}.}
  \label{fig:sp:smooth}
\end{figure}

Our primary interest was to construct a robust method in the sense that
the error in the balanced norm of the field variables $u,\ssigma,\rho$ is controlled uniformly in $\eps$.
The error estimator $\eta$ does precisely this, as stated by Theorem~\ref{thm:sp:aposteriori}
and seen in Figure~\ref{fig:sp:smooth}. To further underline this statement,
Figure~\ref{fig:sp:ratio} shows the ratio $(\err(u)^2+\err(\ssigma)^2+\err(\rho)^2)^{1/2}/\eta(\TT)$. 
We observe that, again after boundary layers have been resolved, this ratio is between $0.5$ and $0.55$
uniformly with respect to $\eps$. In fact, this number is
close to $1/\sqrt{\beta} = 1/\sqrt{3}\sim 0.5774$.
Note that, by the product structure of $V$,
\(
   \norm{B\uu_h-L}{V'}^2 = \sum_{T\in\TT} \norm{B\uu_h-L}{V'(T)}^2,
\)
so that $\eta(\TT)=\sqrt{\beta}\norm{B\uu_h-L}{V'}$, cf.~~\eqref{eq:aposteriori:estdef}.
Our numerical results therefore indicate that the slight overestimation of the error by $\eta$
is due to the choice of $\beta$, and is (asymptotically) uniform in $\eps$.

\begin{figure}[htb]
  \begin{center}
    \includegraphics[width=0.5\textwidth]{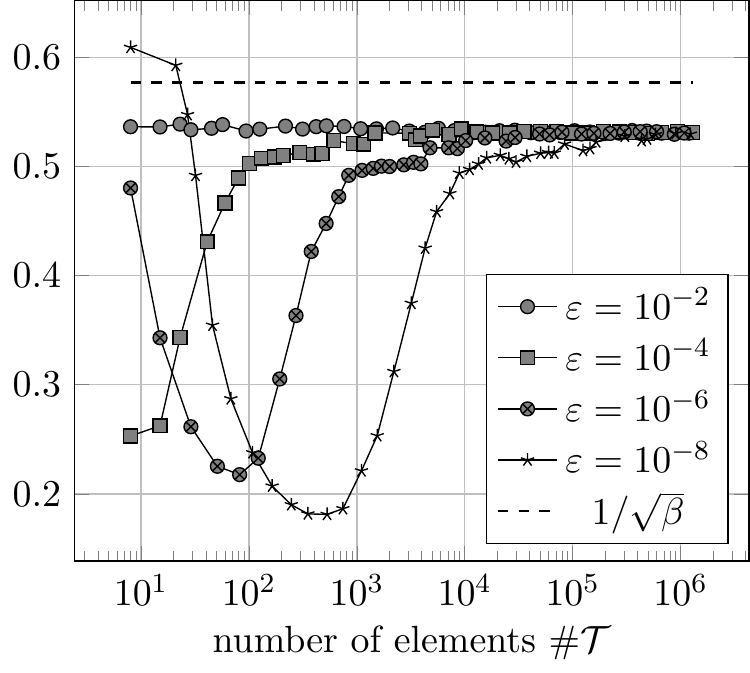}
  \end{center}
  \caption{Ratio $(\err(u)^2+\err(\ssigma)^2+\err(\rho)^2)^{1/2}/\eta(\TT)$ for the problem from
  Section~\ref{sec:examples:RDsmooth}.}
  \label{fig:sp:ratio}
\end{figure}

\bibliographystyle{abbrv}
\bibliography{bib,heuer}
\end{document}